\def\subsection{\@startsection{subsection}{2}%
  \z@{.7\linespacing\@plus.7\linespacing}{.2\linespacing}%
  {\centering\normalfont\scshape}}
\newtheorem{theorem}{Theorem}[section]
\newtheorem{proposition}[theorem]{Proposition}
\newtheorem{corollary}[theorem]{Corollary}
\newtheorem{lemma}[theorem]{Lemma}
\newtheorem{conjecture}[theorem]{Conjecture}
\theoremstyle{definition}
\newtheorem{remark}[theorem]{Remark}
\newtheorem{definition}[theorem]{Definition}
\newtheorem{example}[theorem]{Example}
\newtheorem{remark*}[theorem]{Remark}
\newcommand{\N}{{\mathbb{N}}}
\newcommand{\CC}{{\mathbb{C}}}
\newcommand{\NN}{{\mathbb{N}}}
\newcommand{\R}{{\mathbb{R}}}
\newcommand{\ubeta}{{\underline{\beta}}}
\newcommand{\ulambda}{{\underline{\lambda}}}
\newcommand{\umu}{{\underline{\mu}}}
\newcommand{\usigma}{{\underline{\sigma}}}
\newcommand{\utau}{{\underline{\tau}}}
\newcommand{\abs}[1]{\lvert#1\rvert}
\newcommand{\norm}[1]{\lVert#1\rVert}
\newcommand{\ang}[1]{\langle#1\rangle}
\newcommand{\bigang}[1]{\big\langle#1\big\rangle}
\DeclareMathOperator{\id}{id}
\DeclareMathOperator{\colsp}{colsp}
\DeclareMathOperator{\Par}{Par}
\DeclareMathOperator{\Stab}{Stab}
\DeclareMathOperator{\GL}{GL}
\DeclareMathOperator{\AGL}{AGL}
\newcommand{\ytbox}[3]
{
\draw [thick] (#1,#2) rectangle +(1,1)
 +(0.5,0.1) node [above] {#3};
}
\begin{document}

\title{Transitivity in wreath products with symmetric groups}

\author{Lukas Klawuhn \and Kai-Uwe Schmidt}
\thanks{Funded by the Deutsche Forschungsgemeinschaft (DFG, German Research Foundation) -- Project-ID 491392403 -- TRR 358}
\address{Department of Mathematics, Paderborn University, Warburger Str.\ 100, 33098 Paderborn, Germany.}
\email{klawuhn@math.upb.de}

\date{April 21, 2026}

\subjclass[2020]{05B99, 05E30, 20C99}

\begin{abstract}
It is known that the notion of a transitive subgroup of a permutation group $P$ extends naturally to the subsets of $P$. We study transitive subsets of the wreath product $G \wr S_n$, where $G$ is a finite abelian group. This includes the hyperoctahedral group for $G=C_2$. We give structural characterisations of transitive subsets using the character theory of $G \wr S_n$ and interpret such subsets as designs in the conjugacy class association scheme of $G \wr S_n$. In particular, we prove a generalisation of the Livingstone--Wagner theorem and give explicit constructions of transitive sets. Moreover, we establish connections to orthogonal polynomials, namely the Charlier polynomials, and use them to study codes and designs in $C_r \wr S_n$. Many of our results extend results about the symmetric group $S_n$.
\end{abstract}

\maketitle

\thispagestyle{empty}

%%%%%%%%%%%%%%%%%%%%%%%%%%%%%%%%%%%%%%%%%%%%%%%%%%%%%%%%%%%%%%%%%

\section{Introduction}\label{sec:intro}

The study of groups with a transitive action on an interesting set of objects is a classical and well-investigated topic (see for example \cite{Cam1999}). Two kinds of transitivity that have attracted substantial research are \emph{$t$-transitivity} and \emph{$t$-homogeneity}. A subgroup of the symmetric group $S_n$ is called \emph{$t$-transitive} if it acts transitively on the tuples of $t$ pairwise distinct elements of $\{1,\ldots,n\}$. A subgroup of the symmetric group $S_n$ is called \emph{$t$-homogeneous} if it acts transitively on the subsets of $\{1,\ldots,n\}$ of size $t$. It is immediate that a $t$-transitive subgroup is also $t$-homogeneous and $(t-1)$-transitive. Livingstone and Wagner proved the following famous result.
\begin{theorem}[{\cite[Theorem~2]{LivWag1965}}]\label{thm:livingstone wagner}
Let $G$ be a subgroup of $S_n$ that is $t$-homogeneous for some $t$ with $1 \leq t \leq n/2$. Then $G$ is also $(t-1)$-homogeneous.
\end{theorem}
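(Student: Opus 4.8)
The plan is to reformulate $t$-homogeneity as a statement about fixed subspaces of permutation modules and then exploit a multiplicity-free decomposition of those modules. For $0 \le k \le n$, write $M_k$ for the complex permutation module of $S_n$ whose basis consists of the $k$-subsets of $\{1,\dots,n\}$. First I would record the standard fact that, for any subgroup $G \le S_n$, the number of $G$-orbits on $k$-subsets equals $\dim_\CC (M_k)^G$: by Burnside's orbit-counting lemma this number is $\tfrac{1}{\abs{G}}\sum_{g \in G} \mathrm{fix}_k(g)$, which is the inner product of the trivial character of $G$ with the restriction to $G$ of the permutation character afforded by $M_k$, i.e.\ exactly $\dim_\CC (M_k)^G$. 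Since the all-ones vector is $G$-fixed, $G$ is $k$-homogeneous precisely when $\dim_\CC (M_k)^G = 1$.

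Next I would invoke Young's rule (equivalently, Pieri's rule applied to the permutation character $\mathrm{Ind}_{S_{n-k}\times S_k}^{S_n}\mathbf 1$): for $0 \le k \le n/2$ the module $M_k$ decomposes over $\CC$ as the multiplicity-free direct sum $\bigoplus_{j=0}^{k} S^{(n-j,\,j)}$ of pairwise non-isomorphic irreducible $\CC S_n$-modules. Consequently, for $1 \le t \le n/2$ there is an isomorphism of $\CC S_n$-modules, hence also of $\CC G$-modules,
\[
  M_t \;\cong\; M_{t-1} \,\oplus\, S^{(n-t,\,t)},
\]
and taking $G$-invariants gives $\dim_\CC (M_t)^G = \dim_\CC (M_{t-1})^G + \dim_\CC\big(S^{(n-t,t)}\big)^G$.

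The conclusion is then immediate: if $G$ is $t$-homogeneous with $1 \le t \le n/2$, the left-hand side equals $1$, while $\dim_\CC (M_{t-1})^G \ge 1$ and $\dim_\CC(S^{(n-t,t)})^G \ge 0$, forcing $\dim_\CC (M_{t-1})^G = 1$; by the first paragraph $G$ is $(t-1)$-homogeneous (and, as a by-product, $G$ fixes no nonzero vector of $S^{(n-t,t)}$). The one nontrivial ingredient is the multiplicity-free decomposition of $M_k$ for $k \le n/2$ — this is precisely where the hypothesis $t \le n/2$ enters, and it is the step I expect to be the crux. A self-contained variant avoids character theory entirely: the inclusion matrix $W$ with rows indexed by $(t-1)$-subsets, columns by $t$-subsets, and a $1$ in position $(A,B)$ exactly when $A \subset B$, is $S_n$-equivariant and, by a classical theorem of Gottlieb, has full row rank $\binom{n}{t-1}$ whenever $t \le n/2$; since $W$ commutes with the averaging idempotent $e = \tfrac{1}{\abs{G}}\sum_{g\in G} g$, it maps $(M_t)^G = e(M_t)$ onto $e(M_{t-1}) = (M_{t-1})^G$, so $\dim_\CC (M_{t-1})^G \le \dim_\CC (M_t)^G$ and one finishes as above.
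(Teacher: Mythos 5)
Your argument is correct, but it follows a different route from the paper. The paper does not prove Theorem~\ref{thm:livingstone wagner} directly: it quotes it from Livingstone--Wagner and observes that it is an immediate corollary of the Martin--Sagan result (Theorem~\ref{thm:lambda transitivity Livingstone Wagner}), taking $\lambda=(n-t,t)\unlhd\mu=(n-t+1,t-1)$; that result is in turn recovered as the $G=C_1$ case of the paper's general machinery (Theorems~\ref{thm:characterisation_designs} and~\ref{thm:designs_parabolic}), where transitive \emph{subsets} are characterised by vanishing of the dual distribution at the irreducible constituents of the relevant permutation character, the constituents being controlled by Young's rule and compared via the dominance order. Your proof instead counts $G$-invariants: $t$-homogeneity becomes $\dim_\CC (M_t)^G=1$, and the multiplicity-free decomposition $M_t\cong M_{t-1}\oplus S^{(n-t,t)}$ for $t\le n/2$ (or, in your second variant, Gottlieb's full-rank theorem for the inclusion matrix) forces $\dim_\CC(M_{t-1})^G=1$. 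Both arguments ultimately rest on Young's rule for two-row shapes, and the hypothesis $t\le n/2$ enters at the same point; the trade-off is that your invariant-dimension (Burnside) step genuinely uses that $G$ is a subgroup, so it yields exactly the stated theorem in an elementary way, whereas the paper's dual-distribution formulation is what allows the extension to arbitrary transitive subsets, to general $\lambda$-transitivity, and to wreath products $G\wr S_n$. As written, your sketch is sound; the only ingredients you would need to make precise are the standard facts you cite (Young's rule for $M_k$ with $k\le n/2$, or Gottlieb's theorem, for which the condition $(t-1)+t\le n$ indeed follows from $t\le n/2$).
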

This theorem was generalised by Martin and Sagan \cite{MarSag2007} in multiple ways. Their results include subsets instead of only subgroups and apply to a more general form of transitivity called $\lambda$-transitivity where $\lambda$ is a partition of $n$. Let us briefly explain these notions.

Let $G$ be a group acting on a set $\Omega$. A subset $Y \subseteq G$ is called \emph{transitive} on $\Omega$ if there exists a constant $c>0$ such that for any $a,b \in \Omega$, there are exactly $c$ elements $g \in Y$ with $ga=b$. If $c=1$, then $Y$ is called \emph{sharply transitive}. For every $a \in \Omega$, the integer~$c$ is precisely the number of elements in $Y$ that stabilise $a$, so $ c = |Y \cap \operatorname{Stab}(a)|$ is independent of $a$. If $Y$ is a subgroup, then this definition coincides with the definition of a transitive group action.

One reason to study transitive sets that are not subgroups is their size. It is well-known that for $t\geq 6$, the only $t$-transitive groups are $S_n$ (for $n \geq 6$) and $A_n$ (for $n \geq 8$). However, it is possible to construct $t$-transitive subsets of $S_n$ with $t \geq 6$ that are considerably smaller than $S_n$ or $A_n$. See \cite[Section 6]{MarSag2007} for details.

The results by Martin and Sagan also deal with transitivity on ordered set partitions of $\Omega$. An (integer) \emph{partition} $\lambda$ of a non-negative integer $n$ is a sequence $\lambda = (\lambda_1,\lambda_2,\ldots)$ of non-negative integers with $\lambda_i \geq \lambda_{i+1}$ for every $i \in \N$ and $\sum_{i\geq 1} \lambda_i = n$. For two partitions $\lambda = (\lambda_1,\lambda_2,\ldots)$ and $\mu = (\mu_1,\mu_2,\ldots)$ of $n$, we write $\lambda \unlhd \mu$ if
\[
	\sum_{i=1}^k \lambda_i \leq \sum_{i=1}^k \mu_i \quad \text{ for all } k \geq 1.
\]
An \emph{ordered set partition} $P=(P_1,P_2,\ldots)$ of $\{1,2,\ldots,n\}$ is a tuple of pairwise disjoint subsets $P_i \subseteq \{1,\ldots,n\}$ whose union is $\{1,\ldots,n\}$. For an integer partition $\lambda$, such an ordered set partition is called a \emph{$\lambda$-partition} if $|P_i| = \lambda_i$ for all $i \geq 1$. A subset $Y$ of $S_n$ is called \emph{$\lambda$-transitive} if it is transitive on the set of all $\lambda$-partitions. One of the main theorems in \cite{MarSag2007} is the following.
\begin{theorem}[{\cite[Thm. 3]{MarSag2007}}]\label{thm:lambda transitivity Livingstone Wagner}
Let $\lambda$ and $\mu$ be partitions of $n$ and let $Y$ be a $\lambda$-transitive subset of $S_n$. If $\lambda \unlhd \mu$, then $Y$ is also $\mu$-transitive.
\end{theorem}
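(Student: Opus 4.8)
The plan is to translate $\lambda$-transitivity into the vanishing of certain matrix sums over $Y$ in the irreducible representations of $S_n$, and then to read off the theorem from the transitivity of the dominance order. The starting point is the observation that the set $\Omega_\lambda$ of $\lambda$-partitions is, as an $S_n$-set, the coset space $S_n/S_\lambda$, where $S_\lambda = S_{\lambda_1}\times S_{\lambda_2}\times\cdots$ is the Young subgroup stabilising the ``standard'' $\lambda$-partition; hence the permutation module $\CC[\Omega_\lambda]$ is the Young permutation module $M^\lambda$.

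The key lemma I would establish is a character-free transitivity criterion: \emph{a subset $Y$ of a finite group $G$ is transitive on a transitive $G$-set $\Omega$ if and only if the group-algebra element $\overline{Y} = \sum_{y\in Y}y$ annihilates every non-trivial irreducible constituent of $\CC[\Omega]$}, that is, $\sum_{y\in Y}\rho(y)=0$ for every irreducible representation $\rho\not\cong\mathbf 1$ occurring in $\CC[\Omega]$. Indeed, $Y$ is transitive with constant $c=\abs{Y}/\abs{\Omega}$ exactly when the operator $T_Y=\sum_{y\in Y}\rho_\Omega(y)$ on $\CC[\Omega]$ equals $cJ$, with $J$ the all-ones matrix and $\rho_\Omega$ the permutation representation. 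But $cJ$ acts as multiplication by $\abs{Y}$ on the one-dimensional trivial submodule and as $0$ on its orthogonal complement, which is precisely the sum of the non-trivial constituents, while $\overline{Y}$ itself always acts as multiplication by $\abs{Y}$ on the trivial submodule; hence $T_Y=cJ$ if and only if $\overline{Y}$ kills each non-trivial constituent. (One can also make this quantitative by expanding $\norm{T_Y-cJ}^2$ in the Frobenius norm and using $\norm{T_Y}^2=\sum_{y,y'\in Y}\pi_\Omega(y^{-1}y')=\abs{Y}^2+\sum_{\rho\not\cong\mathbf 1}m_\rho\norm{\sum_{y\in Y}\rho(y)}^2$, where $\pi_\Omega=\sum_\rho m_\rho\chi_\rho$ is the permutation character; transitivity is then the vanishing of every term of this non-negative sum.)

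Applying the criterion to $\Omega=\Omega_\lambda$ and invoking Young's rule --- the irreducible constituents of $M^\lambda$ are exactly the Specht modules $S^\nu$ with $\lambda\unlhd\nu$, the trivial module $S^{(n)}$ occurring once --- yields: $Y$ is $\lambda$-transitive if and only if $\overline{Y}$ annihilates $S^\nu$ for every partition $\nu$ with $\lambda\unlhd\nu$ and $\nu\neq(n)$. Now assume $\lambda\unlhd\mu$ and let $Y$ be $\lambda$-transitive. For any $\nu$ with $\mu\unlhd\nu$ and $\nu\neq(n)$ we have $\lambda\unlhd\mu\unlhd\nu$, hence $\lambda\unlhd\nu$ by transitivity of $\unlhd$, so $\overline{Y}$ annihilates $S^\nu$ by the $\lambda$-transitivity of $Y$. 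The criterion, now applied with $\mu$ in place of $\lambda$, shows that $Y$ is $\mu$-transitive.

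There is no deep obstacle here: once the dictionary between $\lambda$-transitivity, the Young module $M^\lambda$ and Young's rule is in place, the theorem is immediate, since the set of partitions $\nu$ appearing in $M^\lambda$ only shrinks as $\lambda$ moves up in dominance order. The point that demands a little care is the transitivity criterion --- specifically that $cJ$ is the scaled projection onto the trivial isotypic component and that $\overline{Y}$ automatically acts correctly on it --- together with the remark that ``$\overline{Y}$ annihilates $S^\nu$'' depends only on the isomorphism type of $S^\nu$, so it is irrelevant which copy inside $M^\lambda$ one uses. A purely combinatorial route --- reduce to a covering relation of $\unlhd$, where $\mu$ arises from $\lambda$ by moving one box to a higher row, and build a weighted bijection between $\mu$-partitions and $\lambda$-partitions respecting the $Y$-action --- is possible but more cumbersome, and I would not take it.
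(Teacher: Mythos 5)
Your proposal is correct, and it is essentially the argument underlying the paper: your annihilation criterion for $\overline{Y}$ on non-trivial constituents is the representation-theoretic restatement of Theorem~\ref{thm:general_method} (vanishing of the dual distribution, $a'_k \propto \norm{E_k\mathbbm{1}_Y}^2$), Young's rule for $M^\lambda$ is exactly what Theorem~\ref{thm:decomposition_perm_char} reduces to when $G=C_1$, and the conclusion via transitivity of the dominance order is how the paper compares transitivity types ($M_\lambda \supseteq M_\mu$). The paper itself only cites this statement from Martin--Sagan, but its own general framework, specialised to $S_n$, gives precisely your proof.
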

Since $t$-homogeneity is the same as $\lambda$-transitivity for the partition $\lambda = (n-t,t)$, Theorem~\ref{thm:lambda transitivity Livingstone Wagner} can be applied to $t$-homogeneity. Taking $t\in \N$ with $1 \leq t \leq n/2$, $\lambda = (n-t,t)$ and $\mu = (n-t+1,t-1)$, we find that $\lambda \unlhd  \mu$, so Theorem~\ref{thm:livingstone wagner} is an immediate corollary of Theorem~\ref{thm:lambda transitivity Livingstone Wagner}.

In this paper, we extend these results to wreath products with the symmetric group~$S_n$. These products can be interpreted as groups of permutations of the numbers $1$ to $n$ where every number is coloured with one of $r$ different colours for some $r \in \N$. More precisely, the permutations act on the set $\{(c,i) : c \in \{1,\ldots,r\},i \in \{1,\ldots,n\}\}$. The action of the wreath product is imprimitive and for each $i\in[n]$, the sets $\{ (c,i) : c \in \{1,\ldots,r\}\}$ form blocks of imprimitivity. Thus, it is not possible for such a group to be 2-transitive, but there are many other interesting types of transitivity that we define and investigate in this paper. We show that these types of transitivity are meaningful in that they satisfy a version of the Livingstone--Wagner theorem and that they can be characterised algebraically. The class of wreath products with symmetric groups also includes symmetry groups of regular polytopes, which gives rise to a geometric interpretation of transitivity that shall motivate our studies and will serve as a basis for further generalisation.

\subsection{Transitive actions on regular polytopes}

Consider an $n$-dimensional regular polytope, for example a cube in the case $n=3$ (see \cite{Cox1973} for an introduction to regular polytopes). A \emph{flag} is a chain $F_0 \subseteq F_1 \subseteq \ldots \subseteq F_n$ of parts of the polytope, where each $F_i$ is an $i$-dimensional face of the polytope (so $\dim F_i - \dim F_{i-1} = 1$ for every $i$). By definition, the symmetry group of a regular polytope acts transitively on its flags. It is easy to see that for a regular polytope, the symmetry group also acts transitively on the parts of the polytope, for example the set of edges. We are interested in algebraic characterisations of sets that are transitive on substructures of the polytope and how they can be compared to each other.

In order to illustrate this, let us consider the $n$-dimensional cube. Let $k$ be an integer with $0 \leq k \leq n$ and $\sigma = (\sigma_0,\sigma_1,\ldots)$ be a sequence of positive integers that sum to $n-k$ (so $\sigma$ is a \emph{composition} of $n-k$, not necessarily a partition of $n-k$). We call a chain of faces $F_0 \subseteq F_1 \subseteq \ldots$ of the cube a \emph{$(\sigma,k)$-flag} if $\dim F_0 = k$ and $\dim F_i - \dim F_{i-1} = \sigma_i$ for every $i$. A subset $Y$ of the symmetry group of the cube is called \emph{$(\sigma,k)$-transitive} if it is transitive on the set of all $(\sigma,k)$-flags. We shall later see that the order of the parts of $\sigma$ does not matter. That is, a subset~$Y$ is $(\sigma,k)$-transitive if and only if it is $(\widetilde{\sigma},k)$-transitive where $\widetilde{\sigma}$ is a permutation of $\sigma$. Different permutations $\widetilde{\sigma}$ of $\sigma$ result in different geometric interpretations of $(\sigma,k)$-transitivity, but all of them are equivalent. Thus, for simplicity, we can assume that $\sigma$ is a partition of $n-k$.

Examples of some $(\sigma,k)$-flags are given in Figure~\ref{fig:transitivity examples} for the case $n=3$. We omit the brackets and commas in $\sigma$ for better readability. For example, $21$ denotes the partition $(2,1)$, so a $(21,0)$-flag is a $((2,1),0)$-flag.
\begin{figure}[h]
\begin{center}
\begin{tikzpicture}[scale=2]
%specify vertices
\coordinate (v1) at (0,0,0) {};
\coordinate (v2) at (1,0,0) {};
\coordinate (v3) at (1,1,0) {};
\coordinate (v4) at (0,1,0) {};
\coordinate (v5) at (0,0,-1) {};
\coordinate (v6) at (1,0,-1) {};
\coordinate (v7) at (1,1,-1) {};
\coordinate (v8) at (0,1,-1) {};

%shade faces
\draw [fill=blue!90!white,opacity=0.65] (v1) -- (v2) -- (v3) -- (v4) -- cycle;
\draw [fill=blue!90!white,opacity=0.5] (v4) -- (v3) -- (v7) -- (v8) -- cycle;
\draw [fill=blue!90!white,opacity=0.8] (v2) -- (v3) -- (v7) -- (v6) --cycle;
\draw [fill=blue!90!white,opacity=0.4] (v1) -- (v2) -- (v6) -- (v5) --cycle;
\draw [fill=blue!90!white,opacity=0.4] (v1) -- (v5) -- (v8) -- (v4) --cycle;
\draw [fill=blue!90!white,opacity=0.4] (v5) -- (v6) -- (v7) -- (v8) --cycle;

%draw vertices
\draw (v1) [fill=black] circle(2pt);
\draw (v2) [fill=black] circle(2pt);
\draw (v3) [fill=black] circle(2pt);
\draw (v4) [fill=black] circle(2pt);
\draw (v5) [fill=black] circle(2pt);
\draw (v6) [fill=black] circle(2pt);
\draw (v7) [fill=black] circle(2pt);
\draw (v8) [fill=black] circle(2pt);

%draw edges
\draw [very thick] (v1) -- (v2) -- (v3) -- (v4) -- (v8) -- (v7) -- (v6) -- (v2);
\draw [very thick] (v1) -- (v4);
\draw [very thick] (v3) -- (v7);
\draw [very thick, dashed] (v1) -- (v5);
\draw [very thick, dashed] (v5) -- (v6);
\draw [very thick, dashed] (v5) -- (v8);

%mark face for transitivity
\draw [fill=yellow,opacity=0.8] (v1) -- (v2) -- (v3) -- (v4) -- cycle;

%mark vertex for transitivity
\draw (v2) [fill=red] circle(2.5pt);

%label flag
\node at (0.5,-0.5,0) {$(21,0)$};
\end{tikzpicture}
\hspace*{1.5cm}
\begin{tikzpicture}[scale=2]
%specify vertices
\coordinate (v1) at (0,0,0) {};
\coordinate (v2) at (1,0,0) {};
\coordinate (v3) at (1,1,0) {};
\coordinate (v4) at (0,1,0) {};
\coordinate (v5) at (0,0,-1) {};
\coordinate (v6) at (1,0,-1) {};
\coordinate (v7) at (1,1,-1) {};
\coordinate (v8) at (0,1,-1) {};

%shade faces
\draw [fill=blue!90!white,opacity=0.65] (v1) -- (v2) -- (v3) -- (v4) -- cycle;
\draw [fill=blue!90!white,opacity=0.5] (v4) -- (v3) -- (v7) -- (v8) -- cycle;
\draw [fill=blue!90!white,opacity=0.8] (v2) -- (v3) -- (v7) -- (v6) --cycle;
\draw [fill=blue!90!white,opacity=0.4] (v1) -- (v2) -- (v6) -- (v5) --cycle;
\draw [fill=blue!90!white,opacity=0.4] (v1) -- (v5) -- (v8) -- (v4) --cycle;
\draw [fill=blue!90!white,opacity=0.4] (v5) -- (v6) -- (v7) -- (v8) --cycle;

%draw vertices
\draw (v1) [fill=black] circle(2pt);
\draw (v2) [fill=black] circle(2pt);
\draw (v3) [fill=black] circle(2pt);
\draw (v4) [fill=black] circle(2pt);
\draw (v5) [fill=black] circle(2pt);
\draw (v6) [fill=black] circle(2pt);
\draw (v7) [fill=black] circle(2pt);
\draw (v8) [fill=black] circle(2pt);

%draw edges
\draw [very thick] (v1) -- (v2) -- (v3) -- (v4) -- (v8) -- (v7) -- (v6) -- (v2);
\draw [very thick] (v1) -- (v4);
\draw [very thick] (v3) -- (v7);
\draw [very thick, dashed] (v1) -- (v5);
\draw [very thick, dashed] (v5) -- (v6);
\draw [very thick, dashed] (v5) -- (v8);

%mark face for transitivity
\draw [fill=yellow,opacity=0.8] (v1) -- (v2) -- (v3) -- (v4) -- cycle;

%mark edge for transitivity
\draw [fill=orange] (v2)+(-0.03,0) rectangle ++(0.03,1);

%label flag
\node at (0.5,-0.5,0) {$(11,1)$};
\end{tikzpicture}
\hspace*{1.5cm}
\begin{tikzpicture}[scale=2]
%specify vertices
\coordinate (v1) at (0,0,0) {};
\coordinate (v2) at (1,0,0) {};
\coordinate (v3) at (1,1,0) {};
\coordinate (v4) at (0,1,0) {};
\coordinate (v5) at (0,0,-1) {};
\coordinate (v6) at (1,0,-1) {};
\coordinate (v7) at (1,1,-1) {};
\coordinate (v8) at (0,1,-1) {};

%shade faces
\draw [fill=blue!90!white,opacity=0.65] (v1) -- (v2) -- (v3) -- (v4) -- cycle;
\draw [fill=blue!90!white,opacity=0.5] (v4) -- (v3) -- (v7) -- (v8) -- cycle;
\draw [fill=blue!90!white,opacity=0.8] (v2) -- (v3) -- (v7) -- (v6) --cycle;
\draw [fill=blue!90!white,opacity=0.4] (v1) -- (v2) -- (v6) -- (v5) --cycle;
\draw [fill=blue!90!white,opacity=0.4] (v1) -- (v5) -- (v8) -- (v4) --cycle;
\draw [fill=blue!90!white,opacity=0.4] (v5) -- (v6) -- (v7) -- (v8) --cycle;

%draw vertices
\draw (v1) [fill=black] circle(2pt);
\draw (v2) [fill=black] circle(2pt);
\draw (v3) [fill=black] circle(2pt);
\draw (v4) [fill=black] circle(2pt);
\draw (v5) [fill=black] circle(2pt);
\draw (v6) [fill=black] circle(2pt);
\draw (v7) [fill=black] circle(2pt);
\draw (v8) [fill=black] circle(2pt);

%draw edges
\draw [very thick] (v1) -- (v2) -- (v3) -- (v4) -- (v8) -- (v7) -- (v6) -- (v2);
\draw [very thick] (v1) -- (v4);
\draw [very thick] (v3) -- (v7);
\draw [very thick, dashed] (v1) -- (v5);
\draw [very thick, dashed] (v5) -- (v6);
\draw [very thick, dashed] (v5) -- (v8);

%mark edge for transitivity
\draw [fill=orange] (v2)+(-0.03,0) rectangle ++(0.03,1);

%mark vertex for transitivity
\draw (v2) [fill=red] circle(2.5pt);

%label flag
\node at (0.5,-0.5,0) {$(12,0)$};
\end{tikzpicture}
\caption[Some $(\sigma,k)$-flags of the cube]{Some $(\sigma,k)$-flags of the cube}
\label{fig:transitivity examples}
\end{center}
\end{figure}

Consider the group $S_2 \wr S_3$ of order 48. It is the symmetry group of the 3-dimensional cube. Since the cube is a regular polytope, its symmetry group acts transitively on its flags (which are $(111,0)$-flags of the cube). We are interested in whether a subset~$Y$ of $S_2 \wr S_3$ that is $(\sigma,k)$-transitive for some partition $\sigma$ and integer $k$ is also $(\tau,l)$-transitive for some partition $\tau$ and integer $\ell$. For example, for $(2,1)$-transitivity and $(1,2)$-transitivity, this is indeed the case. It follows from our results that if a subset $Y$ of $S_2 \wr S_3$ is transitive on $(2,1)$-flags, then it is also transitive on $(1,2)$-flags. Geometrically, a $(2,1)$-flag is an edge of the cube and a $(1,2)$-flag is a face of the cube. Hence, this means that a subset $Y$ of $S_2 \wr S_3$ that is transitive on the set of the 12 edges of the cube is also transitive on the set of its 6 faces.

We can generalise this observation to the $n$-dimensional cube. The following is a special case of Theorem~\ref{thm:designs_parabolic}, one of our main results, for the $n$-dimensional cube. In the theorem, $\sigma \cup (k)$ denotes the partition obtained by adding a part of size $k$ to $\sigma$. In the case $k=0$, we have $\sigma \cup (k) = \sigma$.
\begin{theorem}\label{thm:main_result_introduction}
Let $n \in \N$ and let $Y$ be a $(\sigma,k)$-transitive subset of $S_2 \wr S_n$, the symmetry group of the $n$-dimensional cube. Then $Y$ is also $(\tau,\ell)$-transitive if $k\leq \ell$ and $(\sigma\cup (k))\unlhd (\tau\cup(\ell))$.
\end{theorem}
Taking $n=3$, $k=1$, $\ell=2$, $\sigma = (2)$ and $\tau = (1)$ in Theorem~\ref{thm:main_result_introduction} gives the statement about edge- and face-transitivity for the 3-dimensional cube from before.

We obtain our results by studying association schemes of the symmetry groups of regular polytopes. This leads us to studying Coxeter groups and wreath products and their representation theory. In what follows, it will be helpful to keep the basic example of the cube in mind when dealing with more general results later.

We organise this paper in the following way: In Section~\ref{sec:transitivity and designs}, we describe the wreath products we are interested in. We also recall some basic facts about association schemes. Moreover, we describe a way to investigate transitive subsets of a group $G$ by studying an association scheme associated with it. This is done using representation theory. Section~\ref{sec:generalised symmetric group} then deals with the representation theory of the groups we want to investigate. In Section~\ref{sec:notions of transitivity}, we combine the results from Sections~\ref{sec:transitivity and designs} and \ref{sec:generalised symmetric group} to show that the transitive sets we are interested in can be characterised as special subsets in the corresponding association scheme. We then use this characterisation in Section~\ref{sec:livingstone wagner} to describe relations between sets that are transitive on different sets of objects and obtain a generalisation of Theorem~\ref{thm:lambda transitivity Livingstone Wagner}. In Section~\ref{sec:polys_and_designs}, we study Charlier polynomials and their connections to transitive sets in wreath products with symmetric groups, and in Section~\ref{sec:constructions}, we describe an explicit construction of transitive sets. Finally, we describe an application of our results to finite projective planes in Section~\ref{sec:further research} and end with some open problems.

\section{Transitivity and association schemes}\label{sec:transitivity and designs}

In this section, we fix some notations and introduce the main definitions of transitive sets and association schemes.
\subsection{Transitive subsets}\label{sec:first_def_of_multiple_transitivity}
In this section, $(G,+)$ is a finite abelian group. The wreath product $W = G\wr S_n$ is a group, namely the semidirect product $W = G^n \rtimes S_n$ where $\sigma \in S_n$ acts on $G^n$ from the left by mapping $(g_1,\ldots,g_n) \in G^n$ to $\sigma g = (g_{\sigma^{-1}(1)},\ldots,g_{\sigma^{-1}(n)})$. We can represent every element of $W$ as a pair $(g,\pi)$ with $g \in G^n$ and $\pi \in S_n$. The product of two elements $(f,\sigma),(g,\pi) \in W$ is given by
\[
	(f,\sigma)(g,\pi) = (f+\sigma g,\sigma\pi).
\]
For more information about wreath products see \cite[Chapter 2.6]{DixMor1996}, for example.

If $G=C_1$ or $G=C_2$, then $W$ is the Coxeter group of type $A_{n-1}$ or $B_n$, respectively. If $G=C_r$ is the cyclic group of order $r$, then $W$ is the complex reflection group $G(r,1,n)$. The group $C_r \wr S_n$ is also called the \emph{generalised symmetric group}. For more information about Coxeter groups and complex reflection groups, we refer the reader to \cite{Hum1990}.

Given an integer $n \in \N$, we write $[n]$ for the set $\{1,\ldots,n\}$. There is an action of $W$ on the set $M = G \times [n]$. For $((g_1,\dots,g_n),\pi)\in W$ and $(c,i) \in G \times [n]$, it is given by
\[
	((g_1,\dots,g_n),\pi) \cdot (c,i) = (c+g_{\pi(i)},\pi(i)).
\]
It is easy to see that the wreath product $W$ consists of all permutations $\sigma$ of the set $M$ such that
\[
\sigma(0,i)=(c,j)\;\Longrightarrow\; \sigma(g,i)=(g+c,j)
\]
for all $g\in G$ and all $i\in [n]$ together with composition of mappings. We can think of the set $M$ as the set of $|G|$ copies of $[n]$ where each $i \in [n]$ appears in $|G|$ different colours.

In this paper, we study the action of $W$ on certain sets $\Omega$. We first collect some basic results about transitive group actions. They are easily found in the literature, see for example \cite{Cam1999} for details.
\begin{lemma}
Let $W$ be a group acting transitively on a set $\Omega$.
\begin{enumerate}[label=\textup{(}\alph*\textup{)}]\label{lem:transitive group action}
\item{
For every $x,y \in \Omega$ we have
\[
	\operatorname{Stab}(x) = w\operatorname{Stab}(y)w^{-1}
\]
for some $w \in W$.
}
\item{
The action of $W$ on $\Omega$ is equivalent to the action of $W$ by left multiplication on the cosets $W / H = \{wH : w \in W\}$ where $H = \operatorname{Stab}(x)$ for some $x \in \Omega$.
}
\item{
Two coset actions on $W/H$ and $W/K$ by left multiplication are isomorphic if and only if $H$ and $K$ are conjugate.
}
\end{enumerate}
\end{lemma}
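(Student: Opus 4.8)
All three statements are standard and follow directly from unwinding the definition of a point stabiliser, so the plan is simply to verify each in turn. For \textbf{(a)}, transitivity lets me choose $w\in W$ with $wy=x$; then $h\in\operatorname{Stab}(y)$ gives $(whw^{-1})x=why=wy=x$, so $w\operatorname{Stab}(y)w^{-1}\subseteq\operatorname{Stab}(x)$, and applying the same reasoning to $w^{-1}$ (which satisfies $w^{-1}x=y$) yields the reverse inclusion. For \textbf{(b)}, put $H=\operatorname{Stab}(x)$ and define $\varphi\colon W/H\to\Omega$ by $\varphi(wH)=wx$. The chain of equivalences $wH=w'H\iff w^{-1}w'\in H\iff w^{-1}w'x=x\iff wx=w'x$ shows $\varphi$ is simultaneously well-defined and injective; surjectivity is exactly transitivity; and $\varphi(g\cdot wH)=gwx=g\cdot\varphi(wH)$ shows it is $W$-equivariant, so it is an isomorphism of $W$-sets.

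For \textbf{(c)}, the key observation is that in the left multiplication action on $W/K$ the stabiliser of a coset $gK$ is precisely $gKg^{-1}$, so in particular $\operatorname{Stab}(K)=K$. If $\psi\colon W/H\to W/K$ is a $W$-equivariant bijection, then for any point $p$ one has $\operatorname{Stab}(\psi(p))=\operatorname{Stab}(p)$; taking $p=H$ and writing $\psi(H)=gK$ gives $H=\operatorname{Stab}(H)=\operatorname{Stab}(gK)=gKg^{-1}$, so $H$ and $K$ are conjugate. Conversely, if $K=wHw^{-1}$, the map $gH\mapsto gw^{-1}K$ is well-defined and injective because $(gw^{-1})^{-1}(g'w^{-1})=w(g^{-1}g')w^{-1}\in wHw^{-1}=K$ holds exactly when $g^{-1}g'\in H$, it is surjective with inverse $gK\mapsto gwH$, and it is visibly $W$-equivariant; hence the two coset actions are isomorphic.

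There is no real obstacle here: the whole lemma is routine bookkeeping. The only points that need a little care are keeping the left cosets and the direction of conjugation consistent throughout part (c), and remembering that the equivalences in (b) and (c) must be realised by $W$-equivariant bijections, not merely bijections of the underlying sets.
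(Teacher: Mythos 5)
Your proof is correct and complete; all three parts are handled by the standard arguments, including the key observations that the stabiliser of the coset $gK$ under left multiplication is $gKg^{-1}$ and that a $W$-equivariant bijection preserves stabilisers. The paper itself gives no proof of this lemma (it cites the literature, e.g.\ Cameron), and your write-up is exactly the standard textbook argument, so there is nothing to reconcile.
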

This lemma allows us to investigate transitivity solely within the group $W$ itself. Every subgroup $H$ of $W$ gives rise to a transitive action of $W$ on the cosets $W/H$ and every transitive action arises this way. We can think of each subgroup $H$ as a certain type of transitivity.

We aim to investigate as many different transitivity types in the group $W$ as possible (that is investigate as many subgroups $H$ in Lemma \ref{lem:transitive group action} as possible). In order to illustrate this point of view, let us look at $t$-transitivity. The action of $W$ on $M$ is imprimitive, so no subset of $W$ under this action is ever $t$-transitive for $t \geq 2$. However, we can consider a different definition of $t$-transitivity, namely transitivity on $t$-tuples where each element is from a different block. Let
\[
	\Omega = \left\{ \big((g_1,a_1),\ldots,(g_t,a_t)\big) \in M^t : g_i \in G,\,a_i \in [n] \text{ with } a_i \neq a_j \text{ for } i \neq j \right\}
\]
be the set of all $t$-tuples of elements of $M$ where the elements of $[n]$ are pairwise distinct. Then the stabiliser of an element $x \in \Omega$ is the subgroup
\[
\text{Stab}(x)=G \wr S_A \;\cong\; G \wr S_{n-t} = G^{n-t} \rtimes S_{n-t},
\]
where $A = [n] \, \setminus \, \{a_1,\ldots,a_t\}$. Thus, in view of Lemma~\ref{lem:transitive group action}, a subset $Y$ of $W$ is transitive on $\Omega$ if and only if it is transitive on the cosets $W/H$ where $H = G \wr S_{n-t}$.

\subsection{Characters}

Since there is a close connection between association schemes of groups and their representations, we now turn to representation and character theory. We refer the reader to \cite{Ser1977} for a more extensive introduction to the representation theory of finite groups.

Let $G$ be a finite group. A \emph{representation} of $G$ is a homomorphism $M:G \to \GL_n(\CC)$ where $\GL_n(\CC)$ denotes the group of invertible $(n \times n)$-matrices over $\CC$. Each representation $M$ has a \emph{character} $\chi : G \to \CC$ which is given by $\chi(g) = \operatorname{tr} M(g)$ where $\operatorname{tr}$ denotes the trace function. The \emph{degree} $\deg(\chi)$ of a character~$\chi$ is the dimension of the vector space of the underlying representation. It is also given by $\deg(\chi) = \chi(e) = \operatorname{tr} I_n = n$ where $e$ is the identity of $G$ and $I_n$ is the identity matrix. Every representation can be decomposed into so called \emph{irreducible} representations, and hence every character can be expressed as a sum of irreducible characters.

Functions from $G$ to $\CC$ that are constant on conjugacy classes are called \emph{class functions}. Since all characters are class functions, we can write $\chi(C)$ for the value of the character $\chi$ on the conjugacy class $C$ of $G$. The class functions form a vector subspace of the space $\CC^G$ of all functions from $G$ to $\CC$. It has an inner product given by
\[
	\langle \chi, \psi \rangle = \frac{1}{|G|}\sum_{g \in G} \chi(g)\overline{\psi(g)}
\]
and the irreducible characters form an orthonormal basis of the space of class functions with respect to $\langle \cdot,\cdot \rangle$. In particular, the number of irreducible characters is equal to the number of conjugacy classes of $G$. Furthermore, if $\varphi$ is an arbitrary character and $\chi$ is an irreducible character, then $\langle \varphi,\chi \rangle$ is the \emph{multiplicity} of $\chi$ in the decomposition of $\varphi$ into irreducible characters. If $\langle \varphi,\chi \rangle \neq 0$, then $\chi$ is called an \emph{irreducible constituent} of $\varphi$.  One of our main tasks is identifiying the irreducible constituents of certain permutation characters.

If $G$ acts on a set $\Omega$, then the function $\xi_\Omega: G \to \CC$ with $\xi_\Omega(g) = |\operatorname{Fix}(g)|$ is called the \emph{permutation character of $G$ on $\Omega$}. If $H \subseteq G$ is a subgroup, then $G$ acts transitively on the set of left cosets $G/H$ via left multiplication and the corresponding permutation character $\xi_H$ is called the \emph{permutation character of $H$}. By Lemma~\ref{lem:transitive group action}, the permutation character of a transitive action of $G$ on $\Omega$ equals the permutation character $\xi_H$ where $H$ is the stabiliser of some element $\omega \in \Omega$.

Let $H \subseteq G$ be a subgroup. If $\chi$ is a character of $G$, then restricting $\chi$ to $H$ gives a character of $H$. This character is called the \emph{restriction} of $\chi$ to $H$ and denoted by $\chi \downarrow^G_H$. There is a way to extend a character of a subgroup $H$ to a character of $G$ called \emph{induction}. If $\chi$ is a character of $H$, then the induced character is denoted by $\chi \uparrow^G_H$.

The function $1_G$ with $1_G(g) = 1$ for all $g \in G$ is a character of $G$, called the \emph{trivial character} of $G$. If we restrict $1_G$ to a subgroup $H \subseteq G$, then we get the trivial character of $H$, so $1_G \downarrow^G_H = 1_H$. It is well-known that the induction of the trivial character $1_H$ of $H$ to $G$ equals the permutation character of $H$, that is
\[
	1_H \uparrow^G_H = \xi_H.
\]

In order to prove our results about permutation characters, we need Mackey's formula. It describes the result of inducing a character followed by restriction. Let $H,K \subseteq G$ be subgroups. If $\chi$ is a character of $H$ and $s \in G$, we define a character $\chi^s$ of $H^s = sHs^{-1}$ via
\[
	\chi^s(x) = \chi(s^{-1}xs)
\]
for every $x \in H^s$. We have the following theorem (see for example \cite[Section 7.3]{Ser1977}).
\begin{theorem}[Mackey]\label{thm:mackey}
With the notation as above, we have
\[
	\chi \uparrow^G_H \downarrow^G_K = \sum_{s \in R} \chi^s \downarrow^{H^s}_{H^s \cap K} \uparrow^K_{H^s \cap K} 
\]
where $R$ is a system of representatives for the double cosets $KgH$ with $g \in G$.
\end{theorem}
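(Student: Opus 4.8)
The plan is to prove the identity at the level of modules and to pass to characters only at the very end. First I would fix a $\CC H$-module $V$ affording $\chi$ and realise the induced module as $\CC G\otimes_{\CC H}V$, viewing $\CC G$ as a $(\CC K,\CC H)$-bimodule so that the left $K$-action turns $\CC G\otimes_{\CC H}V$ into a realisation of $\chi\uparrow^G_H\downarrow^G_K$. Since $G$ is the disjoint union of the double cosets $KsH$ over $s\in R$, and each $\CC[KsH]$ is a sub-$(\CC K,\CC H)$-bimodule of $\CC G$, one gets $\CC G=\bigoplus_{s\in R}\CC[KsH]$ and hence a decomposition of $\CC K$-modules
\[
	\chi\uparrow^G_H\downarrow^G_K \;\cong\; \bigoplus_{s\in R}\bigl(\CC[KsH]\otimes_{\CC H}V\bigr).
\]
It then suffices to identify the $s$-th summand with the module affording $\chi^s\downarrow^{H^s}_{H^s\cap K}\uparrow^K_{H^s\cap K}$.

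Fixing $s\in R$, the next step is the orbit–stabiliser analysis of $K$ acting on $KsH/H$: the map $k\mapsto ksH$ has image a single $K$-orbit and fibres equal to the left cosets of $\operatorname{Stab}_K(sH)=\{k\in K:s^{-1}ks\in H\}=H^s\cap K$, so a transversal $k_1,\dots,k_m$ of $K/(H^s\cap K)$ gives $KsH=\bigsqcup_j k_j sH$ as a union of left $H$-cosets, whence $\CC[KsH]\otimes_{\CC H}V=\bigoplus_j k_j s\otimes V$. Setting $W:=s\otimes V$, for $x\in H^s\cap K$ and $v\in V$ one computes $x\cdot(s\otimes v)=xs\otimes v=s(s^{-1}xs)\otimes v=s\otimes(s^{-1}xs)v$ with $s^{-1}xs\in H$, which shows both that $W$ is $(H^s\cap K)$-stable and that it affords exactly $\chi^s\downarrow^{H^s}_{H^s\cap K}$. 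Since $K$ permutes the summands $k_j\cdot W$ as it permutes $K/(H^s\cap K)$, the summand $\CC[KsH]\otimes_{\CC H}V$ is the induced module $W\uparrow^K_{H^s\cap K}$, and taking traces gives the claimed equality.

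I expect the only real obstacle to be bookkeeping, not depth: one must keep the conjugation twists on the correct side (so that $\chi^s$ and $H^s\cap K$, not their $s^{-1}$-variants, appear) and verify that $\CC[KsH]\otimes_{\CC H}V$ is genuinely the module \emph{induced} from $W$ rather than merely a $K$-stable space containing it — which is precisely what the fibre identification $\operatorname{Stab}_K(sH)=H^s\cap K$ secures. An alternative route avoiding modules is to start from the Frobenius character formula $(\chi\uparrow^G_H)(g)=\tfrac{1}{|H|}\sum_{x\in G:\,x^{-1}gx\in H}\chi(x^{-1}gx)$, restrict to $g=k\in K$, split the summation over $x$ according to the double coset $KsH$ it lies in, and parametrise $x=k'sh$ with $k'\in K$ and $h\in H$; the $h$-sum is free, and the resulting cancellations and multiplicities reassemble $\sum_{s\in R}\bigl(\chi^s\downarrow^{H^s}_{H^s\cap K}\uparrow^K_{H^s\cap K}\bigr)(k)$ via the same Frobenius formula applied to $K\ge H^s\cap K$. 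Either way the essential care is the double-coset combinatorics and the variance of the twist.
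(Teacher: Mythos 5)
Your proof is correct: the bimodule decomposition $\CC G=\bigoplus_{s\in R}\CC[KsH]$, the identification of the left $H$-cosets in $KsH$ with $K/(H^s\cap K)$ via $\operatorname{Stab}_K(sH)=H^s\cap K$, and the computation $x\cdot(s\otimes v)=s\otimes(s^{-1}xs)v$ showing that $s\otimes V$ affords $\chi^s\downarrow^{H^s}_{H^s\cap K}$ are exactly the standard argument, with the conjugation twist on the correct side for the paper's convention $\chi^s(x)=\chi(s^{-1}xs)$. The paper itself offers no proof of this classical theorem, citing Serre instead, and your argument is essentially the one found in that reference, so there is nothing to add.
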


\subsection{The conjugacy class association scheme}
We will mainly study the combinatorics of $G \wr S_n$ from the viewpoint of association schemes. We refer the reader to \cite{BanIto1984} for an extensive introduction to association schemes. Every finite group gives rise to an association scheme (see \cite[Section 2.7]{BanIto1984} for details), called the \emph{conjugacy class scheme} of the group, but the theory of association schemes is much more general than that. We collect the relevant details about conjugacy class schemes here. In view of Section \ref{sec:general method}, we will first describe conjugacy class schemes in general and later specialise to the group $G \wr S_n$.

For finite and non-empty sets $X$ and $Y$, let $\CC(X,Y)$ denote the set of all complex $|X| \times |Y|$-matrices with rows indexed by $X$ and columns indexed by $Y$. For a matrix $A \in \CC(X,Y)$ and $x\in X$, $y \in Y$, the $(x,y)$-entry of $A$ is denoted by $A(x,y)$. If $|Y|=1$, then we omit $Y$, so $\CC(X)$ is the set of complex column vectors indexed by $X$.

Let $G$ be a finite group and let $C_0 = \{e\},C_1,\ldots,C_n$ denote its conjugacy classes. For $i \in \{0,\ldots,n\}$, let $A_i \in \CC(G,G)$ be given by
\begin{equation}\label{eqn:def_incidence_matrices_general}
	A_i(x,y) = \begin{cases} 1,& \text{ if } x^{-1}y \in C_i\\
									 0,& \text{ otherwise}.
					  \end{cases}
\end{equation}
Let $\mathbb{A} = \operatorname{sp}(A_0,\ldots,A_n)$ be the vector space generated by $A_0,\ldots,A_n$ over the complex numbers. Then $\mathbb{A}$ is a commutative matrix algebra that contains the identity matrix and is closed under conjugate transposition. Thus, the zero-one-matrices $A_i$ define an association scheme, called the \emph{conjugacy class scheme} of $G$. The algebra $\mathbb{A}$ is called the \emph{Bose--Mesner algebra} of this association scheme.

Since $\mathbb{A}$ is commutative and closed under taking the conjugate transpose, all of its elements can be simultaneously diagonalised, and therefore there exists a basis $E_0,\ldots,E_n$ of $\mathbb{A}$ consisting of Hermitian matrices with the property
\begin{equation}\label{eqn:minimal_idempotent_property_general}
	E_k E_l = \delta_{kl} E_k.
\end{equation}
The matrices $E_k$ are the minimal idempotents of $\mathbb{A}$. These matrices are given by (see \cite[Theorem II.7.2]{BanIto1984})
\[
	E_k = \frac{\deg(\chi_k)}{|G|} \sum_{l=0}^n \chi_k (C_l) A_l.
\]
Using (\ref{eqn:def_incidence_matrices_general}), the entries of $E_k$ are given by
\begin{equation}\label{eqn:entries_idempotents_general}
	E_k (x,y) = \frac{\deg(\chi_k)}{|G|} \chi_{k} ( x^{-1}y).
\end{equation}
Let $V_k$ be the column space of $E_k$. The spaces $V_k$ are called the \emph{eigenspaces} of the association scheme because they are the common eigenspaces of the matrices $A_i$. From~(\ref{eqn:minimal_idempotent_property_general}) we obtain that these vector spaces are pairwise orthogonal and that
\begin{equation}\label{eqn:direct_sum_eigenspaces}
	\CC(G) = \bigoplus_{k = 0}^n V_k.
\end{equation}
Let $Y$ be a non-empty subset of $G$. We can associate two sequences of numbers with~$Y$: The \emph{inner distribution} $a$ and the \emph{dual distribution} $a'$. The inner distribution of $Y$ is the tuple $(a_0,\ldots,a_n)$, where
\begin{equation}\label{eqn:def_inner_distribution_general}
	a_i = \frac{1}{|Y|}\sum_{x,y \in Y} A_i(x,y),
\end{equation}
and the dual distribution of $Y$ is the tuple $(a'_0,\ldots,a'_n)$, where
\begin{equation}\label{eqn:def_dual_distribution_general}
	a'_k = \frac{|G|}{|Y|}\sum_{x,y \in Y} E_k(x,y).
\end{equation}
We can use (\ref{eqn:entries_idempotents_general}) to obtain a formula for the dual distribution $a'$ in terms of characters:
\begin{equation}\label{eqn:dual_distribution_characters_general}
	a'_k = \frac{\deg(\chi_k)}{|Y|}\sum_{x,y \in Y} \chi_k(x^{-1}y)
\end{equation}
It is immediate that the inner distribution is non-negative. The same holds for the dual distribution. Let $\mathbbm{1}_Y \in \CC(G)$ be the characteristic vector of $Y$, so $\mathbbm{1}_Y(x) = 1$ if $x \in Y$ and $\mathbbm{1}_Y(x) = 0$ otherwise. Since the matrix $E_k$ is hermitian, it follows from (\ref{eqn:dual_distribution_characters_general}) that
\begin{equation}\label{eqn:dual_distr_orthogonality_general}
	\frac{|Y|}{|G|} a'_k = \mathbbm{1}_Y^\top E_k \mathbbm{1}_Y = \mathbbm{1}_Y^* E_k^* E_k \mathbbm{1}_Y = \norm{E_k \mathbbm{1}_Y}^2.
\end{equation}
Thus, the dual distribution is real and non-negative. Furthermore, the case $a'_k = 0$ occurs if and only if $\mathbbm{1}_Y$ is orthogonal to $V_k$.

The main reason why the theory of association schemes is such a powerful tool in combinatorics, is the observation that interesting combinatorial structures can often be characterised using the inner or dual distribution. Delsarte \cite{Del1973} calls these objects \emph{cliques} and \emph{designs}, respectively.
\begin{definition}
Let $Y$ be a non-empty subset of $G$ with inner distribution $(a_0,\ldots,a_n)$ and dual distribution $(a'_0,\ldots,a'_n)$.
\begin{enumerate}[label=\textup{(}\alph*\textup{)}]
\item{
Let $D \subseteq [n]$. We call $Y$ a \emph{$D$-clique} or a \emph{$D$-code} if $a_d = 0$ for every $d \not\in D$.
}
\item{
Let $T \subseteq [n]$. We call $Y$ a \emph{$T$-design} if $a'_t = 0$ for every $t \in T$.
}
\end{enumerate}
\end{definition}
We show in Section~\ref{sec:notions of transitivity} that the structures we are interested in are in fact $T$-designs in the sense of Delsarte.

\subsection{The general method}\label{sec:general method}

We now describe the method we use to connect transitive subsets to designs in an association scheme. The method can be applied to any finite group $G$ acting on a set~$\Omega$. The result is essentially that if the decomposition of the permutation character on~$\Omega$ is known, then this decomposition gives rise to a characterisation of transitivity on~$\Omega$ as a $T$-design in an association scheme.

Let $G$ be a finite group acting on a set $\Omega$. We define the \emph{incidence matrix} of this action to be the matrix $M\in\CC(G,\Omega\times\Omega)$ given by
\[
M(g,(a,b))=\begin{cases}
1 & \text{if $ga=b$},\\
0 & \text{otherwise}.
\end{cases}
\]
Denote by $\colsp(M)$ the vector space generated by the columns of $M$. The following theorem is crucial.
\begin{theorem}
\label{thm:colsp_incidence_matrix}
Let $G$ be a finite group acting on a set $\Omega$ with incidence matrix~$M$. Let $\chi_0,\chi_1,\dots,\chi_n$ be the irreducible characters of $G$ and let $V_0,V_1,\dots,V_n$ be the corresponding eigenspaces of the conjugacy class association scheme of $G$. Define $I \subseteq \{0,1,\dots,n\}$ such that $k \in I$ if and only if $\chi_k$ is an irreducible constituent of the permutation character $\xi$ of $G$ on $\Omega$. Then 
\[
\colsp(M)=\bigoplus_{k\in I}V_k.
\]
\end{theorem}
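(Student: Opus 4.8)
The plan is to pass from $M$ to the Hermitian matrix $MM^*$, recognise it as a nonnegative combination of the idempotents $E_k$, and then read off the column space.

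First I would record the standard linear-algebra fact that $\colsp(M)=\colsp(MM^*)$ for any complex matrix $M$: one inclusion is trivial, and for the other, if $u\perp\colsp(MM^*)$ then $\norm{M^*u}^2=u^*MM^*u=0$, so $u\perp\colsp(M)$; taking orthogonal complements gives the claim. This reduces the theorem to computing $\colsp(MM^*)$, and $MM^*\in\CC(G,G)$ is a far more tractable object than $M$ itself.

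The main (and essentially only) computation is the entrywise evaluation of $MM^*$. Since $M$ is a real $0$–$1$ matrix, for $x,z\in G$ one gets
\[
(MM^*)(x,z)=\sum_{(a,b)\in\Omega\times\Omega}M(x,(a,b))\,M(z,(a,b))=\#\{a\in\Omega:xa=za\}=\abs{\operatorname{Fix}(x^{-1}z)}=\xi(x^{-1}z),
\]
where in the last step I use that $\operatorname{Fix}(g)=\operatorname{Fix}(g^{-1})$ and that $\xi$ is a class function. Decomposing $\xi=\sum_{k=0}^{n}m_k\chi_k$ with $m_k=\langle\xi,\chi_k\rangle\in\ZZ_{\geq 0}$ and comparing with the entry formula $E_k(x,z)=\frac{\deg(\chi_k)}{\abs{G}}\chi_k(x^{-1}z)$ from (\ref{eqn:entries_idempotents_general}), this yields the matrix identity
\[
MM^*=\abs{G}\sum_{k=0}^{n}\frac{m_k}{\deg(\chi_k)}\,E_k
\]
inside the Bose--Mesner algebra $\mathbb{A}$.

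Finally I would read off the column space using the idempotent relations (\ref{eqn:minimal_idempotent_property_general}). Since $E_kE_l=\delta_{kl}E_k$ and $E_l$ is the orthogonal projection onto $V_l$, any matrix of the form $\sum_k c_kE_k$ acts on $V_l$ as the scalar $c_l$; as $\CC(G)=\bigoplus_k V_k$ by (\ref{eqn:direct_sum_eigenspaces}), its column space is $\bigoplus_{k:\,c_k\neq 0}V_k$. Here $c_k=\abs{G}\,m_k/\deg(\chi_k)$ is nonzero exactly when $m_k\neq 0$, i.e.\ exactly when $\chi_k$ is an irreducible constituent of $\xi$, i.e.\ when $k\in I$. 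Hence $\colsp(M)=\colsp(MM^*)=\bigoplus_{k\in I}V_k$. Note that no transitivity of the action on $\Omega$ is used anywhere: the count $\#\{a\in\Omega:xa=za\}$ equals $\abs{\operatorname{Fix}(x^{-1}z)}$ irrespective of the orbit structure, which is exactly why the statement is phrased for an arbitrary action.
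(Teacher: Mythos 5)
Your proposal is correct and takes essentially the same route as the paper: compute $MM^*$ entrywise to identify it with the matrix $\xi(x^{-1}z)$ of the permutation character, expand that matrix as a nonnegative combination of the idempotents $E_k$ via \eqref{eqn:entries_idempotents_general}, and read off the column space as $\bigoplus_{k\in I}V_k$. The only differences are presentational (you justify $\colsp(M)=\colsp(MM^*)$ explicitly and read off the column space spectrally, where the paper argues the two inclusions directly), so there is nothing to add.
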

\begin{proof}
We have $MM^\top \in \CC(G,G)$ and for $g,h \in G$ we have
\[
	(MM^\top)(g,h) = \sum_{(a,b) \in \Omega \times \Omega} M(g,(a,b))M(h,(a,b)).
\]
The summands are either 0 or 1. A summand is equal to 1 if and only if $ga=b$ and $ha=b$, so $(g^{-1}h) a = a$. Thus, the entry of $MM^\top$ at position $(g,h)$ is equal to the number of fixed points $|\operatorname{Fix}(g^{-1}h)|$ of $g^{-1}h$.

For the permutation character $\xi$, we have $\xi(g) = |\operatorname{Fix}(g)|$. Thus, considering the matrix $P \in \CC(G,G)$ corresponding to $\xi$ given by $P(g,h) = \xi(g^{-1}h) = |\operatorname{Fix}(g^{-1}h)|$, we obtain $P = MM^\top$. This implies
\[
	\colsp(M) = \colsp(MM^*) = \colsp(MM^\top) = \colsp(P).
\]
Now consider the decomposition
\[
	\xi = m_0 \chi_0 + \ldots + m_n \chi_n
\]
of $\xi$ into irreducible characters $\chi_k$ where $m_k \in \NN_0$. Recall that by \eqref{eqn:entries_idempotents_general}, the idempotent matrix $E_k$ of the conjugacy class association scheme is given by
\[
	E_k(g,h) = c_k \chi_k(g^{-1}h)
\]
for some constant $c_k>0$. Since
\[
	P(g,h) = \xi(g^{-1} h) = \sum_{k=0}^n m_k \chi_k(g^{-1} h) = \left( \sum_{k=0}^n m_k c^{-1}_k E_k \right)(g,h),
\]
we obtain (since $k \in I$ if and only if $m_k \neq 0$)
\begin{equation}\label{eqn:decomp_matrix_permchar}
	P = \sum_{k=0}^n m_kc^{-1}_kE_k = \sum_{k \in I} m_kc^{-1}_kE_k.
\end{equation}
Recall that $\colsp(E_k) = V_k$ and that the spaces $V_k$ are pairwise orthogonal. Hence, the column space of $P$ is contained in $\bigoplus_{k\in I} V_k$. Conversely, let $k \in I$ and $v$ be a column vector of $E_k$. From \eqref{eqn:minimal_idempotent_property_general}, we have $E_kv = v$ and $E_l v = 0$ for $l \neq k$, so it follows from \eqref{eqn:decomp_matrix_permchar} that
\[
	Pv = PE_kv = m_kc_k^{-1}v.
\]
Since $m_k \neq 0$ for $k \in I$, we conclude that $V_k = \colsp(E_k) \subseteq \colsp(P)$ for every $k \in I$. Thus, we have
\[
	\colsp(M) =	\colsp(P) = \bigoplus_{k \in I} V_k. \qedhere
\]
\end{proof}

Now we can describe the connection between transitivity and the dual distribution.

\begin{theorem}\label{thm:general_method}
Let $G$ be a finite group acting transitively on a set $\Omega$. Let $\chi_0,\chi_1,\dots,\chi_n$ be the irreducible characters of $G$ and define $I \subseteq \{0,1,\dots,n\}$ such that $k \in I$ if and only if $\chi_k$ is an irreducible constituent of the permutation character $\xi$ of $G$ on $\Omega$. Let $Y$ be a non-empty subset of $G$ with dual distribution $(a'_0,a'_1,\dots,a'_n)$. Then $Y$ is transitive on $\Omega$ if and only if
\[
a'_k=0\quad\text{for each $k\in I \,\setminus \{0\}$}.
\]
\end{theorem}
\begin{proof}
Note that $G$ acts transitively on $\Omega$ if and only if
\[
	M^\top \mathbbm{1}_G = c \cdot \mathbbm{1}_{\Omega \times \Omega}
\]
for some constant $c$, since this equation means that for any pair $(\omega,\omega') \in \Omega \times \Omega$ there exist precisely $c$ elements of $G$ that map $\omega$ to $\omega'$. Thus, $Y$ is transitive on $\Omega$ if and only if
\[
	M^\top \mathbbm{1}_Y = c' \cdot \mathbbm{1}_{\Omega \times \Omega}
\]
for some constant $c'$. This constant can be calculated by observing that if $Y$ is transitive, then so are the cosets $gY$ for $g \in G$. Since the disjoint union of transitive sets is a transitive set, we find that $c = |G/Y| \cdot c'$ which gives
\[
	c' = \frac{|Y|}{|G|} \cdot c. 
\]
Now we see that $Y$ is transitive on $\Omega$ if and only if
\[
\frac{1}{\abs{Y}}M^\top \mathbbm{1}_Y=\frac{1}{\abs{G}}M^\top \mathbbm{1}_G,
\]
hence if and only if 
\[
\mathbbm{1}_Y-\frac{\abs{Y}}{\abs{G}}\mathbbm{1}_G
\]
is orthogonal to the column space of $M$. By Theorem~\ref{thm:colsp_incidence_matrix}, the column space of $M$ is given by $\colsp(M)=\bigoplus_{k\in I}V_k$. Hence, $\mathbbm{1}_Y-\frac{\abs{Y}}{\abs{G}}\mathbbm{1}_G$ is orthogonal to the column space of $M$ if and only if it is orthogonal to the spaces $V_k$ with $k \in I$.

First, note that $V_0$ is spanned by $\mathbbm{1}_G$ and that
\[
	\langle \mathbbm{1}_Y-\frac{\abs{Y}}{\abs{G}}\mathbbm{1}_G, \mathbbm{1}_G \rangle = 0.
\]
Now let $k \in \{1,\ldots,n\}$ and $v \in V_k$. Since the spaces $V_k$ are pairwise orthogonal and $\mathbbm{1}_G \in V_0$, we find that
\[
	\langle \mathbbm{1}_Y-\frac{\abs{Y}}{\abs{G}}\mathbbm{1}_G, v \rangle = \langle \mathbbm{1}_Y, v \rangle.
\]
Thus, for $k \neq 0$, $\mathbbm{1}_Y-\frac{\abs{Y}}{\abs{G}}\mathbbm{1}_G$ is orthogonal to $V_k$ if and only if $\mathbbm{1}_Y$ is orthogonal to $V_k$. Recall that by \eqref{eqn:dual_distr_orthogonality_general}, $\mathbbm{1}_Y$ is orthogonal to $V_k$ if and only if $a'_k = 0$, which concludes the proof.
\end{proof}
In view of Theorem \ref{thm:general_method}, our next task is to study the representation theory of $G \wr S_n$ and decompose the permutation characters $\xi_H$ for a given subgroup $H$.

%%%%%%%%%%%%%%%%%%%%%%%%%%%%%%%%%%%%%%%%%%%%%%%%%%%%%%%%%%%%%%%%%

\section{Wreath products with symmetric groups}\label{sec:generalised symmetric group}

In this section, we collect the definitions and results about permutation groups and their representation theory that we need later on. We mostly follow \cite[Chapter I]{Mac1995}.

\subsection{Partitions}

An (integer) \emph{partition} is a sequence $\lambda = (\lambda_1,\lambda_2,\ldots)$ of non-negative integers that sum to a finite number and satisfy $\lambda_i \geq \lambda_{i+1}$ for every $i \in \N$. The \emph{size} of $\lambda$ is the number $|\lambda| = \sum_{i\geq 1} \lambda_i$ and if $|\lambda| = n$, we say that $\lambda$ is a partition of $n$. We often write partitions as finite sequences $(\lambda_1,\ldots,\lambda_k)$ only writing down the non-zero parts. When no confusion can arise, we omit the commas when writing down partitions. We also write $k^t$ to denote $t$ parts of size $k$, for example $2^11^2$ is the sequence $(2,1,1,0,0,\ldots)$. Furthermore, we denote by $\Par$ the set of all integer partitions (of any size) while $\Par_n$ denotes the set of partitions of $n$. The unique partition of $0$ will be denoted by~$\emptyset$.

If $\lambda$ is a partition of $n$, then the \emph{Young diagram} of $\lambda$ is an array of $n$ boxes with left-justified rows and top-justified columns such that row $i$ contains exactly $\lambda_i$ boxes. Each partition gives rise to a \emph{conjugate partition} $\lambda'$ whose parts are the number of boxes in the columns of the Young diagram of $\lambda$ (so the Young diagram of $\lambda'$ is the transpose of the Young diagram of $\lambda$).

We use the dominance order $\unlhd$ to compare partitions. Let $\lambda,\mu \in \Par$ be two partitions (not necessarily of the same size). We write $\lambda \unlhd \mu$ and say that $\lambda$ is \emph{dominated} by $\mu$ or $\mu$ \emph{dominates} $\lambda$ if
\[
	\sum_{i=1}^k \lambda_i \leq \sum_{i=1}^k \mu_i \quad \text{ for all } k \geq 1.
\]
Note that if $|\lambda|=|\mu|$, then we have
\[
	\lambda \unlhd \mu \; \Longleftrightarrow \; \lambda' \unrhd \mu'.
\]
Another helpful fact is that we have $\lambda \unlhd \mu$ if and only if the Young diagram of $\lambda$ can be transformed into the Young diagram of $\mu$ by moving boxes from the end of a row of $\lambda$ to a higher row one at a time (see for example \cite[Proposition~2.3]{Bry73}). When interpreting a partition as a sequence, this corresponds to adding vectors of the form $(0,\ldots,0,1,0,\ldots,0,-1,0,\ldots)$ to the partition.

We use two methods for combining partitions. If $\lambda$ and $\mu$ are partitions, then $\lambda \cup \mu$ is the partition of $|\lambda| + |\mu|$ that has as its parts exactly the parts of $\lambda$ and $\mu$, and $\lambda + \mu$ is the componentwise sum, that is $\lambda + \mu = (\lambda_1 + \mu_1,\lambda_2 + \mu_2,\ldots)$. It holds that
\[
	\left(\lambda \cup \mu \right)' = \lambda' + \mu ' \quad \text{ and } \quad \left(\lambda + \mu \right)' = \lambda' \cup \mu'.
\]

A partition $\lambda$ is \emph{contained} in a partition $\mu$, denoted by $\lambda \subseteq \mu$, if $\lambda_i \leq \mu_i$ for every $i \geq 1$. In terms of Young diagrams, this means that the Young diagram of $\lambda$ lies inside the Young diagram of $\mu$ if they are superimposed.

\subsection{Conjugacy classes}\label{sec:cc_wreath_product}

We now describe the conjugacy classes of $G \wr S_n = G^n \rtimes S_n$, where $G$ is finite abelian. We refer to \cite[Appendix B, Chapter I]{Mac1995} for further reading. The conjugacy classes of $G \wr S_n$ are indexed by functions $\ulambda: G \to \Par$ such that
\[
	\sum_{g \in G} |\ulambda(g)| = n.
\]
We write
\[
	\Lambda_n(G) = \{ \ulambda : G \to \Par : \sum_{g \in G} |\ulambda(g)| = n \}
\]
for the set of all these partition valued functions $\ulambda$.

Consider an element $((g_1,\dots,g_n),\pi) \in G \wr S_n$. The permutation $\pi \in S_n$ can be written as a product of disjoint cycles of the form $(i_1,\ldots,i_k)$. It turns out that the element $g_{i_1} + \ldots + g_{i_k}$ is determined up to conjugacy. Thus, since $G$ is abelian in our setting, it is determined uniquely. We call this element $g_{i_1} + \ldots + g_{i_k}$ the \emph{sign} of the cycle $(i_1,\ldots,i_k)$. For each $g \in G$, we can determine all the cycles in $\pi$ that have sign~$g$. Writing down all the cycle lengths that occur and ordering them decreasingly, we obtain a partition associated with the element $g$. The \emph{cycle type} of an element $((g_1,\dots,g_n),\pi) \in G\wr S_n$ is defined to be the mapping $\ulambda: G \to \Par$, where $\ulambda(g)$ equals the partition of cycle lengths of $\pi$ that have sign $g$. Then two elements of $G\wr S_n$ are conjugate if and only if they have the same cycle type. Since the sum of all cycle lengths of $\pi$ is $n$, the conjugacy classes of $G \wr S_n$ are indexed by the set $\Lambda_n(G)$.

We denote by $C_\ulambda$ the conjugacy class indexed by $\ulambda$. For example, the conjugacy class of the identity of $G \wr S_n$ is indexed by the element $\ulambda\in\Lambda_n(G)$ with $\ulambda(0) = (1^n)$ and $\ulambda(g) = \emptyset$ for every $g \neq 0$.

\subsection{Characters of wreath products}
In this section, we review the representation theory of wreath products of the form $G \wr S_n = G^n \rtimes S_n$, where $G$ is a finite abelian group. We first consider the more general case $A\rtimes H$, where $A$ and $H$ are finite groups with $A$ being abelian. We essentially follow~\cite[Sect. 8.2]{Ser1977}.

Since $A$ is abelian, each irreducible character of $A$ has degree $1$ and the irreducible characters $\theta$ of $A$ form a group $A'$. Note that $A$ is normal in $A \rtimes H$, so the group $H$ acts on $A'$ by
\[
(h\theta)(a)=\theta(h^{-1}ah) \quad \text{for $h\in H$, $\theta\in A'$ and $a\in A$}.
\]
Let $\theta$ be an irreducible character of $A$ and let $K$ be the stabiliser of $\theta$ under the action of $H$. Now let $\psi$ be an irreducible character of $K$. Then we can define the character $\theta\cdot\psi$ of $A\rtimes K$ by
\[
(\theta\cdot\psi)(ak)=\theta(a)\psi(k)\quad\text{for all $a\in A$, $k\in K$}.
\]
Finally, we induce $\theta\cdot\psi$ to a character $\chi_{\theta,\psi}$ of $A\rtimes H$, so that
\[
\chi_{\theta,\psi}=(\theta\cdot\psi)\uparrow_{A\rtimes K}^{A\rtimes H}.
\]
We have the following theorem.
\begin{theorem}[Serre {\cite[Proposition 25]{Ser1977}}]
\label{thm:irreps_wreath_product}
With the notation as above, $\chi_{\theta,\psi}$ is an irreducible character of $A\rtimes H$ and every irreducible character of $A\rtimes H$ is of this form. Moreover, if $\chi_{\theta,\psi}=\chi_{\theta',\psi'}$, then $\psi=\psi'$ and $\theta$ and $\theta'$ are in the same orbit under the action of $H$ on $A'$.
\end{theorem}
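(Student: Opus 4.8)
The plan is to deduce Theorem~\ref{thm:irreps_wreath_product} from Clifford theory together with Mackey's irreducibility criterion, exploiting that the normal subgroup $A$ of $\Gamma := A\rtimes H$ is abelian, so that all of its irreducible characters are linear. Throughout, fix $\theta\in A'$ with stabiliser $K=K_\theta$ in $H$. A useful preliminary observation is that $\theta$ extends to a linear character $\tilde\theta$ of $A\rtimes K$ via $\tilde\theta(ak)=\theta(a)$; this prescription is a homomorphism precisely because $K$ fixes $\theta$. Consequently $\theta\cdot\psi=\tilde\theta\cdot(\psi\circ p)$, where $p\colon A\rtimes K\to K$ is the projection, is a genuine character of $A\rtimes K$, and it is irreducible if and only if $\psi$ is, since tensoring the inflation of $\psi$ with a linear character preserves irreducibility. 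Note also that on $A$ the character $\theta\cdot\psi$ restricts to $\deg(\psi)$ copies of $\theta$.

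To show that $\chi_{\theta,\psi}=(\theta\cdot\psi)\uparrow_{A\rtimes K}^{\Gamma}$ is irreducible, I would apply Mackey's irreducibility criterion, which follows from Theorem~\ref{thm:mackey} by Frobenius reciprocity. Since $A\le A\rtimes K$, a set of representatives for the $(A\rtimes K,\,A\rtimes K)$-double cosets in $\Gamma$ may be chosen inside $H$, and for such a representative $s$ one has $(A\rtimes K)^s\cap(A\rtimes K)=A\rtimes(K^s\cap K)$. The restriction of $(\theta\cdot\psi)^s$ to $A$ consists of $\deg(\psi)$ copies of $s\theta$, while that of $\theta\cdot\psi$ consists of $\deg(\psi)$ copies of $\theta$; if $s\notin K$ then $s\theta\neq\theta$, so the two characters restricted to $A\rtimes(K^s\cap K)$ are already disjoint after further restriction to $A$, hence disjoint. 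Mackey's criterion then yields irreducibility of $\chi_{\theta,\psi}$.

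For the statement that every irreducible character of $\Gamma$ is of the form $\chi_{\theta,\psi}$, I would use Clifford's theorem. Let $V$ afford an irreducible character $\chi$ of $\Gamma$. Restricting to $A$, Clifford's theorem gives $V\downarrow_A=\bigoplus_{\eta\in\mathcal{O}}V_\eta$, a sum of isotypic components indexed by a single $H$-orbit $\mathcal{O}\subseteq A'$, with $\Gamma$ permuting the $V_\eta$ transitively through the natural action on $\mathcal{O}$. Choosing $\theta\in\mathcal{O}$ with stabiliser $K$, the stabiliser in $\Gamma$ of $V_\theta$ is $A\rtimes K$, and the Clifford correspondence gives $V\cong V_\theta\uparrow_{A\rtimes K}^{\Gamma}$, where $V_\theta$ is an irreducible $A\rtimes K$-module on which $A$ acts by the scalar $\theta$. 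Restricting the $A\rtimes K$-action along $K\hookrightarrow A\rtimes K$ realises $V_\theta$ as an irreducible $K$-module, say with character $\psi$; since $(a,k)\cdot v=\theta(a)(k\cdot v)$, the module $V_\theta$ affords $\theta\cdot\psi$, whence $\chi=\chi_{\theta,\psi}$.

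It remains to prove uniqueness. Restricting $\chi_{\theta,\psi}$ to $A$ (via Theorem~\ref{thm:mackey}, or directly from the induced module) gives $\deg(\psi)\sum_{\eta\in H\theta}\eta$, the sum over the $H$-orbit of $\theta$, each character with multiplicity $\deg(\psi)$. Comparing this with the corresponding expression for $\chi_{\theta',\psi'}$ and using linear independence of distinct characters of $A$ forces $H\theta=H\theta'$ and $\deg(\psi)=\deg(\psi')$; in particular $\theta,\theta'$ lie in the same orbit. After replacing $(\theta',\psi')$ by an $H$-conjugate pair $({}^h\theta',{}^h\psi')$, which leaves the induced character unchanged, we may assume $\theta'=\theta$. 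Finally, a Frobenius–Mackey computation gives $\langle\chi_{\theta,\psi},\chi_{\theta,\psi'}\rangle_\Gamma=\langle\theta\cdot\psi,\theta\cdot\psi'\rangle_{A\rtimes K}=\langle\psi,\psi'\rangle_K$, since the only contributing double-coset representative is $s=e$ and the $A$-part of the inner product equals $1$; hence $\chi_{\theta,\psi}=\chi_{\theta,\psi'}$ implies $\psi=\psi'$. The only genuinely delicate point is this last round of bookkeeping — verifying that conjugating $(\theta,\psi)$ by $h\in H$ leaves $\chi_{\theta,\psi}$ unchanged and that only the trivial double coset survives in the inner product — whereas irreducibility and surjectivity are essentially formal once the double cosets are identified with elements of $H$. (In the case $\Gamma=G\wr S_n=G^n\rtimes S_n$ with $G$ abelian, one has $A'\cong\widehat{G}^{\,n}$ with $S_n$ permuting coordinates, so the $H$-orbits on $A'$ are exactly the partition-valued data of Section~\ref{sec:cc_wreath_product}.)
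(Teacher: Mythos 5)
Your argument is correct, and it is essentially the standard ``method of little groups'' proof (Mackey's irreducibility criterion for the induced character, Clifford theory for exhaustion, and restriction to $A$ plus a Mackey--Frobenius inner-product computation for uniqueness), which is exactly the proof in the reference \cite[Prop.~25]{Ser1977} that the paper cites in lieu of giving its own argument. No gaps: the disjointness-on-$A$ step, the scalar action of $A$ on the isotypic component, and the surviving trivial double coset in the final inner product are all handled correctly.
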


We now want to apply Theorem~\ref{thm:irreps_wreath_product} to the group $G \wr S_n = G^n \rtimes S_n$, so we need the characters of $G^n$ and the characters of some subgroups of the symmetric group. Since $G$ is abelian, all representations of $G$ are one-dimensional and the group of the corresponding characters of $G$ is isomorphic to $G$ itself. We fix an isomorphism $g \mapsto\theta^g$ from $G$ to its character group. Then the irreducible characters of $G$ are the characters $\theta^g$ for $g \in G$.

The characters of $G^n$ are products of characters of $G$. Thus, they are indexed by the tuples $(g_1,\ldots,g_n)\in G^n$. The irreducible character $\theta^{(g_1,\ldots,g_n)}$ is given by
\[
	\theta^{(g_1,\ldots,g_n)} (x_1,\ldots,x_n) = \prod_{i=1}^n \theta^{g_i}(x_i) \; \text{ for every } (x_1,\ldots,x_n) \in G^n.
\]

Now consider an element $(g_1,\ldots,g_n) \in G^n$. The symmetric group $S_n$ acts on the characters $\theta^{(g_1,\ldots,g_n)}$ by permuting the entries of $(g_1,\dots,g_n)$. For $g\in G$, let $k(g)$ be the number of times $g$ occurs in $(g_1,\dots,g_n)$. Then $\sum_{g\in G}k(g)=n$ and the stabiliser of the character $\theta^{(g_1,\ldots,g_n)}$ under the action of $S_n$ is
\[
\Stab(\theta^{(g_1,\ldots,g_n)}) = \prod_{g\in G} S_{k(g)}
\]
where $S_{k(g)}$ permutes the indices $i \in [n]$ with $g_i = g$. This is a \emph{Young subgroup} of $S_n$. The irreducible characters of a factor $S_{k(g)}$ are indexed by the partitions of $k(g)$ and denoted by $\psi^\mu$ where $\mu$ is a partition. Now, the irreducible characters of $\Stab(\theta^{(g_1,\ldots,g_n)})$ are given by
\[
\psi^\umu = \bigotimes_{g\in G}\psi^{\umu(g)},
\]
where $\umu: G \to \Par$ is a function such that $\umu(g)$ is a partition of $k(g)$. Combining $\psi^\umu$ with the character $\theta^{(g_1,\ldots,g_n)}$ as in Theorem~\ref{thm:irreps_wreath_product} gives an irreducible character of $G \wr S_n$. Since we only need the orbit of $\theta^{(g_1,\ldots,g_n)}$, so the numbers $k(g)$ for $g \in G$, all information needed is encoded in $\umu$ (because $k(g) = |\umu(g)|$). Thus, the irreducible characters of $G \wr S_n$ are indexed by the set of mappings $\ulambda:G\to \Par$ such that
\[
\sum_{g\in G}\abs{\ulambda(g)}=n.
\]
This is the set $\Lambda_n(G)$ that we have seen earlier. For $\ulambda\in\Lambda_n(G)$, the corresponding irreducible character is denoted by $\chi^{\ulambda}$. For example, the trivial character of $G \wr S_n$ is $\chi^{\ulambda}$, where $\ulambda\in\Lambda_n(G)$ is the unique element with $\ulambda(0) = (n)$ and $\ulambda(g) = \emptyset$ for every $g \neq 0$.

Next, we present a different way to write down the irreducible characters of $G \wr S_n$. For a character $\theta$ of $G$ and a character $\psi$ of $S_k$, we define $\theta\wr\psi$ to be the character of $G\wr S_k$ given by
\[
(\theta\wr\psi)((g_1,\dots,g_k),\pi)=\theta(g_1)\cdots\theta(g_k)\,\psi(\pi).
\]
for all $(g_1,\ldots,g_k) \in G^k$ and all $\pi \in S_k$. Now consider $\ulambda\in\Lambda_n(G)$. Then we have partitions $\ulambda(g)$ for $g \in G$. The character $\chi^{\ulambda}$ of $G\wr S_n$ is given by
\begin{equation}
\chi^{\ulambda}=\bigotimes_{g\in G}\left(\theta^g \wr \psi^{\ulambda(g)}\right)\uparrow_{G\wr S}^{G\wr S_n},   \label{eqn:def:chi_lambda}
\end{equation}
where
\begin{equation}
S=\prod_{g\in G} S_{|\ulambda(g)|}   \label{eqn:def_group_S}
\end{equation}
is a Young subgroup of $S_n$ and $G \wr S$ denotes the subgroup $G^n \rtimes S$ of $G \wr S_n = G^n \rtimes S_n$.

\begin{remark}
Note that we did not specify a tuple $(g_1,\ldots,g_n) \in G^n$ as before. The element $\ulambda \in \Lambda_n(G)$ defines a tuple $(g_1,\ldots,g_n)$ up to a permutation of the entries (the element $g \in G$ appears $|\ulambda(g)|$ times). Different choices of tuples lead to conjugate Young subgroups $S$ and ultimately to the same character $\chi^{\ulambda}$.

In most cases, it is not important how exactly the group $S$ lies in the group $S_n$. For example, $S_{\{1,2\}} \times S_{\{3,4\}}$ and $S_{\{1,4\}} \times S_{\{2,3\}}$ are different subgroups of $S_4$, but they are conjugate to each other. The first subgroup corresponds to the ordered set partition $(\{1,2\},\{3,4\})$ while the latter corresponds to the ordered set partition $(\{1,4\},\{2,3\})$. Every time we write down a Young subgroup $S$, it implicitly comes with an ordered set partition $P$. In our case, the parts of $P$ are indexed by the group $G$. The factor $S_{|\ulambda(g)|}$ permutes the set of indices $i \in [n]$ of the tuple $(g_1,\ldots,g_n)$ such that $g_i = g$. This set of indices is also the part of $P$ that is indexed by $g \in G$.
\end{remark}

If $\ulambda(g) = \emptyset$ for some $g \in G$, the corresponding factor in the product in (\ref{eqn:def:chi_lambda}) or (\ref{eqn:def_group_S}) is trivial. The reader should interpret equation (\ref{eqn:def:chi_lambda}) as first choosing for each $g \in G$ how often each character $\theta^g$ appears, then choosing a partition $\ulambda(g)$ of the corresponding size to get a character $\psi^{\ulambda(g)}$, and then combining all these characters. In view of Theorem \ref{thm:irreps_wreath_product}, the $\chi^\ulambda$ are all the irreducible characters of $G \wr S_n$.

%%%%%%%%%%%%%%%%%%%%%%%%%%%%%%%%%%%%%%%%%%%%%%%%%%%%%%%%%%%%%%%%%

\section{Notions of Transitivity}\label{sec:notions of transitivity}

We continue to consider the group $W=G\wr S_n$ for a fixed abelian group $G$. We will now focus on the different types of transitivity in $W$, which means the different $\Omega$ in Theorem~\ref{thm:general_method}.
\subsection{Transitivity types}
Let $S(G)$ be the set of subgroups of $G$. For a mapping $\usigma:S(G)\to\Par$, we define
\[
\norm{\usigma}=\sum_{U\le G} \abs{\usigma(U)}.
\]
We write
\[
	\Sigma_n(G) = \{ \usigma : S(G) \to \Par : \norm{\usigma}=n \}
\]
for the set of all mappings $\usigma:S(G)\to\Par$ satisfying $\norm{\usigma}=n$. In what follows, an element $\usigma \in \Sigma_n(G)$ will be associated with a subgroup of $W$. Hence, every $\usigma$ will represent a certain notion of transitivity (for example edge- and face-transitivity for the 3-dimensional cube described in Section \ref{sec:intro}).

Each $\usigma\in\Sigma_n(G)$ can be represented by an ordered set of Young diagrams $Y_U$, one for each subgroup $U \subseteq G$, such that the total number of boxes equals $n$. We define a \emph{$\usigma$-tableau} to be a filling of the boxes with pairs $(c,i)$, where $c \in G$ and $i\in [n]$ such that all numbers $i \in [n]$ appearing in the pairs $(c,i)$ are pairwise distinct.

Note that $W$ acts on a filled Young diagram by letting $w \in W$ act on the fillings $(c,i)$ of the boxes (see Section \ref{sec:first_def_of_multiple_transitivity}). Consider a Young diagram $Y_U$ indexed by a subgroup $U$ and one of its rows $R$ of length $\ell$. Then the group $U \wr S_\ell$ acts on the row $R$ by permuting the fillings $(c,i)$ and changing the elements $c$ using elements of $U$. The partition $\usigma(U)$ contains the lengths of the rows of $Y_U$, so the group
\[
	U \wr S_{\usigma(U)} \cong U \wr S_{\usigma(U)_1} \times U \wr S_{\usigma(U)_2} \times \dots
\]
acts on a filled Young diagram $Y_U$ in the same way. We call two filled Young diagrams indexed by $U$ \emph{row-equivalent} if they are in the same orbit under this action. That is, the corresponding rows of the Young diagrams contain the same pairs $(c,i)$ up to permutation and multiplication by elements of $U$. Hence, we can also think of the colours $c$ appearing in the filling of the Young diagrams associated with $U$ as being cosets of $U$.

Two $\usigma$-tableaux are \emph{row-equivalent} if all of their Young diagrams $Y_U$ are row-equivalent for every subgroup $U$ of $G$. This induces an equivalence relation on $\usigma$-tableaux. A \emph{$\usigma$-tabloid} is an equivalence class of this relation.
\begin{example}
Consider $W = C_4 \wr S_6$ where we identify $C_4$ with $\{0,1,2,3\}$ in the natural way. Then the subgroups of $C_4$ are $\{0\}$, $\{0,2\}$ and $\{0,1,2,3\}$. Hence, every element $\usigma \in \Sigma_6(C_4)$ consists of three Young diagrams, one for each subgroup of $C_4$, such that the total number of boxes is 6. Consider $\usigma\in\Sigma_{6}(C_4)$ given by
\begin{center}
\begin{tabular}{ccc}
$\{0\}$ & $\{0,2\}$ & $\{0,1,2,3\}$\\[1ex]
\begin{ytableau}
\, & \, \\
\,
\end{ytableau}
&
\begin{ytableau}
\, & \, & \,
\end{ytableau}
&
\raisebox{1.3ex}{$\emptyset$}
\end{tabular}.
\end{center}
A possible $\usigma$-tableau is given by the following filling of the boxes (where we write $c,i$ for the filling $(c,i)$ for better readability)
\begin{center}
\begin{tabular}{ccc}
$\{0\}$ & $\{0,2\}$ & $\{0,1,2,3\}$\\[1ex]
\begin{tikzpicture}[scale=0.8]
\ytbox{0}{1}{$1,6$}
\ytbox{1}{1}{$0,4$}
\ytbox{0}{0}{$3,2$}
\end{tikzpicture}
&
\begin{tikzpicture}[scale=0.8]
\ytbox{0}{1}{$2,3$}
\ytbox{1}{1}{$0,1$}
\ytbox{2}{1}{$1,5$}
\node at (0,0.15) {};
\end{tikzpicture}
&
\begin{tikzpicture}
\node at (0,0.15) {};
\node at (0,1.25) {$\emptyset$};
\end{tikzpicture}
\end{tabular}
\end{center}
where we think of a pair $(c,i)$ as the number $i \in \{1,2,3,4,5,6\}$ coloured with the colour $c \in \{0,1,2,3\}$. The corresponding $\usigma$-tabloid is given by
\begin{center}
\begin{tabular}{ccc}
$\{0\}$ & $\{0,2\}$ & $\{0,1,2,3\}$\\[1ex]
\begin{tikzpicture}[scale=0.8]
\draw [thick] (0,0) rectangle + (1,1);
\draw [thick] (0,1) rectangle + (2,1);
\node [above] at (0.5,0.1) {$3,2$};
\node [above] at (0.5,1.1) {$1,6$};
\node [above] at (1.5,1.1) {$0,4$};
\end{tikzpicture}
&
\begin{tikzpicture}[scale=0.8]
\draw [thick] (0,1) rectangle + (3,1);
\node [above] at (0.5,1.1) {$2,3$};
\node [above] at (1.5,1.1) {$0,1$};
\node [above] at (2.5,1.1) {$1,5$};
\node at (0,0.15) {};
\end{tikzpicture}
&
\begin{tikzpicture}
\node at (0,0.15) {};
\node at (0,1.25) {$\emptyset$};
\end{tikzpicture}
\end{tabular}
\end{center}
where the order of the elements in a row does not matter any more. Furthermore, the $\usigma$-tabloid can also be written as
\begin{center}
\begin{tabular}{ccc}
$\{0\}$ & $\{0,2\}$ & $\{0,1,2,3\}$\\[1ex]
\begin{tikzpicture}[scale=0.8]
\draw [thick] (0,0) rectangle + (1,1);
\draw [thick] (0,1) rectangle + (2,1);
\node [above] at (0.5,0.1) {$3,2$};
\node [above] at (0.5,1.1) {$0,4$};
\node [above] at (1.5,1.1) {$1,6$};
\end{tikzpicture}
&
\begin{tikzpicture}[scale=0.8]
\draw [thick] (0,1) rectangle + (3,1);
\node [above] at (0.5,1.1) {$0,1$};
\node [above] at (1.5,1.1) {$0,3$};
\node [above] at (2.5,1.1) {$3,5$};
\node at (0,0.15) {};
\end{tikzpicture}
&
\begin{tikzpicture}
\node at (0,0.15) {};
\node at (0,1.25) {$\emptyset$};
\end{tikzpicture}
\end{tabular}
\end{center}
as the fillings only differ by a permutation of the rows and colour changes using the elements of $U$ in the Young diagram associated with $U$. Hence, we can think of the colours in the diagram associated with $U$ as being elements of $C_4/U$.
\end{example}

Note that $W$ acts on the set of $\usigma$-tabloids. We define a notion of transitivity using $\usigma$-tabloids.
\begin{definition}
A subset $Y$ of $W$ is called \emph{$\usigma$-transitive} if $Y$ is transitive on the set of $\usigma$-tabloids. 
\end{definition}
The stabiliser of a $\usigma$-tabloid is obtained by stabilising every row of every diagram. In the example above, the stabiliser is given by
\[
	\left(\{0\} \wr S_{\{4,6\}} \times \{0\} \wr S_{\{2\}} \right) \times \{0,2\} \wr S_{\{1,3,5\}} \cong \left(S_2 \times S_1 \right) \times C_2 \wr S_3.
\]
In general, the stabiliser of the tableau associated with $U$ is $U \wr S_{\usigma(U)}$. Hence, the stabiliser of a $\usigma$-tabloid is given by
\[
H_{\usigma}=\prod_{U\le G}\left(U \wr S_{\usigma(U)}\right) \cong  \prod_{U\le G}\left(U \wr S_{\usigma(U)_1} \times U \wr S_{\usigma(U)_2} \times \cdots\right)
\]
where $S_{\usigma(U)}$ denotes the Young subgroup associated with the partition $\usigma(U)$. If $\usigma(U) = \emptyset$ for some subgroup $U$, then $U \wr S_{\usigma(U)}$ is understood to be the trivial group. We write $\xi^\usigma$ for the permutation character $\xi_{H_\usigma}$.

By Lemma~\ref{lem:transitive group action}, a subset $Y$ of $W$ is \emph{$\usigma$-transitive} if and only if $Y$ is transitive on the cosets $W/H_{\usigma}$. Thus, we do not get more notions of transitivity by considering compositions instead of partitions in the definition of the set $\Sigma_n(G)$ since the subgroups $H_{\usigma}$ involved are conjugate to each other. Note that this also justifies that in Theorem~\ref{thm:main_result_introduction} we only consider $(\sigma,k)$-flags where $\sigma$ is a partition, since the stabiliser of a $(\sigma,k)$-flag is $S_\sigma \times C_2 \wr S_k$ where $S_\sigma = S_{\sigma_1} \times S_{\sigma_2} \times \cdots$ is a Young subgroup.

If $G$ is the group of one element, then $\usigma\in\Sigma_n(G)$ is given by a partition $\sigma$ of $n$ via $\usigma(G)=\sigma$ and the notion of a $\usigma$-transitive set coincides with that of a $\sigma$-transitive subset of $S_n$ studied in~\cite{MarSag2007}.

We also emphasise the important special case that arises for $\abs{G}>1$ when $\usigma\in\Sigma_n(G)$ is given by $\usigma(\{0\})=(1^t)$ and $\usigma(G)=(n-t)$. The corresponding subgroup is $G \wr S_{n-t}$ which appeared in Section~\ref{sec:first_def_of_multiple_transitivity}. Then a $\usigma$-transitive subset of $W$ is transitive on the set of $t$-tuples $((c_1,i_1),\dots,(c_t,i_t))$, where $i_1,i_2,\dots,i_t \in [n]$ are pairwise distinct integers and $c_1,c_2,\dots,c_t$ are elements of $G$. For $G=C_r$, we shall study these subsets in more detail in Section~\ref{sec:polys_and_designs}.

We can also give a geometric interpretation of some transitivity types using the $n$-dimensional cube.
\begin{example}
Let $G = C_2$ and $n \in \N$ so that $W$ is the symmetry group of the $n$-dimensional cube. Consider $\usigma\in \Sigma_n(G)$ given by $\usigma(\{0\}) = (t)$ and $\usigma(C_2) = (n-t)$. Then the stabiliser of a $\usigma$-tabloid is the subgroup 
\[
	H_{t} = S_t \times C_2 \wr S_{n-t}.
\]
The group $H_t$ is also the stabiliser of an $(n-t)$-dimensional face of the cube. Thus, a subset $Y$ of $W$ is $\usigma$-transitive if and only if $Y$ is transitive on the $(n-t)$-dimensional faces of the $n$-dimensional cube.

We can also give an interpretation for the hyperoctahedron. If we view $C_2 \wr S_n$ as the symmetry group of the hyperoctahedron, then the subgroup
\[
	H_{t} = S_t \times C_2 \wr S_{n-t}
\]
is the stabiliser of a $(t-1)$-dimensional face of the hyperoctahedron. Thus, a subset $Y$ of $W$ is $\usigma$-transitive if and only if it is transitive on the $(t-1)$-dimensional faces of the hyperoctahedron.
\end{example}

The following properties of the subgroup $H_\usigma$ will be helpful.

\begin{lemma}\label{lem:properties of H}
Let $\usigma \in \Sigma_n(G)$ and write $H = H_\usigma$. Then:
\begin{enumerate}[label=\textup{(}\alph*\textup{)}]
\item{
	$H \cap G^n = \prod\limits_{U \leq G} U^{|\usigma(U)|}$
}
\item{
	$H \cap S_n = \prod\limits_{U \leq G}S_{\usigma(U)}$
}
\item{
	$(H \cap G^n) \cap (H \cap S_n) = \{\id\}$
}
\item{
	$H = (H \cap G^n) (H \cap S_n)$
}
\end{enumerate}
\end{lemma}

\begin{proof}
Parts (a) and (b) follow immediately from the fact that $H_\usigma$ is a direct product of wreath products on disjoint subsets of $[n]$. Intersecting $H$ with the base group gives the direct product of the base groups of the factors. Part~(c) follows because $G^n \cap S_n = \{\id\}$. For (d), notice that $H \cap G^n$ and $H \cap S_n$ are subgroups of $H$ with trivial intersection. Since $H$ is a finite group, the result follows because $|H \cap G^n||H \cap S_n| = |H|$.
\end{proof}

In the next section, we show how these structures can be characterised as designs in an association scheme.

\subsection{Decomposition of the permutation characters}

For decomposing the permutation characters of the subgroups $H_{\usigma}$, a certain algorithmically defined relation is helpful. A similar approach for decomposing permutation characters of the hyperoctahedral group $B_n$ (the case $G = C_2$) appears in \cite{May1975}, stated in a very restricted manner. We adapt this approach to wreath products and generalise it to cover all finite abelian groups $G$.

\begin{remark}
Let us pause for a moment and carefully go through the setup because the notation can become overwhelming quickly.

We have two index sets that we are working with, the set $\Sigma_n(G)$ and the set~$\Lambda_n(G)$. The first one indexes the \emph{types of transitivity} and the second one indexes the \emph{irreducible characters}. Our goal is to find a bijection between transitivity types and sets of irreducible characters. This bijection takes as input a permutation character (in other words, a transitivity type) and outputs the set of its irreducible constituents. We do not get to choose this bijection. It is the function that outputs the subset $I$ in Theorem~\ref{thm:general_method}. In other words, it tells us the subset $T$ such that the structures we are interested in are Delsarte $T$-designs.

The set $\Sigma_n(G)$ indexes the transitivity types. Some of them have a nice geometric interpretation on regular polytopes. In general, an element $\usigma \in \Sigma_n(G)$ defines a $\usigma$-tabloid involving coloured set partitions and a subset~$Y$ of~$W$ is $\usigma$-transitive if it is transitive on $\usigma$-tabloids. It can be helpful to stay inside the symmetry group of the hypercube $C_2 \wr S_n$ when reading the results and think of the elements of $\Sigma_n(G)$ as defining substructures of the polytope, giving a geometric interpretation of the transitivity type $\usigma$.

On the other hand, the set $\Lambda_n(G)$ indexes the irreducible characters of the wreath product $G \wr S_n$. We can think of an element $\ulambda \in \Lambda_n(G)$ as a tuple with $|G|$ entries where each entry is a partition. In contrast, we can think of the elements of $\Sigma_n(G)$ as tuples with one entry per subgroup of $G$ where each entry is a partition.
\end{remark}

We now define a relation on $\Sigma_n(G)\times \Lambda_n(G)$ that gives us the bijection that we need. For a subgroup $U$ of $G$, we write
\[
U^\circ =\{\text{$g\in G:\theta^g(u)=1$  for all $u\in U$}\},
\]
so $U^\circ$ consists of the elements of $G$ that correspond to irreducible characters that are trivial on $U$.
\begin{definition}
\label{def:arrow_order}
For $\usigma\in \Sigma_n(G)$ and $\ulambda\in\Lambda_n(G)$, we write
\[
\usigma \; \rightarrow \; \ulambda
\]
if there exists $\umu\in\Lambda_n(G)$ such that $\umu(g)\unlhd\ulambda(g)$ for each $g\in G$ and $\umu$ can be obtained from $\usigma$ as follows.
\begin{enumerate}[label=\textup{(}\arabic*\textup{)}]
\item{
For each $U\le G$, colour each box of the Young diagram of $\usigma(U)$ with an element of~$U^\circ$.
}
\item{
The element $\umu \in \Lambda_n(G)$ arises from the colouring from step (1) in the following way:\\
For each $g\in G$, consider only the boxes coloured with $g$. Each row of each partition $\usigma(U)$ with exactly $k$ boxes coloured with $g$ becomes a part of size $k$ in $\umu(g)$ (the actual position of the elements in a row does not matter, only how often $g$ was placed there). In other words, $\umu(g)$ has a part of size $k$ for every row of a Young diagram $\usigma(U)$ with exactly $k$ appearances of $g$.
}
\end{enumerate}
\end{definition}
It is important to note that $\{0\}^\circ = G$ and $G^\circ = \{0\}$. This means that the boxes in $\usigma(\{0\})$ can be coloured with every element of $G$ while the boxes in $\usigma(G)$ all have to be coloured with the element $0$.
\begin{example}
We take $G=C_6$ and identify $C_6$ with $\{0,1,2,3,4,5\}$ in the natural way. Then the subgroups of $C_6$ are $\{0\}$, $\{0,3\}$, $\{0,2,4\}$, and $\{0,1,2,3,4,5\}$. We get
\begin{align*}
	\{0\}^\circ &= \{0,1,2,3,4,5\},\\
	\{0,3\}^\circ &= \{0,2,4\},\\
	\{0,2,4\}^\circ &= \{0,3\},\\
	\{0,1,2,3,4,5\}^\circ &= \{0\}.
\end{align*}
These are the elements we can use in the colouring of the Young diagrams in step (1) of Definition~\ref{def:arrow_order}. Consider $\usigma\in\Sigma_{14}(C_6)$ given by
\begin{center}
\begin{tabular}{cccc}
$\{0\}$ & $\{0,3\}$ & $\{0,2,4\}$ & $\{0,1,2,3,4,5\}$\\[1ex]
\begin{ytableau}
\, & \, \\
\,
\end{ytableau}
&
\begin{ytableau}
\, & \, & \,\\
\, & \,
\end{ytableau}
&
\begin{ytableau}
\, & \, & \,\\
\end{ytableau}
&
\begin{ytableau}
\, & \,\\
\,\\
\end{ytableau}
\end{tabular}.
\end{center}
\vspace*{1ex}
A possible colouring of $\usigma$ is given by
\begin{center}
\begin{tabular}{cccc}
$\{0\}$ & $\{0,3\}$ & $\{0,2,4\}$ & $\{0,1,2,3,4,5\}$\\[1ex]
\begin{ytableau}
2 & 5\\
5
\end{ytableau}
&
\begin{ytableau}
4 & 2 & 2\\
4 & 4
\end{ytableau}
&
\begin{ytableau}
3 & 0 & 3\\
\end{ytableau}
&
\begin{ytableau}
0 & 0\\
0\\
\end{ytableau}
\end{tabular}.
\end{center}
\vspace*{1ex}
Then the corresponding $\umu\in\Lambda_{14}(C_6)$ is given by
\begin{center}
\begin{tabular}{cccccc}
$0$ & $1$ & $2$ & $3$ & $4$ & $5$\\[1ex]
\ydiagram{2,1,1}
&
\raisebox{1.3ex}{$\emptyset$}
&
\ydiagram{2,1}
&
\ydiagram{2}
&
\ydiagram{2,1}
&
\ydiagram{1,1}
\end{tabular}.
\end{center}
\vspace*{1ex}
This shows that $\usigma \to \umu$ where
\[
	\umu(0) = (211),\; \umu(1) = \emptyset,\; \umu(2) = (21),\; \umu(3) = (2),\; \umu(4) = (21),\; \umu(5) = (11).
\]
It also shows that $\usigma \to \ulambda$ for every $\ulambda \in \Lambda_n(G)$ with $\umu(g) \unlhd \ulambda(g)$ for every $g \in G$. Using the dominance order $\unlhd$ to obtain $\ulambda$ from $\umu$ corresponds to moving up boxes in the Young diagrams of $\umu(g)$ for each $g \in G$ independently.
\end{example}
\begin{remark}
Here is another way to think about the relation $\rightarrow$. Start with the Young diagrams $\usigma(U)$ and $|G|$ empty containers that are indexed with the elements of $G$ (the tuple of these containers will become the element $\ulambda \in \Lambda_n(G)$ after the algorithm). For each row of each $\usigma(U)$, we have to move the boxes of the row to the containers indexed by the elements of $U^\circ$. Moving $k$ boxes from a row of $\usigma(U)$ to a container indexed by $g \in U^\circ$ means that we add a part of size $k$ to the container indexed by $g$. Only the number $k$ matters, not the actual position of the boxes in the row. Importantly, this means that for each row of each $\usigma(U)$ and each $g \in U^\circ$, we add either one part to the container indexed by $g$ or none. Each box of $\usigma$ must be moved into some container.

After all boxes have been moved, order all the parts in each container so that they form a partition. This defines an element $\umu \in \Lambda_n(G)$. Finally, one has the option to use the dominance order $\unlhd$ for each partition $\umu(g)$ independently so that we obtain an element $\ulambda \in \Lambda_n(G)$ with $\umu(g) \unlhd \ulambda(g)$ for every $g \in G$.
\end{remark}

We now prove that the relation $\to$ characterises the irreducible constituents of the permutation character $\xi^\usigma$.

\begin{theorem}
\label{thm:decomposition_perm_char}
For each $\usigma\in \Sigma_n(G)$ and each $\ulambda\in\Lambda_n(G)$, we have
\[
\bigang{\xi^\usigma, \chi^{\ulambda}} \ne 0 \; \Longleftrightarrow \; \usigma \; \rightarrow \; \ulambda.
\]
\end{theorem}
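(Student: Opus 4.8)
The plan is to compute the permutation character $\xi_{H_\usigma}$ by a two-step induction, matching the two-step structure of the subgroup $H_\usigma = \prod_{U \le G}(U \wr S_{\usigma(U)})$, and then read off the irreducible constituents using the description of $\chi^\ulambda$ from \eqref{eqn:def:chi_lambda}. Since $\xi_{H_\usigma} = 1_{H_\usigma}\uparrow^W_{H_\usigma}$, and $H_\usigma$ sits inside the intermediate subgroup $G \wr (\prod_{U} S_{\usigma(U)})$, induction in stages gives
\[
\xi_{H_\usigma} = \Bigl(\,1_{H_\usigma}\uparrow_{H_\usigma}^{G \wr S^\circ}\,\Bigr)\uparrow_{G \wr S^\circ}^{W},
\]
where $S^\circ = \prod_{U \le G} S_{\usigma(U)}$ is the Young subgroup of $S_n$ obtained by forgetting which subgroup $U$ each block came from. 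The inner induction is a product, over rows of the diagrams $\usigma(U)$, of inductions of the form $1_{U \wr S_k} \uparrow^{G \wr S_k}$; since $G$ is abelian, $U \wr S_k = U^k \rtimes S_k$ and the permutation character $1_{U^k}\uparrow^{G^k}_{U^k}$ decomposes as $\sum_{g : G/U \to \{\text{positions}\}} \theta^{(\dots)}$, i.e. as the sum of $\theta^h$ over the $h \in G^k$ whose entries all lie in a fixed coset pattern — equivalently, after using that the trivial character of $U$ induces to $G$ as $\sum_{g \in U^\circ}\theta^g$, the $k$ boxes of the row get independently coloured by elements of $U^\circ$. Keeping the $S_k$ symmetry, each monochromatic block of size $k'$ within the row contributes (by transitivity of $S_k$ on such colourings and Mackey/Frobenius reciprocity) a factor whose $S$-part is the trivial character $\psi^{(k')}$ of $S_{k'}$. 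This is exactly step (1)–(2) of Definition~\ref{def:arrow_order}: the row of $\usigma(U)$ with $k'$ boxes coloured $g$ produces a part of size $k'$ in $\umu(g)$, and the associated $S_n$-character is $\psi^{\umu(g)}$ with $\umu(g)$ of type "all parts, trivially permuted" — i.e. $1_{S^\circ}\uparrow$ contributes, for each colouring, the character $\bigotimes_{g}(\theta^g \wr \mathbf{1}_{S_{\umu(g)}})$ restricted appropriately.

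**Reading off the constituents.**

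Having written $1_{H_\usigma}\uparrow_{H_\usigma}^{G \wr S^\circ} = \sum_{\text{colourings}} \bigotimes_{g \in G}\bigl(\theta^g \wr \xi^{S}_{\umu(g)}\bigr)$ where $\xi^S_{\umu(g)} = 1_{S_{\umu(g)}}\uparrow^{S_{|\umu(g)|}}$ is the permutation character of the Young subgroup $S_{\umu(g)}$ in $S_{|\umu(g)|}$, I then induce up to $W = G \wr S_n$. By \eqref{eqn:def:chi_lambda}, $\chi^\ulambda = \bigotimes_g(\theta^g \wr \psi^{\ulambda(g)})\uparrow^{G \wr S_n}_{G \wr S}$, so computing $\langle \xi_{H_\usigma}, \chi^\ulambda\rangle$ reduces, via Theorem~\ref{thm:irreps_wreath_product} (which tells us that the $\theta^g$-isotypic decomposition is respected under induction, since the various $\theta^{(g_1,\dots,g_n)}$ lie in distinct $S_n$-orbits precisely according to the multiset $\{g_i\}$) and Frobenius reciprocity, to a product over $g \in G$ of inner products
\[
\bigl\langle\, \xi^{S_n}_{\umu(g)} ,\ \psi^{\ulambda(g)}\uparrow_{S_{|\ulambda(g)|}}^{S_{n}} \text{(appropriately)} \,\bigr\rangle,
\]
i.e. ultimately to $\langle 1_{S_{\umu(g)}}\uparrow^{S_m}, \psi^{\ulambda(g)}\rangle$ for $m = |\umu(g)| = |\ulambda(g)|$. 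Young's rule says this multiplicity is the Kostka number $K_{\ulambda(g),\umu(g)}$, which is nonzero if and only if $\umu(g) \unlhd \ulambda(g)$. Summing over colourings, $\langle \xi_{H_\usigma},\chi^\ulambda\rangle \ne 0$ iff there exists a colouring, i.e. an $\umu$, with $\umu(g)\unlhd\ulambda(g)$ for all $g$, which is precisely the definition of $\usigma \to \ulambda$.

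**Main obstacles.**

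The delicate point is the bookkeeping in the two-stage induction: one must verify carefully that when several rows from several diagrams $\usigma(U)$ get coloured, the $S_n$-characters assemble as $\bigotimes_g$ (permutation character of a Young subgroup indexed by the multiset of row-colour-counts), with no unexpected cross terms — this rests on the fact that $S^\circ$ is a Young subgroup and induction of a product of characters from a product of Young subgroups is governed by the Littlewood–Richardson / Young's rule, together with the $\theta^g$-orbit separation from Theorem~\ref{thm:irreps_wreath_product} which prevents mixing between different $g$. A second subtlety is handling the passage from "$1_U$ induced to $G$ equals $\sum_{g \in U^\circ}\theta^g$" at the level of a whole row with its $S_k$-action: one needs that $S_k$ acts transitively on the colourings of a row with prescribed colour-multiplicities (clear), so that the induced character's $S$-part on each monochromatic block is just the trivial character $\psi^{(k')}$ — this is what forces $\umu(g)$ to be built from genuine parts (one per row) rather than something finer. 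I expect the Mackey-formula computation controlling the double cosets $G\wr S \backslash W / H_\usigma$ to be the most technical part, but it should reduce cleanly to the combinatorics of distributing boxes into colour-containers exactly as in the "another way to think about $\to$" paragraph.
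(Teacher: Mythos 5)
Your proposal is correct, but it is organised differently from the paper's proof. The paper fixes the subgroup $K=G\wr S$ from which $\chi^{\ulambda}$ is induced and evaluates $\bigang{1\uparrow_{H_\usigma}^W\downarrow_K^W,\,\phi\cdot\rho}$ by Mackey's formula (Theorem~\ref{thm:mackey}); the $(K,H_\usigma)$-double cosets are what produce the colourings, and each summand splits into a $G^n$-factor (forcing the colours of the rows of $\usigma(U)$ to lie in $U^\circ$) and an $S$-factor handled by Young's rule. You instead build $\xi_{H_\usigma}$ from below by induction in stages through $G\wr\prod_U S_{\usigma(U)}$: the identification of $(G\wr S_k)/(U\wr S_k)$ with $(G/U)^k$ together with $1_U\uparrow^G_U=\sum_{g\in U^\circ}\theta^g$ decomposes each row's contribution into characters $\bigotimes_g\bigl(\theta^g\wr 1_{S_{m_g}}\bigr)\uparrow$, after which transitivity of induction, the orbit separation in Theorem~\ref{thm:irreps_wreath_product} and Young's rule reduce $\bigang{\xi_{H_\usigma},\chi^{\ulambda}}$ to a nonnegative sum, over admissible colourings $\umu$, of products of Kostka numbers $\prod_g K_{\ulambda(g)\umu(g)}$. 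The two routes are dual: your colour-multiplicity patterns correspond exactly to the paper's double cosets, and both arguments hinge on the same two ingredients (membership in $U^\circ$ coming from the abelian part, dominance coming from Young's rule). Your version yields a little more — an explicit decomposition of $\xi_{H_\usigma}$ with multiplicities — at the cost of the verification you rightly flag, namely that the stabiliser $\prod_g S_{m_g}$ of a colour pattern acts trivially on the corresponding isotypic line of $\CC[(G/U)^k]$, so that the $S$-part of each monochromatic block really is the trivial character; the paper's Mackey computation avoids this bookkeeping because it only needs to detect nonvanishing. Incidentally, the Mackey/double-coset analysis you anticipate as the most technical part is not actually needed on your route: it is precisely what the explicit stage-one decomposition replaces.
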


\begin{proof}
Let $\ulambda \in \Lambda_n(G)$. We pick a tuple $(g_1,\ldots,g_n) \in G^n$ such that every $g \in G$ occurs precisely $|\ulambda(g)|$ times. This defines a Young subgroup
\[
S=\prod_{g\in G}S_{g}
\]
where $S_{g}$ permutes the indices $i \in [n]$ with $g_i = g$. Hence, we have an ordered set partition $P$ of $[n]$ indexed by $G$. The part of $P$ indexed by $g \in G$ is the set of indices $i \in [n]$ with $g_i = g$.

We write $K=G\wr S = G^n \rtimes S$. By Theorem~\ref{thm:irreps_wreath_product}, we have $\chi^\ulambda=(\phi \cdot \rho)\uparrow_K^W$ where $\phi$ is the irreducible character of $G^n$ of type $(g_1,\ldots,g_n)$ and $\rho$ is the irreducible character of $S$ given by
\[
\rho=\bigotimes_{g\in G}\chi^{\ulambda(g)}.
\]
Write $H=H_\usigma$. By Frobenius reciprocity, we have
\[
\bigang{\xi^\usigma, \chi^{\ulambda}} = \bigang{1 \uparrow_H^W, \chi^\ulambda}=\bigang{1 \uparrow_H^W, (\phi \cdot \rho)\uparrow_K^W}=\bigang{1 \uparrow_H^W \downarrow_K^W, \phi \cdot \rho}.
\]
For $s\in W$, write $H^s=sHs^{-1}$. Let $R$ be a complete set of double coset representatives of $K\,\backslash W/H$. Then by Mackey's formula (Theorem~\ref{thm:mackey}), we have
\[
1 \uparrow_H^W \downarrow_K^W=\sum_{s \in R} 1 \downarrow_{H^s\cap K}^{H^s} \uparrow_{H^s\cap K}^K=\sum_{s \in R} 1 \uparrow_{H^s\cap K}^K,
\]
and by Frobenius reciprocity, we have
\begin{equation}
\label{eqn:scalar_prod_double_cosets}
\bigang{\xi^\usigma, \chi^{\ulambda}}=\sum_{s \in R} \bigang{1\uparrow_{H^s\cap K}^K , \phi \cdot \rho}=\sum_{s \in R} \bigang{1 , (\phi \cdot \rho)\downarrow_{H^s\cap K}^K}.
\end{equation}
Note that all summands on the right hand side are non-negative as each summand is the multiplicity of the irreducible character $1$ in the decomposition of the character $(\phi \cdot \rho)\downarrow_{H^s\cap K}^K$. Thus, the total sum is non-zero if and only if at least one summand is non-zero. Since $K=G\wr S$, we have $G^n \subseteq K$, so we may assume that all representatives in~$R$ are elements of $S_n$. Hence, \eqref{eqn:scalar_prod_double_cosets} is non-zero if and only if there exists $s\in S_n$ such that
\begin{equation}\label{eqn:thing_that_has_to_be_nonzero}
\bigang{1,(\phi \cdot \rho)\downarrow_{H^s\cap K}^K}\ne 0.
\end{equation}
From Lemma~\ref{lem:properties of H}, we find $H^s = (H^s \cap G^n)(H^s \cap S_n)$. Hence, as $K = G^n \rtimes S$, it follows that $H^s \cap K = (H^s \cap G^n)(H^s \cap S)$. Since $\phi$ is a character of~$G^n$ and $\rho$ is a character of~$S$ and since we have $(H^s \cap G^n)(H^s \cap S) = \{\id\}$, elementary calculations reveal that
\[
	\bigang{1,(\phi \cdot \rho)\downarrow_{H^s\cap K}^K}  = \big\langle 1 , \phi\downarrow_{H^s\cap G^n}^{G^n} \big\rangle \cdot \big\langle 1 , \rho\downarrow_{H^s\cap S}^S \big\rangle.
\]
Hence, the inner product in~\eqref{eqn:thing_that_has_to_be_nonzero} is non-zero if and only if
\begin{equation}
\big\langle 1 , \phi\downarrow_{H^s\cap G^n}^{G^n} \big\rangle \cdot \big\langle 1 , \rho\downarrow_{H^s\cap S}^S \big\rangle\ne 0.   \label{eqn:two_factors}
\end{equation}

The left hand side of~\eqref{eqn:two_factors} is non-zero if and only if both factors are non-zero. We focus on the second factor first.

As $H^s$ is conjugate to $H$, it is the stabiliser of a $\usigma$-tabloid. Hence, by Lemma~\ref{lem:properties of H}, $H^s\cap S_n$ is a Young subgroup of $S_n$ whose factors correspond to the parts of $\usigma(U)$ as $U$ ranges over the subgroups of $G$. Thus, there is an ordered set partition $Q$ of $[n]$ corresponding to the Young subgroup $H^s \cap S_n$ where every part is associated with a row of a Young diagram $\usigma(U)$ for some subgroup $U$.

Recall that $S$ is a Young subgroup with corresponding ordered set partition~$P$ where every part is associated with an element $g \in G$. It follows that $H^s \cap S$ is a Young subgroup where the corresponding ordered set partition consists precisely of the (non-empty) parts $p \cap q$ with $p \in P$ and $q \in Q$. This ordered set partition defines a colouring of $\usigma$ in the following way.

For every element $p \cap q$ with $p \in P$ and $q \in Q$, we colour the boxes corresponding to $p \cap q$ in the row associated with $q$ with the element $g$ associated with $p$. Hence, by collecting all parts associated with an element $g \in G$ into a partition $\umu(g)$, we get a partition tuple $\umu\in\Lambda_n(G)$ with $\abs{\umu(g)}=|\ulambda(g)|$ for all $g\in G$ and
\begin{equation}
H^s\cap S = \prod_{g \in G} S_{\umu(g)}.		\label{eqn:subgroup_umu}
\end{equation}
In other words, $\umu$ describes how the factors $S_g$ of $S$ decompose after intersecting $H^s$ with $S$. Each part of $\umu(g)$ corresponds to a row of $\usigma(U)$ for some $U\le G$ with exactly that many boxes coloured with $g$. Moreover, we have
\begin{align*}
\bigang{1 , \rho\downarrow_{H^s\cap S}^S}&=\bigang{1 ,\bigotimes_{g\in G}\chi^{\ulambda(g)}\downarrow_{H^s\cap S}^S}\\
&=\prod_{g\in G}\bigang{1 ,\chi^{\ulambda(g)}\downarrow^{S_{g}}_{S_{\umu(g)}}}\\
&=\prod_{g\in G}\bigang{1 \uparrow^{S_{g}}_{S_{\umu(g)}},\chi^{\ulambda(g)}}.
\end{align*}
The left hand side is non-zero if and only if every factor on the right hand side is non-zero. By Young's rule (see for example \cite[Theorem 2.11.2]{Sag2001}), this happens if and only if $\umu(g)\unlhd\ulambda(g)$ for each $g\in G$.

Now we consider the first factor on the left hand side of~\eqref{eqn:two_factors}. Since $\phi$ is an irreducible character of $G^n$, it is of degree $1$ and hence also an irreducible character of $H^s \cap G^n$. Because the trivial character $1$ is also an irreducible character of $H^s \cap G^n$, it follows that the inner product is non-zero if and only if $\phi \downarrow^{G^n}_{H^s\cap G^n} = 1$. In other words, $\phi(x) = 1$ for all $x \in H^s \cap G^n$.

Since $H^s$ is conjugate to $H$, it is the stabiliser of a $\usigma$-tabloid. Hence, $H^s\cap G^n$ is a direct product of subgroups of $G$ where $U\le G$ occurs $\abs{\usigma(U)}$ times (see Lemma~\ref{lem:properties of H}). This means that to each $i\in\{1,2,\dots,n\}$, we assign a subgroup $U_i$ of $G$ such that $H^s\cap G^n=U_1\times\cdots\times U_n$. Remember that the character~$\phi$ is of type $(g_1,\dots,g_n)$. Since $\phi$ is trivial on $H^s \cap G^n = U_1\times\cdots\times U_n$, we find that $(g_1,\dots,g_n)\in U_1^\circ\times\cdots\times U_n^\circ$. Hence, the left factor in \eqref{eqn:two_factors} is non-zero if and only if all boxes of the Young diagram $\usigma(U)$ are coloured with elements of $U^\circ$ for every subgroup $U \leq G$.

Having completed the setup, we can now prove the statement of the theorem. For the forward implication, suppose that $\bigang{\xi^\usigma, \chi^{\ulambda}} \neq 0$. Then both factors in~\eqref{eqn:two_factors} are non-zero. From~\eqref{eqn:subgroup_umu}, we obtain a colouring $\umu \in \Lambda_n(G)$ of $\usigma$. Since the first factor of~\eqref{eqn:two_factors} is non-zero, all rows of $\usigma(U)$ are coloured with elements of $U^\circ$, corresponding to step (1) in Definition~\ref{def:arrow_order}. Every part of $\umu(g)$ comes from a row with that many boxes coloured with $g$, corresponding to step~(2) in Definition~\ref{def:arrow_order}. Since the second factor of~\eqref{eqn:two_factors} is non-zero, we have $\umu(g) \unlhd \ulambda(g)$ for every $g \in G$. Thus, we obtain $\usigma \to \ulambda$.

For the reverse direction, assume that $\usigma \to \ulambda$. Then there exists a colouring $\umu \in \Lambda_n(G)$ of $\usigma$ obtained from steps (1) and (2) in Definition~\ref{def:arrow_order}. Now consider the Young subgroup
\[
	T=\prod_{g\in G}T_{g}
\]
of $S_n$ where each part $T_g$ permutes the boxes of $\usigma$ (equivalently elements $i \in [n]$) that are coloured with $g \in G$ in the colouring $\umu$. Notice that it follows from the definition of the relation $\to$ that $|\umu(g)|=|\ulambda(g)|$ for every $g \in G$.

Recall that the tuple $(g_1,\ldots,g_n)$ we picked in the beginning of the proof was arbitrary. The only requirement was that the element $g$ appears precisely $|\ulambda(g)|$ times. Hence, we can rearrange the tuple $(g_1,\ldots,g_n)$ such that $T_g$ permutes precisely the indices $i \in [n]$ with $g_i = g$. We find that all of the calculations we did for $S$ also hold for $T$. In fact, $S$ and $T$ are Young subgroups of the same shape so they are conjugate.

Remember that $H\cap S_n$ is a Young subgroup of $S_n$ whose parts correspond to the parts of $\usigma(U)$ for $U \leq G$. The group $H$ permutes the boxes in the rows of a Young diagram $\usigma(U)$ and the group $T$ permutes all boxes that are coloured with the same element $g \in G$. It follows that the group $H \cap T$ permutes all boxes that are coloured with the same element $g \in G$ in a common row of a Young diagram $\usigma(U)$. Hence, we find that $H \cap T$ is a Young subgroup of the form $\prod_{g\in G}S_{\umu(g)}$ for our colouring~$\umu$. Thus, Equation \eqref{eqn:subgroup_umu} holds for $S=T$ and $s = \id$.

Since $\umu(g)\unlhd\ulambda(g)$ for each $g\in G$, the second factor of \eqref{eqn:two_factors} is non-zero for $S=T$ and $s = \id$. Moreover, $\phi$ is trivial on $H\cap G^n$, so the first factor of \eqref{eqn:two_factors} is non-zero for $s=\id$. All in all, we obtain $\bigang{\xi^\usigma, \chi^{\ulambda}} \neq 0$.
\end{proof}

We can use Theorem \ref{thm:decomposition_perm_char} to obtain a characterisation of $\usigma$-transitive sets in $W$.
\begin{theorem}
\label{thm:characterisation_designs}
Let $\usigma\in\Sigma_n(G)$ and let $Y$ be a non-empty subset of $W$ with dual distribution $(a'_\ulambda)$. Then $Y$ is $\usigma$-transitive if and only if 
\[
a'_\ulambda=0\quad\text{for all $\ulambda\in\Lambda_n(G)$ satisfying $\usigma\to\ulambda$ and $\ulambda(0)\ne (n)$},
\]
where $0$ is the identity of $G$.
\end{theorem}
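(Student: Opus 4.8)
\textit{Proof proposal.} The plan is to derive this as a direct combination of Theorem~\ref{thm:general_method} and Theorem~\ref{thm:decomposition_perm_char}; the only real work is matching up the index sets and identifying the trivial character of $W$, so this is bookkeeping rather than new mathematics.

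First I would unwind the definition of $\usigma$-transitivity. By definition $Y$ is $\usigma$-transitive if and only if it is transitive on the set of $\usigma$-tabloids, and by Lemma~\ref{lem:transitive group action} this is equivalent to $Y$ being transitive on the coset space $\Omega = W/H_\usigma$, where $H_\usigma$ is the stabiliser of a $\usigma$-tabloid (this reduction is already recorded just after the definition of $H_\usigma$). The group $W$ acts transitively on $\Omega$, and the permutation character of this action is $\xi_{H_\usigma}$.

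Next I would apply Theorem~\ref{thm:general_method} with the group $W=G\wr S_n$ in place of $G$ and with this $\Omega$. Since the irreducible characters of $W$ are exactly the $\chi^\ulambda$ for $\ulambda\in\Lambda_n(G)$, and the conjugacy classes are likewise indexed by $\Lambda_n(G)$, the relevant index set is $\Lambda_n(G)$ and the dual distribution of $Y$ is $(a'_\ulambda)_{\ulambda\in\Lambda_n(G)}$. Theorem~\ref{thm:general_method} then says that $Y$ is transitive on $\Omega$ if and only if $a'_\ulambda=0$ for all $\ulambda\in I\setminus\{\ulambda_0\}$, where $I=\{\ulambda\in\Lambda_n(G):\bigang{\xi_{H_\usigma},\chi^\ulambda}\ne 0\}$ and $\ulambda_0$ is the index of the trivial character of $W$. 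By Theorem~\ref{thm:decomposition_perm_char}, $I=\{\ulambda\in\Lambda_n(G):\usigma\to\ulambda\}$.

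Finally I would translate the excluded index $\ulambda_0$ into the stated condition. The trivial character of $W=G\wr S_n$ is $\chi^{\ulambda_0}$ with $\ulambda_0(0)=(n)$ and $\ulambda_0(g)=\emptyset$ for $g\ne 0$. Conversely, any $\ulambda\in\Lambda_n(G)$ with $\ulambda(0)=(n)$ satisfies $n=\abs{\ulambda(0)}\le \sum_{g\in G}\abs{\ulambda(g)}=n$, which forces $\ulambda(g)=\emptyset$ for all $g\ne 0$ and hence $\ulambda=\ulambda_0$. Therefore $\{\ulambda\in I:\ulambda(0)=(n)\}\subseteq\{\ulambda_0\}$, and so $I\setminus\{\ulambda_0\}=\{\ulambda\in\Lambda_n(G):\usigma\to\ulambda\text{ and }\ulambda(0)\ne(n)\}$, which is precisely the set appearing in the statement. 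Combining the three steps yields the claim. The only point requiring any care is making sure that the abstract index $0$ in Theorem~\ref{thm:general_method} (the trivial character) is correctly identified with this concrete $\ulambda_0$, and that the permutation character there is indeed $\xi_{H_\usigma}$; I do not anticipate any substantive obstacle.
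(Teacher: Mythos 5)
Your proposal is correct and follows exactly the paper's route: the paper's proof of this theorem is simply that it follows immediately from Theorem~\ref{thm:general_method} and Theorem~\ref{thm:decomposition_perm_char}, and your additional bookkeeping (identifying the trivial character of $W$ with the unique $\ulambda_0$ having $\ulambda_0(0)=(n)$, and noting that the permutation character of the action on $\usigma$-tabloids is $\xi_{H_\usigma}$) is exactly the right way to make that combination explicit.
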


\begin{proof}
Follows immediately from Theorem \ref{thm:general_method} and Theorem \ref{thm:decomposition_perm_char}.
\end{proof}
Note that if we let $G = C_1$, then we have $W = C_1 \wr S_n = S_n$ and $\usigma$-transitivity becomes $\sigma$-transitivity in $S_n$. In this case, the set $\Lambda_n(C_1)$ is the set of partitions of $n$ and the relation $\to$ becomes the dominance order $\unlhd$. Thus, Theorem \ref{thm:characterisation_designs} specialises to Theorem 4 in \cite{MarSag2007}.

Theorem~\ref{thm:characterisation_designs} is crucial in order to derive a generalisation of the Livingstone--Wagner theorem in Section~\ref{sec:livingstone wagner} and to describe $t$-transitive sets in $C_r \wr S_n$ in Section~\ref{sec:polys_and_designs}.

\subsection{Cliques}

In this section we consider so-called \emph{cliques} in $W$ and discuss their relationship to transitive subsets.
\begin{definition}
Let $\usigma \in \Sigma_n(G)$. A non-empty subset $Y$ of $W$ is called a \emph{$\usigma$-clique} if for every pair of distinct elements $x,y \in Y$ there is no $\usigma$-tabloid that is fixed by $x^{-1}y$.
\end{definition}

We are interested in an algebraic characterisation of $\usigma$-cliques as cliques in an association scheme. This comes down to determining the conjugacy classes of the elements in the stabiliser $H_{\usigma}$ of a $\usigma$-tabloid. Note that all stabilisers of $\usigma$-tabloids are conjugate to each other, so the answer does not depend on the specific $\usigma$-tabloid chosen. Remember that $H_{\usigma}$ is an inner direct product of subgroups of the form $U \wr S_k$ for a subgroup $U \leq G$ and a positive integer $k$. The conjugacy classes of the group $U \wr S_k$ are indexed by the set $\Lambda_k(U)$. We interpret an element $\ulambda \in \Lambda_k(U)$ as an element of $\Lambda_k(G)$ by defining $\ulambda(g) = \emptyset$ for every $g \in G$ with $g \not\in U$. Moreover,  for $k < n$ we interpret an element $\ulambda \in \Lambda_k(G)$ as an element $\widetilde{\ulambda} \in \Lambda_n(G)$ by defining $\widetilde{\ulambda}(0) = \ulambda(0) \cup 1^{n-k}$ and $\widetilde{\ulambda}(g) = \ulambda(g)$ for all $g \in G \,\setminus \{0\}$.

For two elements $\ulambda \in \Lambda_m(G)$, $\umu \in \Lambda_n(G)$, we define the element $\ulambda \cup \umu \in \Lambda_{m+n}(G)$ via  $\left(\ulambda \cup \umu\right)(g) = \ulambda(g) \cup \umu(g)$ for every $g \in G$. We can now describe the conjugacy classes of the direct product of two groups of the form $U \wr S_k$.
\begin{lemma}\label{lem:conjugacy_classes_direct_product}
Let $U,V \leq G$ be subgroups of $G$ and $k,\ell \in \N$ with $k+\ell \leq n$. Consider the subgroup $H=\left( U \wr S_A \right) \times \left( V \wr S_B \right)$ of $G \wr S_n$ where $A,B \subseteq [n]$ are disjoint subsets with $|A| = k$ and $|B| = \ell$. Then the conjugacy classes of $H$ are indexed by the set
\[
	\left\lbrace \ulambda \cup \umu : \ulambda \in \Lambda_k(U),\;\umu \in \Lambda_\ell(V) \right\rbrace.
\]
\end{lemma}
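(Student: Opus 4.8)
The plan is to exploit the fact that $H = (U \wr S_A) \times (V \wr S_B)$ is an internal direct product of two groups, so its conjugacy classes are simply pairs of conjugacy classes of the factors, and then to translate the known indexing of the conjugacy classes of each factor (described in Section~\ref{sec:cc_wreath_product}) into the claimed description via the $\cup$-operation. First I would recall the general principle that for a direct product $H_1 \times H_2$ of finite groups, two elements $(h_1,h_2)$ and $(h_1',h_2')$ are conjugate in $H_1 \times H_2$ if and only if $h_1$ is conjugate to $h_1'$ in $H_1$ and $h_2$ is conjugate to $h_2'$ in $H_2$; hence the conjugacy classes of $H_1 \times H_2$ are in bijection with pairs of conjugacy classes, one from each factor. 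Applying this with $H_1 = U \wr S_A \cong U \wr S_k$ and $H_2 = V \wr S_B \cong V \wr S_\ell$, and using that the conjugacy classes of $U \wr S_k$ are indexed by $\Lambda_k(U)$ and those of $V \wr S_\ell$ by $\Lambda_\ell(V)$, we get that the conjugacy classes of $H$ are indexed by pairs $(\ulambda,\umu)$ with $\ulambda \in \Lambda_k(U)$ and $\umu \in \Lambda_\ell(V)$.

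Next I would check that the map $(\ulambda,\umu) \mapsto \ulambda \cup \umu$ (after first embedding $\Lambda_k(U)$ into $\Lambda_k(G)$ and $\Lambda_\ell(V)$ into $\Lambda_\ell(G)$ by padding with $\emptyset$, as described just before the lemma) is well-defined and injective, so that $\{\ulambda \cup \umu : \ulambda \in \Lambda_k(U),\ \umu \in \Lambda_\ell(V)\}$ is genuinely an indexing set. Well-definedness is clear since $\abs{\ulambda \cup \umu} = k + \ell$. For injectivity, since $A$ and $B$ are disjoint, the cycle-type data coming from the $S_A$-part and the $S_B$-part occupy disjoint sets of indices; more concretely, one recovers $\ulambda$ and $\umu$ from $\ulambda \cup \umu$ together with the knowledge of the fixed decomposition $[n] = A \sqcup B \sqcup (\text{rest})$. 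Here one must be slightly careful about the implicit $1$'s introduced by the $\widetilde{(\cdot)}$ embedding into $\Lambda_n(G)$, but since $k$ and $\ell$ are fixed, a partition of cycle lengths coming from $S_A$ has total size $k$ and one coming from $S_B$ has total size $\ell$, so the splitting is forced.

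The remaining point is to verify that the bijection "pair of conjugacy classes $\leftrightarrow$ cycle type" is exactly the $\cup$-operation: an element of $H$ is a pair $(w_1,w_2)$ with $w_1 \in U \wr S_A$ and $w_2 \in V \wr S_B$; its cycle type as an element of $G \wr S_n$ is computed cycle-by-cycle, and since the cycles of the underlying permutation $\pi_1 \pi_2$ split into the cycles of $\pi_1$ (supported on $A$) and the cycles of $\pi_2$ (supported on $B$), with signs computed within $U$, resp.\ $V$, the cycle type of $(w_1,w_2)$ is precisely $\ulambda(w_1) \cup \umu(w_2)$ where $\ulambda(w_1) \in \Lambda_k(U)$ and $\umu(w_2) \in \Lambda_\ell(V)$ are the cycle types of the two factors. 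Combining this with the direct-product principle gives the claim. I expect the main obstacle to be purely bookkeeping: making the various embeddings $\Lambda_k(U) \hookrightarrow \Lambda_k(G) \hookrightarrow \Lambda_n(G)$ explicit enough that the injectivity of $(\ulambda,\umu)\mapsto \ulambda\cup\umu$ and the identification with genuine $H$-conjugacy (as opposed to $G \wr S_n$-conjugacy) are both transparent; there is no substantive difficulty beyond this.
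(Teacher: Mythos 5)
Your third paragraph is, in substance, the paper's proof: write an element of $H$ as $(f,\pi)(g,\sigma)$ with $\pi$ supported on $A$ and $\sigma$ on $B$; disjointness gives $(f,\pi)(g,\sigma)=(f+g,\pi\sigma)$, the cycles of $\pi\sigma$ are exactly the cycles of $\pi$ together with those of $\sigma$, the cycle signs are unchanged (and lie in $U$, resp.\ $V$), and the cycle type in $G\wr S_n$ is $\ulambda\cup\umu$ padded with $1$-cycles on $[n]\setminus(A\cup B)$. The genuine problem is your second paragraph: the map $(\ulambda,\umu)\mapsto\ulambda\cup\umu$ from $\Lambda_k(U)\times\Lambda_\ell(V)$ to $\Lambda_{k+\ell}(G)$ is \emph{not} injective, and your argument that ``the splitting is forced'' by the sizes $k$ and $\ell$ fails: $\ulambda\cup\umu$ records, for each $g\in G$, only the multiset of parts with sign $g$, and forgets which factor each part came from; the constraints $\abs{\ulambda}=k$, $\abs{\umu}=\ell$ do not recover this. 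For instance, with $U=V=G=C_2$, $k=\ell=1$, $n=2$, the group $H\cong C_2\times C_2$ has four conjugacy classes, while the proposed index set has only three elements (the ``mixed'' symbol arises from two distinct pairs); more generally, whenever $U=V$ and $k=\ell$, the pairs $(\ulambda,\umu)$ and $(\umu,\ulambda)$ with $\ulambda\ne\umu$ yield the same union. So the bijection between $H$-conjugacy classes and the stated set that your direct-product framing aims at simply does not exist in general.

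This does not sink the lemma, because the statement is to be read -- and is only ever used, via the set $C(\usigma)$ and Theorem~\ref{thm:cc_stabiliser} -- as identifying which conjugacy classes of $G\wr S_n$ contain elements of $H$, i.e.\ the possible cycle types of elements of $H$ inside the big group; under this labelling distinct $H$-classes may fuse, and injectivity is neither true nor needed. The paper's proof accordingly consists only of the cycle-type computation you give in your third paragraph and never claims a bijection with $H$-classes. If you drop the bijection framing and the injectivity step, what remains of your proposal coincides with the paper's argument.
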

\begin{proof}
Let $x=(f,\sigma) \in U \wr S_A$ and $y=(g,\pi) \in V \wr S_B$. Then their product is given by
\[
	xy = (f+\sigma g,\sigma\pi).
\]
Here, we have $f = (f_1,\ldots,f_n) \in G^n$ with $f_a \in U$ for $a \in A$ and $f_i = 0$ otherwise. Similarly, we have $g = (g_1,\ldots,g_n) \in G^n$ with $g_b \in V$ for $b \in B$ and $g_i = 0$ otherwise. Note that since $A$ and $B$ are disjoint, $\sigma$ acts trivially on $g$ and we find that
\[
	xy = (f+g,\sigma\pi).
\]
Recall that the conjugacy classes of $x$ and $y$ are given by the cycle lengths of $\sigma$ and $\pi$, respectively, together with the signs of the cycles (see Section~\ref{sec:cc_wreath_product}). Since $S_A$ and $S_B$ permute disjoint subsets of $[n]$, we find that the set of cycle lengths of $\sigma\pi$ is simply the disjoint union of the set of cycle lengths of $\sigma$ and the set of cycle lengths of $\pi$. Moreover, since $A$ and $B$ are disjoint, the cycle signs of $\sigma$ and $\pi$ do not change. Hence, if $x$ is of cycle type $\ulambda$ and $y$ is of cycle type $\umu$, then $xy$ is of cycle type $\ulambda \cup \umu$. The cycle type of $xy$ as an element of $G \wr S_n$ is then obtained by adding cycles of length 1 for all elements of $[n] \,\setminus\left(A\cup B\right)$ (if $k+\ell < n$). This corresponds exactly to how we identified elements of $\Lambda_{k+\ell}(G)$ with elements of $\Lambda_n(G)$.
\end{proof}
Denote by $C(\usigma)$ the set of all $\ulambda \in \Lambda_n(G)$ such that there exists an element in~$H_{\usigma}$ that lies in the conjugacy class $C_{\ulambda}$ of $W$. Applying Lemma~\ref{lem:conjugacy_classes_direct_product} repeatedly immediately gives the following result.
\begin{theorem}\label{thm:cc_stabiliser}
Let $\usigma \in \Lambda_n(G)$. Consider a stabiliser $H_{\usigma}$ of a $\usigma$-tabloid and write
\[
	H_{\usigma} = H_1 \times \dots \times H_\ell
\]
for some $\ell \in \N$ where $H_i = U_i \wr S_{k_i}$ for some subgroup $U_i \leq G$ and $k_i \in \N$. Then we have
\[
	C(\usigma) = \left\lbrace \bigcup_{i = 1}^\ell \ulambda^{i} : \ulambda^{i}\in \Lambda_{k_i}(U_i) \right\rbrace.
\]
\end{theorem}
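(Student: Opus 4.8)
The plan is to prove Theorem~\ref{thm:cc_stabiliser} by induction on the number $\ell$ of factors, with Lemma~\ref{lem:conjugacy_classes_direct_product} supplying the inductive step. Before the induction I would first pin down the base case and the two layers of identification that are in play. For $\ell=1$ we have $H_\usigma = U_1 \wr S_{k_1}$ acting on a block $A_1\subseteq[n]$ with $\abs{A_1}=k_1$; as recalled in Section~\ref{sec:cc_wreath_product}, the conjugacy classes of $U_1\wr S_{k_1}$ are indexed by $\Lambda_{k_1}(U_1)$, and an element of cycle type $\ulambda^1\in\Lambda_{k_1}(U_1)$, regarded as an element of $W=G\wr S_n$, has cycle type obtained by first viewing $\ulambda^1$ in $\Lambda_{k_1}(G)$ (setting $\ulambda^1(g)=\emptyset$ for $g\notin U_1$) and then padding with $n-k_1$ trivial cycles, i.e. passing to $\widetilde{\ulambda^1}\in\Lambda_n(G)$. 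Under precisely this identification $C(\usigma)=\{\ulambda^1:\ulambda^1\in\Lambda_{k_1}(U_1)\}$, which is the claimed formula for $\ell=1$.

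For the inductive step I would write $H_\usigma = H'\times H_\ell$ with $H'=H_1\times\cdots\times H_{\ell-1}$ acting on the block $A'=A_1\cup\cdots\cup A_{\ell-1}$ and $H_\ell=U_\ell\wr S_{A_\ell}$ acting on $A_\ell$, where $A'\cap A_\ell=\emptyset$. By the induction hypothesis, the cycle types of elements of $H'$, regarded inside $G\wr S_{A'}$, range exactly over $\{\bigcup_{i=1}^{\ell-1}\ulambda^i:\ulambda^i\in\Lambda_{k_i}(U_i)\}$. The computation in the proof of Lemma~\ref{lem:conjugacy_classes_direct_product} uses only that the two factors are supported on disjoint coordinate blocks: for $x=(f,\pi)\in H'$ and $y=(g,\sigma)\in H_\ell$ we have $xy=(f+g,\pi\sigma)$ with $\pi$ fixing the support of $g$ and $\sigma$ fixing the support of $f$, so the cycles of $\pi\sigma$ are the disjoint union of those of $\pi$ and of $\sigma$ and each retains its original sign. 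Hence $xy$ has cycle type equal to the $\cup$-union of those of $x$ and $y$, and conversely every such union is realised. Reading Lemma~\ref{lem:conjugacy_classes_direct_product} in this mildly more general form, where one factor may itself be a product of wreath products rather than a single $U\wr S_A$, gives $C(\usigma)=\{(\bigcup_{i=1}^{\ell-1}\ulambda^i)\cup\ulambda^\ell:\ulambda^i\in\Lambda_{k_i}(U_i)\}=\{\bigcup_{i=1}^\ell\ulambda^i:\ulambda^i\in\Lambda_{k_i}(U_i)\}$, closing the induction.

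The argument is essentially bookkeeping, so the only point I would treat as a genuine (if minor) obstacle is the compatibility of the two identifications $\Lambda_{k_i}(U_i)\hookrightarrow\Lambda_{k_i}(G)$ and $\Lambda_m(G)\hookrightarrow\Lambda_n(G)$ (padding with $1^{n-m}$) with the operation $\cup$. One must be careful that the $n-(k_1+\cdots+k_\ell)$ trivial cycles coming from the coordinates outside $A_1\cup\cdots\cup A_\ell$ are inserted exactly once, at the very end, rather than once per factor: what appears in $C(\usigma)$ is the single element $\widetilde{\bigcup_{i=1}^\ell\ulambda^i}$, with the union first formed in $\Lambda_{k_1+\cdots+k_\ell}(G)$ and then padded once, and \emph{not} $\bigcup_{i=1}^\ell\widetilde{\ulambda^i}$. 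Fixing the convention that $C(\usigma)\subseteq\Lambda_n(G)$ is understood via this single final padding makes the induction run cleanly and matches the statement; everything else — that cycles on disjoint supports do not interact and that signs are preserved — is already contained in the proof of Lemma~\ref{lem:conjugacy_classes_direct_product}.
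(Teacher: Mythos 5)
Your proof is correct and is essentially the paper's own argument: the paper disposes of Theorem~\ref{thm:cc_stabiliser} with the single remark that applying Lemma~\ref{lem:conjugacy_classes_direct_product} repeatedly gives the result, and your induction on $\ell$ (together with the observation that the lemma's disjoint-support computation works verbatim when one factor is itself a product of wreath products) is precisely the careful write-up of that repeated application. Your caution about inserting the trivial cycles only once is sound, though here $k_1+\cdots+k_\ell=n$ because $H_\usigma$ stabilises a $\usigma$-tabloid, so the final padding is in fact empty.
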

The following result should be compared to Theorem~\ref{thm:characterisation_designs} and highlights that the concept of a $\usigma$-clique is dual to the concept of $\usigma$-transitivity. Recall that the conjugacy class of the identity of $W$ is indexed by the unique element $\ulambda \in \Lambda_n(G)$ with $\ulambda(0)=(1^n)$.
\begin{theorem}\label{thm:characterisation_cliques}
Let $\usigma \in \Sigma_n(G)$ and let $Y$ be a non-empty subset of $W$ with inner distribution $(a_{\ulambda})$. Then $Y$ is a $\usigma$-clique if and only if
\[
	a_\ulambda=0\quad\text{for all $\ulambda\in C(\usigma)$ with $\ulambda(0)\ne (1^n)$}.
\]
\end{theorem}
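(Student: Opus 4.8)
The plan is to reduce the combinatorial clique condition to a statement about which conjugacy classes of $W$ meet the stabiliser $H_\usigma$, and then translate that into a statement about the inner distribution via non-negativity.

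First I would record the observation underlying everything: an element $w\in W$ fixes some $\usigma$-tabloid if and only if the cycle type $\ulambda$ of $w$ lies in $C(\usigma)$. Indeed, $w$ fixes a $\usigma$-tabloid precisely when $w$ lies in the stabiliser of that tabloid; since $W$ acts transitively on $\usigma$-tabloids, Lemma~\ref{lem:transitive group action}(a) says every such stabiliser is a conjugate $gH_\usigma g^{-1}$ of $H_\usigma$. Hence $w$ fixes some $\usigma$-tabloid iff $w\in gH_\usigma g^{-1}$ for some $g\in W$, iff the conjugacy class of $w$ intersects $H_\usigma$, which by the definition of $C(\usigma)$ is exactly the condition $\ulambda\in C(\usigma)$.

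Next I would rewrite the clique property. By definition $Y$ is a $\usigma$-clique iff for every pair of \emph{distinct} $x,y\in Y$ the element $x^{-1}y$ fixes no $\usigma$-tabloid, i.e.\ (by the observation above) the cycle type of $x^{-1}y$ does not lie in $C(\usigma)$; equivalently, for every $\ulambda\in C(\usigma)$ there is no pair $(x,y)\in Y\times Y$ with $x\ne y$ and $x^{-1}y\in C_\ulambda$. The only subtlety is the identity class: the unique $\ulambda$ with $\ulambda(0)=(1^n)$ indexes $\{e\}$, which always lies in $C(\usigma)$ since $e\in H_\usigma$, but for it the condition ``no distinct $x,y$ with $x^{-1}y\in C_\ulambda$'' holds vacuously, whereas for every other $\ulambda\in C(\usigma)$ the relation $x^{-1}y\in C_\ulambda$ already forces $x\ne y$. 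Therefore $Y$ is a $\usigma$-clique iff for every $\ulambda\in C(\usigma)$ with $\ulambda(0)\ne(1^n)$ there is no pair $(x,y)\in Y\times Y$ whatsoever with $x^{-1}y\in C_\ulambda$.

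Finally I would invoke non-negativity: $a_\ulambda=\frac{1}{|Y|}\sum_{x,y\in Y}D_\ulambda(x,y)$ is a sum of non-negative terms, so it vanishes iff no pair $(x,y)\in Y\times Y$ satisfies $x^{-1}y\in C_\ulambda$; combining this with the previous paragraph yields the claimed equivalence. There is no genuine obstacle here — each step is a direct unwinding of definitions — and the only point that requires a little care is isolating the identity class so that the ``distinct elements'' clause in the definition of a $\usigma$-clique matches the restriction $\ulambda(0)\ne(1^n)$ in the conclusion.
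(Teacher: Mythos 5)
Your proposal is correct and follows essentially the same route as the paper: both arguments rest on the observation that an element fixes some $\usigma$-tabloid exactly when its conjugacy class meets $H_\usigma$ (i.e.\ its cycle type lies in $C(\usigma)$), since the stabilisers of $\usigma$-tabloids are precisely the conjugates of $H_\usigma$, and then translate this into the vanishing of the corresponding inner-distribution entries via their non-negativity, with the identity class set aside because distinct $x,y$ never give $x^{-1}y=e$. Your handling of that identity-class subtlety is, if anything, slightly more explicit than the paper's.
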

\begin{proof}
Let $x,y \in Y$ be distinct elements of $Y$. Then the conjugacy class of $x^{-1}y$ is not that of the identity of $W$. If $a_{\ulambda} = 0$ for every $\ulambda \in C(\usigma)$ with $\ulambda(0) \neq (1^n)$, then the conjugacy class of $x^{-1}y$ is not indexed by an element $\ulambda \in C(\usigma)$ because in view of \eqref{eqn:def_inner_distribution_general}, $x^{-1}y \in C_{\ulambda}$ implies $a_{\ulambda} > 0$. It follows that $x^{-1}y$ cannot be an element of $H^w = wH_{\usigma}w^{-1}$ for any $w \in W$. Since the groups $H^w$ for $w \in W$ are exactly all the stabilisers of $\usigma$-tabloids, $x^{-1}y$ does not fix a $\usigma$-tabloid and thus $Y$ is a $\usigma$-clique.

For the converse direction, note that if $x \in C_{\ulambda}$ stabilises a $\usigma$-tabloid $T$, then $wxw^{-1}$ stabilises the $\usigma$-tabloid $wT$. Hence, either all elements of a conjugacy class $C_{\ulambda}$ stabilise a $\usigma$-tabloid or none. Thus, if $Y$ is a $\usigma$-clique, then the conjugacy class of the quotient $x^{-1}y$ for distinct $x,y \in Y$ cannot be a conjugacy class of an element of $H_{\usigma}$. This shows that $a_{\ulambda} = 0$ for $\ulambda \in C(\usigma)$ with $\ulambda(0) \neq (1^n)$.
\end{proof}

We now establish a connection between $\usigma$-transitive sets and $\usigma$-cliques in $W$.
\begin{theorem}\label{thm:clique_design_bound}
Let $\usigma \in \Lambda_n(G)$, let $H=H_{\usigma}$ be the stabiliser of a $\usigma$-tabloid and let $Y$ be a non-empty subset of $W$.
\begin{enumerate}[label=\textup{(}\alph*\textup{)}]
\item{
If $Y$ is a $\usigma$-clique, then $|Y| \leq |W|/|H|$ with equality if and only if $Y$ is $\usigma$-transitive.
}
\item{
If $Y$ is $\usigma$-transitive, then $|Y| \geq |W|/|H|$ with equality if and only if $Y$ is a $\usigma$-clique.
}
\end{enumerate}
In both cases, equality implies that $Y$ is sharply $\usigma$-transitive.
\end{theorem}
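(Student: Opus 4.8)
The plan is to use the inner/dual-distribution machinery together with the two characterisations already proved, namely Theorem~\ref{thm:characterisation_designs} for $\usigma$-transitive sets and Theorem~\ref{thm:characterisation_cliques} for $\usigma$-cliques, and to extract the size bounds from a single Cauchy--Schwarz-type inequality relating the inner and dual distributions. Writing $H = H_\usigma$ and $N = |W|/|H|$ for the number of $\usigma$-tabloids, the key quantity to track is $\sum_{\ulambda \in C(\usigma)} a_\ulambda$, which by \eqref{eqn:def_inner_distribution_general} counts $\tfrac{1}{|Y|}$ times the number of pairs $(x,y) \in Y \times Y$ with $x^{-1}y$ fixing some fixed $\usigma$-tabloid, i.e.\ $x^{-1}y \in H$. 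First I would observe that for a fixed $\usigma$-tabloid $T_0$ with stabiliser $H$, the number of such pairs is $\sum_{y \in Y} |yH \cap Y|$, and that $Y$ being a $\usigma$-clique says exactly (by Theorem~\ref{thm:characterisation_cliques}, using that the identity class has $\ulambda(0)=(1^n)$) that each coset $yH$ meets $Y$ only in $y$ itself, so this sum equals $|Y|$ and hence $\sum_{\ulambda \in C(\usigma)} a_\ulambda = 1$.

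Next I would bring in the dual side. By a standard argument in the conjugacy class scheme — essentially the MacWilliams-type identity underlying Delsarte's theory, which follows from \eqref{eqn:def_inner_distribution_general}, \eqref{eqn:def_dual_distribution_general} and the fact that the $E_k$ are the spectral idempotents — the quantity $\sum_{\ulambda \in C(\usigma)} a_\ulambda$ can also be written as $\tfrac{1}{N}\sum_{\ulambda} a'_\ulambda \,\omega_\ulambda$ where $\omega_\ulambda \ge 0$ are the eigenvalues of the (suitably normalised) $\usigma$-tabloid adjacency operator, with $\omega_\ulambda$ equal to its maximal value precisely when $\usigma \to \ulambda$; the point is that the matrix $P$ of the permutation character $\xi_H$ on $W/H$ decomposes as in \eqref{eqn:decomp_matrix_permchar} with support exactly $\{k : \usigma \to \ulambda_k\}$. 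More concretely: applying \eqref{eqn:dual_distr_orthogonality_general} and Theorem~\ref{thm:colsp_incidence_matrix} to the incidence matrix $M$ of the action of $W$ on $\usigma$-tabloids, one gets that $\|M^\top \mathbbm{1}_Y\|^2 = \tfrac{|Y|^2}{|W|}\big(1 + \sum_{\usigma\to\ulambda,\ \ulambda(0)\ne(n)} a'_\ulambda\big)\cdot(\text{something})$, while on the other hand $\|M^\top \mathbbm{1}_Y\|^2 \ge \tfrac{1}{N}\|\mathbbm{1}_Y^\top M\mathbbm{1}_{\text{(tabloids)}}\|^2 = \tfrac{|Y|^2}{N}$ by Cauchy--Schwarz applied entrywise over the tabloids, with equality iff $M^\top\mathbbm{1}_Y$ is constant, i.e.\ iff $Y$ is $\usigma$-transitive. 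Combining the clique hypothesis (which makes $\|M^\top\mathbbm{1}_Y\|^2$ exactly the number of ordered pairs $(x,y)$ in $Y^2$ with $x^{-1}y$ fixing a common tabloid, and a clique forces each such $y$ to lie in a unique preimage so this is $\le |Y|\cdot c$ with $c = |W|/N|Y|$ computed as in the proof of Theorem~\ref{thm:general_method}) with the Cauchy--Schwarz lower bound yields $|Y| \le N$, with equality iff transitivity holds; that is part~(a).

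For part~(b) I would run the same inequality from the transitive side: if $Y$ is $\usigma$-transitive then $M^\top \mathbbm{1}_Y = \tfrac{|Y|}{|W|} M^\top \mathbbm{1}_W$ is constant (this is exactly the computation in the proof of Theorem~\ref{thm:general_method}), so $\|M^\top\mathbbm{1}_Y\|^2 = |Y|^2/N$; but the left side is also $\sum_{(x,y)\in Y^2} [\,x^{-1}y \text{ fixes a common tabloid}\,] \ge |Y|$ since each diagonal pair contributes, and this is $= |Y|$ iff no off-diagonal pair contributes, i.e.\ iff $Y$ is a $\usigma$-clique (Theorem~\ref{thm:characterisation_cliques} again); hence $|Y|^2/N \ge |Y|$, giving $|Y| \ge N$ with equality iff $Y$ is a clique. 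Finally, in the equality case $|Y| = N = |W|/|H|$ the constant $c$ from Theorem~\ref{thm:general_method} (number of $g \in Y$ mapping one tabloid to another) equals $\tfrac{|Y|}{|W|}\cdot\tfrac{|W|}{N} = 1$, so $Y$ is sharply $\usigma$-transitive. The main obstacle I anticipate is bookkeeping the normalisations cleanly — making sure the counting of pairs $(x,y)$ with $x^{-1}y \in wH w^{-1}$ for \emph{some} $w$ matches $\sum_{\ulambda\in C(\usigma)} a_\ulambda$ and simultaneously matches $\|M^\top\mathbbm{1}_Y\|^2$ up to the right power of $|Y|$ and $N$ — rather than any conceptual difficulty; once the two expressions for $\|M^\top\mathbbm{1}_Y\|^2$ (the combinatorial pair count and the spectral $|Y|^2/N + \text{nonnegative})$ are in hand, both inequalities and all the equality cases drop out in one stroke.
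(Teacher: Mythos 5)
Your route is genuinely different from the paper's, and it is viable: comparing two evaluations of $\norm{M^\top\mathbbm{1}_Y}^2$ for the incidence matrix of the action on $\usigma$-tabloids is essentially the clique--coclique bound, which the paper itself notes (after the theorem) as an alternative derivation. The paper's own proof is far more elementary and uses none of the scheme machinery: for $H=H_\usigma$ one double-counts $\sum_{w\in W}\abs{Y\cap wH}=\abs{Y}\,\abs{H}$ (each $(x,y)\in H\times Y$ corresponds to the unique $w=yx^{-1}$); each summand is at most $1$ when $Y$ is a $\usigma$-clique and at least $1$ when $Y$ is $\usigma$-transitive (since $wH$ is exactly the set of elements mapping the tabloid stabilised by $H$ to its image under $w$), and equality forces $\abs{Y\cap w\widetilde{H}}=1$ for every $w$ and every stabiliser $\widetilde{H}$, which is sharp transitivity.

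However, your normalisations as written do not combine to give $\abs{Y}\le \abs{W}/\abs{H}$, so they must be repaired before the argument closes. Writing $N=\abs{W}/\abs{H}$ for the number of tabloids: the columns of $M$ are indexed by the $N^2$ ordered pairs of tabloids and $\mathbbm{1}_Y^\top M\,\mathbbm{1}=\abs{Y}N$, so Cauchy--Schwarz gives $\norm{M^\top\mathbbm{1}_Y}^2\ge(\abs{Y}N)^2/N^2=\abs{Y}^2$, not $\abs{Y}^2/N$, with equality precisely when $Y$ is $\usigma$-transitive; on the combinatorial side $\norm{M^\top\mathbbm{1}_Y}^2=\sum_{x,y\in Y}\abs{\operatorname{Fix}(x^{-1}y)}$ counts triples (pair, common fixed tabloid), so for a $\usigma$-clique it equals $\abs{Y}\cdot N$ (each diagonal pair contributes all $N$ tabloids), not ``the number of pairs'' and not ``$\le\abs{Y}\cdot c$ with $c=\abs{W}/(N\abs{Y})$''. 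With these corrections, the clique case gives $\abs{Y}^2\le\abs{Y}N$ and the transitive case gives $\abs{Y}^2\ge\abs{Y}N$, and all equality statements, including sharpness, drop out as you intend. Two further slips: $\sum_{\ulambda\in C(\usigma)}a_\ulambda$ counts pairs with $x^{-1}y$ conjugate into $H$, i.e.\ fixing \emph{some} tabloid, not pairs with $x^{-1}y\in H$; and the spectral identity with the unspecified factor is both wrong as stated (the coefficients $m_kc_k^{-1}$ in \eqref{eqn:decomp_matrix_permchar} vary with $k$, so no single constant can be pulled out) and unnecessary, since the direct Cauchy--Schwarz argument already yields everything you need.
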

\begin{proof}
Observe that for each $(x,y) \in H \times Y$ there is precisely one element $w \in W$ with $wx = y$ (namely $w = yx^{-1}$). Thus, we have
\begin{equation}
	\sum_{w \in W} \left|Y \cap wH \right| = |Y| \cdot |H|. \label{eqn:sum_clique_transitive}
\end{equation}
The quotient of any two elements in $Y \cap wH$ fixes a $\usigma$-tabloid. Hence, if $Y$ is a $\usigma$-clique, then every summand on the left hand side of \eqref{eqn:sum_clique_transitive} is at most 1. This gives $|Y| \cdot |H| \leq |W|$, proving the bound in (a). Now, since $H$ is the stabiliser of a $\usigma$-tabloid $T$, the coset $w H$ contains precisely the elements of $W$ that map $T$ to $wT$. Thus, if $Y$ is $\usigma$-transitive, then every summand on the left hand side of \eqref{eqn:sum_clique_transitive} is at least 1. This gives $|Y| \cdot |H| \geq |W|$, proving the bound in (b).

In both cases, equality occurs if and only if $|Y \cap w \widetilde{H}| = 1$ for every $w \in W$ and every stabiliser $\widetilde{H}$ of a $\usigma$-tabloid. This is equivalent to $Y$ being sharply $\usigma$-transitive.
\end{proof}
Theorem~\ref{thm:clique_design_bound} can also be obtained using the so-called clique-coclique bound (see \cite[Theorem 3.9]{Del1973}).
%%%%%%%%%%%%%%%%%%%%%%%%%%%%%%%%%%%%%%%%%%%%%%%%%%%%%%%%%%%%%%%%%

\section{Comparison of transitivity types}\label{sec:livingstone wagner}

We now turn to generalisations of Theorem \ref{thm:livingstone wagner} by Livingstone and Wagner. Let $\usigma,\utau\in\Sigma_n(G)$. We write $\usigma\succeq\utau$ if for every subset $Y$ of $W$ we have
\[
\text{$Y$ is $\usigma$-transitive $\;\Longrightarrow\;$ $Y$ is $\utau$-transitive}.
\]

Hence, writing
\[
M_{\usigma}=\{\ulambda\in\Lambda_n(G):\usigma\to\ulambda\},
\]
Theorem~\ref{thm:characterisation_designs} implies
\[
\usigma\succeq\utau\;\Longleftrightarrow\; M_{\usigma}\supseteq M_{\utau}.
\]

We shall give a characterisation of the relation $\succeq$ in special cases. We say that $\usigma\in\Sigma_n(G)$ is \emph{simple} if $\usigma(U)=\emptyset$ for each non-trivial proper subgroup~$U$ of $G$. If $|G| > 1$, then the \emph{shape} of a simple $\usigma\in\Sigma_n(G)$ is the pair $(\sigma,\rho)$, where $\usigma(\{0\})=\sigma$ and $\usigma(G)=\rho$. In order to fit the case $|G|=1$ into this framework, we define the shape of a partition $\sigma \in \Sigma_n(\{0\})$ to be $(\sigma,\emptyset)$. We shall identify a simple $\usigma\in\Sigma_n(G)$ with its shape. We define the \emph{size} of a double partition $(\sigma,\rho)$ to be $\abs{\sigma}+\abs{\rho}$. For double partitions $(\sigma,\rho)$ and $(\tau,\pi)$ of the same size $n$, we write $(\sigma,\rho)\succeq(\tau,\pi)$ if their corresponding symbols in $\Sigma_n(G)$ are related in this way. Note that if $G$ has prime order, then simplicity of $\usigma\in\Sigma_n(G)$ is no restriction.

We call $\usigma \in \Sigma_n(G)$ \emph{parabolic} if it is simple of shape $(\sigma,(k))$ for a non-negative integer~$k$ (the case $k=0$ should be read as $(\sigma,\emptyset)$). We write $(\sigma,k)$ for the shape $(\sigma,(k))$. The term "parabolic" stems from the fact that if $\usigma$ is parabolic in the hyperoctahedral group, the case $G = C_2$, then the corresponding stabiliser $H_{\usigma}$ is a parabolic subgroup if $W$ is viewed as a Coxeter group. It turns out that parabolic $\usigma$ are especially nice to work with and capture important transitivity types like $t$-transitivity for $\usigma = ((1^t),n-t)$ and $t$-homogeneity for $\usigma = ((t),n-t)$.

We start with some simple yet helpful properties of the relation $\succeq$.
\begin{proposition}
\label{pro:necessary_conditions}
Let $(\sigma,\rho)$ and $(\tau,\pi)$ be two double partitions of the same size satisfying $(\sigma,\rho)\succeq(\tau,\pi)$. Then we have
\begin{enumerate}[label=\textup{(}\alph*\textup{)}]
\item{
	$\rho\unlhd\pi$,
}
\item{
	$\sigma'\unrhd\tau'$,
}
\item{
	$\sigma\cup\rho\unlhd\tau\cup\pi$.
}
\end{enumerate}
\end{proposition}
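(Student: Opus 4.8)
The plan is to exploit the characterisation $\usigma \succeq \utau \Longleftrightarrow M_{\usigma} \supseteq M_{\utau}$ together with Theorem~\ref{thm:decomposition_perm_char}, and to prove each of the three conditions by exhibiting a specific $\ulambda \in M_{\utau}$ which, under the hypothesis, must also lie in $M_{\usigma}$, and then reading off what the relation $\usigma \to \ulambda$ forces. The point is that $\usigma = (\sigma,\rho)$ is simple, so the only Young diagrams to colour are $\usigma(\{0\}) = \sigma$, whose boxes may receive any element of $G$, and $\usigma(G) = \rho$, whose boxes must all receive $0$ (since $G^\circ = \{0\}$); likewise for $\utau = (\tau,\pi)$.

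\textbf{Part (c).} Starting from $\utau = (\tau,\pi)$, colour every box with $0$. This is a legal colouring (the boxes of $\utau(G) = \pi$ must be coloured $0$, and boxes of $\utau(\{0\}) = \tau$ may be). The resulting $\umu \in \Lambda_n(G)$ has $\umu(0)$ equal to the multiset of row lengths of $\tau$ together with the row lengths of $\pi$, i.e.\ $\umu(0) = \tau \cup \pi$, and $\umu(g) = \emptyset$ for $g \ne 0$. Hence $\utau \to \ulambda$ where $\ulambda(0) = \tau \cup \pi$ and $\ulambda(g) = \emptyset$ otherwise, so $\ulambda \in M_{\utau} \subseteq M_{\usigma}$. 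Now $\usigma \to \ulambda$ means there is a colouring $\unu$ of $\usigma$ with $\unu(g) \unlhd \ulambda(g)$ for all $g$; since $\ulambda(g) = \emptyset$ for $g \ne 0$, every box of $\usigma$ must be coloured $0$, forcing $\unu(0) = \sigma \cup \rho$, and then $\sigma \cup \rho = \unu(0) \unlhd \ulambda(0) = \tau \cup \pi$. This gives (c).

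\textbf{Part (a).} Colour the boxes of $\utau(\{0\}) = \tau$ so that no two boxes in the same row share a colour and distinct rows use disjoint colour sets — this is possible as long as $|G| \geq \tau_1$; if $|G|$ is too small one instead pushes $\tau$ into the dominance-order slack, but the cleanest route is: choose the colouring of $\utau$ that sends each row of $\tau$ to its own box in a single auxiliary colour $g_1 \ne 0$ when $|G|>1$ (recall $\{0\}^\circ = G$), and sends $\pi$ to $0$. Then the resulting $\umu$ has $\umu(0) = \pi$. So $\ulambda$ with $\ulambda(0) = \pi$, $\ulambda(g_1) = \tau$, all others empty, lies in $M_{\utau} \subseteq M_{\usigma}$. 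In any colouring $\unu$ of $\usigma$ witnessing $\usigma \to \ulambda$, the boxes of $\usigma(G) = \rho$ are all coloured $0$ and contribute their row lengths to $\unu(0)$, so $\rho \unlhd \unu(0) \unlhd \ulambda(0) = \pi$, giving (a). (When $|G| = 1$ both $\rho$ and $\pi$ are $\emptyset$ and (a) is vacuous.)

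\textbf{Part (b).} This is the one I expect to be the main obstacle, because it concerns the $\{0\}$-component and the transpose, which does not interact as transparently with the colouring procedure. The idea: take the colouring of $\utau$ that colours the boxes of $\tau$ columnwise — the $j$-th box of each row gets a colour depending only on $j$ (legal since within a row all colours differ, and we may reuse colours across rows) — and colours $\pi$ by $0$. Reading off $\umu$: for a fixed colour $g$ used on column $j$, the rows of $\tau$ that are long enough to reach column $j$ each contribute a part of size one (if $g$ appears once per such row) so $\umu(g)$ is a column of height $\tau'_j$, i.e.\ $\umu(g) = (1^{\tau'_j})$. Choosing the colours on distinct columns to be distinct (possible when $|G| > $ number of columns $= \tau_1$; otherwise combine columns and argue via dominance) yields $\ulambda$ with $\ulambda$-components the single columns $(1^{\tau'_1}), (1^{\tau'_2}), \dots$ distributed among the nonzero colours, plus $\ulambda(0) = \pi$. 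Since $\ulambda \in M_{\usigma}$, a witnessing colouring $\unu$ of $\usigma$ must realise components dominated by these; because each target nonzero component is a single column $(1^m)$, the only partition dominated by it is $(1^m)$ itself, so $\unu(g) = (1^{\tau'_j})$ exactly, and counting boxes of colour $g$ in $\sigma$ (the boxes of $\rho$ being colour $0$) shows each column of $\sigma$ of height $\geq$ something maps appropriately, yielding $\tau'_j \leq \sigma'_j$ for all $j$, i.e.\ $\sigma' \unrhd \tau'$. The delicate point is handling the case $|G|$ small, where one cannot give every column its own colour; there the argument needs to be run with several columns sharing a colour and then invoking the "move boxes up" description of $\unlhd$ to extract the partial-sum inequalities $\sum_{i\le k}\tau'_i \le \sum_{i\le k}\sigma'_i$ one $k$ at a time. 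I expect this bookkeeping — choosing colourings that isolate exactly the partial sums one wants while staying legal for small $|G|$ — to be the crux of the proof.
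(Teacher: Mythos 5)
Parts (a) and (c) of your proposal are correct and coincide with the paper's argument: for (c) the all-zero colouring of $(\tau,\pi)$ forcing $\sigma\cup\rho\unlhd\tau\cup\pi$, and for (a) the element $\ulambda$ with $\ulambda(0)=\pi$ and $\ulambda(g_1)=\tau$ for a single nonzero $g_1$, from which $\rho\unlhd\unu(0)\unlhd\pi$ follows exactly as you say.

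Part (b), however, has a genuine gap. Your columnwise colouring needs a distinct nonzero colour for each column of $\tau$, i.e.\ $\abs{G}>\tau_1$, which fails in the cases of main interest (already for the hyperoctahedral group $G=C_2$ whenever $\tau$ has a row of length at least $2$), and you explicitly leave the small-$\abs{G}$ bookkeeping unresolved, calling it the crux. Even in the large-$\abs{G}$ case, your conclusion ``$\tau'_j\le\sigma'_j$ for all $j$'' does not follow directly from $\unu(g_j)=(1^{\tau'_j})$ one colour at a time; one must count all boxes of colours $g_1,\dots,g_k$ together (at most $\min(\sigma_i,k)$ per row of $\sigma$) to get the partial-sum inequalities. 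The paper avoids all of this: it uses for (b) the \emph{same} $\ulambda$ you already introduced in (a), namely $\ulambda(0)=\pi$ and $\ulambda(g)=\tau$ for one fixed nonzero $g$. In any colouring $\umu$ of $(\sigma,\rho)$ witnessing $(\sigma,\rho)\to\ulambda$, every $g$-coloured box lies in $\sigma$ and each row of $\sigma$ contributes at most one part to $\umu(g)$, of size at most that row's length; hence for every $k$ the number of boxes in the first $k$ columns of $\umu(g)$ is at most the number of boxes in the first $k$ columns of $\sigma$, i.e.\ $\sigma'\unrhd\umu(g)'$. Since $\umu(g)\unlhd\ulambda(g)=\tau$ and the total size $n$ forces $\abs{\umu(g)}=\abs{\tau}$, conjugation gives $\umu(g)'\unrhd\tau'$, so $\sigma'\unrhd\tau'$ with no restriction on $\abs{G}$. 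Replacing your part (b) by this observation closes the gap.
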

\begin{proof}
To prove (a) and (b), consider $\ulambda\in\Lambda_n(G)$ given by $\ulambda(0)=\pi$ and $\ulambda(g)=\tau$ for some fixed non-zero $g\in G$ (in the case $G = \{0\}$, we have $\pi=\emptyset$ and use $\ulambda\in \Lambda_n(\{0\})$ with $\ulambda(0) = \tau$). Then we have $(\tau,\pi)\to\ulambda$, and since $(\sigma,\rho)\succeq(\tau,\pi)$, we must also have $(\sigma,\rho)\to\ulambda$. In other words, there must be a colouring of $(\sigma,\rho)$ giving rise to $\umu \in \Lambda_n(G)$ with $\umu(g) \unlhd \ulambda(g)$ for every $g \in G$.

Let $\umu\in\Lambda_n(G)$ be obtained from such a colouring of $(\sigma,\rho)$. The parts of~$\rho$ must occur as parts in $\umu(0)$. Hence, $\rho\subseteq \umu(0)\unlhd\ulambda(0) = \pi$, and so $\rho\unlhd\pi$, proving~(a).

Now consider the partition $\umu(g)$ for the fixed non-zero $g \in G$ from above. Only boxes of $\sigma$ can be coloured with $g$, so all boxes of $\umu(g)$ have to come from $\sigma$. Notice that for every $k$, the number of boxes in the first $k$ columns of $\umu(g)$ cannot be larger than the number of boxes in the first $k$ columns of $\sigma$. Hence $\sigma'\unrhd\umu(g)' \unrhd \ulambda(g)' = \tau'$ and so $\sigma'\unrhd\tau'$, proving~(b).

To prove (c), consider $\ulambda\in\Lambda_n(G)$ given by $\ulambda(0)=\tau\cup \pi$. Then we have $(\tau,\pi)\to\ulambda$ and so $(\sigma,\rho)\to\ulambda$. The only way to achieve this is to colour every box of $(\sigma,\rho)$ with the element $0$. This forces $\sigma\cup \rho\unlhd\tau\cup \pi$.
\end{proof}

\begin{example}\label{ex:not sufficient}
The following example shows that the conditions (a), (b), (c) in Proposition~\ref{pro:necessary_conditions} are in general not sufficient to guarantee $(\sigma,\rho)\succeq(\tau,\pi)$. Take
\[
	(\sigma,\rho)=((311),(22)) \; \text{ and } \; (\tau,\pi)=((22),(311)).
\]
Then, the conditions of Proposition~\ref{pro:necessary_conditions} are satisfied.

Let $\ulambda\in\Lambda_9(G)$ be given by $\ulambda(0)=(31^4)$ and $\ulambda(g)=(1^2)$ for some fixed non-zero $g\in G$. Then $(\tau,\pi)\to\ulambda$ holds, but $(\sigma,\rho)\to\ulambda$ does not hold. Hence, $(\sigma,\rho)\succeq(\tau,\pi)$ is not true.

Notice however that in the case $G=\{0\}$, $G \wr S_n$ is the symmetric group and all three conditions in Proposition~\ref{pro:necessary_conditions} reduce to $\sigma \unlhd \tau$. This is sufficient to guarantee that a $\sigma$-transitive set is also $\tau$-transitive by Theorem~\ref{thm:lambda transitivity Livingstone Wagner}.
\end{example}

We shall however prove that the conditions (a) and (c) in Proposition~\ref{pro:necessary_conditions} are sufficient to guarantee $(\sigma,\rho)\succeq(\tau,\pi)$ in the parabolic case.

Let us first investigate how the relation $\succeq$ behaves when changing the shape $(\sigma,\rho)$ slightly.
\begin{lemma}[Dominance rule]\label{lem:dominance rule}
Let $(\sigma,\rho)$ and $(\tau,\pi)$ be two double partitions of the same size satisfying $\abs{\sigma}=\abs{\tau}$, $\sigma \unlhd\tau$ and $\rho\unlhd \pi$. Then we have $(\sigma,\rho) \succeq (\tau,\pi)$.
\end{lemma}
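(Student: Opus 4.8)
The plan is to prove the dominance rule by combining the characterisation of $\succeq$ via the sets $M_{\usigma}$ with the explicit description of the relation $\to$ from Definition~\ref{def:arrow_order}. By the remark following the definition of $\succeq$, it suffices to show $M_{(\sigma,\rho)} \supseteq M_{(\tau,\pi)}$, i.e.\ that $(\tau,\pi) \to \ulambda$ implies $(\sigma,\rho) \to \ulambda$ for every $\ulambda \in \Lambda_n(G)$. Since $\usigma$ here is simple, the only subgroups of $G$ with nonempty associated partition are $\{0\}$ (with $\{0\}^\circ = G$) and $G$ (with $G^\circ = \{0\}$), so a colouring of $(\tau,\pi)$ amounts to colouring each box of $\tau$ with an arbitrary element of $G$, while every box of $\pi$ is forced to receive the colour $0$. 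The resulting $\umu \in \Lambda_n(G)$ has $\umu(0)$ built from the rows of $\pi$ together with those parts of $\tau$-rows coloured $0$, and $\umu(g)$ (for $g \ne 0$) built from the parts of $\tau$-rows coloured $g$.

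The key step is to turn a valid colouring of $(\tau,\pi)$ into a valid colouring of $(\sigma,\rho)$ whose associated $\umu'$ still satisfies $\umu'(g) \unlhd \ulambda(g)$ for all $g$. Here I would exploit the hypotheses $\sigma \unlhd \tau$ (with $|\sigma| = |\tau|$) and $\rho \unlhd \pi$ separately, since the colouring of the $\sigma$-part and the $\pi$-part decouple except for the colour $0$. For the nonzero colours $g \ne 0$: given the colouring of $\tau$, look at the multiset of part-sizes contributed to $\umu(g)$. Using the fact that $\sigma \unlhd \tau$ means the Young diagram of $\sigma$ is obtained from that of $\tau$ by moving boxes up one at a time (equivalently, $\sigma$ is a ``merge/shrink'' of $\tau$'s row structure), I would produce a colouring of $\sigma$ in which, for each $g \ne 0$, the part-sizes contributed to $\umu'(g)$ are dominated by those of $\umu(g)$ — intuitively, moving a box from the end of a lower row to a higher row can only merge or shrink the $g$-coloured segments, so $\umu'(g) \unlhd \umu(g) \unlhd \ulambda(g)$. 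For the colour $0$: the parts contributed to $\umu'(0)$ come from $\rho$ (forced) plus the $0$-coloured $\sigma$-segments, versus $\pi$ plus the $0$-coloured $\tau$-segments for $\umu(0)$; since $\rho \unlhd \pi$ and the $\sigma$-contribution is again dominated by the $\tau$-contribution, and since $\unlhd$ is compatible with $\cup$ (adding dominated multisets of parts preserves dominance), we get $\umu'(0) \unlhd \umu(0) \unlhd \ulambda(0)$. Hence $(\sigma,\rho)\to\ulambda$.

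The main obstacle is making the box-moving argument rigorous: I need a clean lemma saying that if $\sigma \unlhd \tau$ with $|\sigma|=|\tau|$ and we are given any assignment of colours to the cells of $\tau$ (thought of row by row), then there is an assignment of colours to the cells of $\sigma$ so that, for every colour $c$, the partition of $c$-run-lengths read off from $\sigma$'s rows is dominated by that read off from $\tau$'s rows. The subtlety is that within a row the ``position of the elements does not matter, only how often'' each colour appears (as Definition~\ref{def:arrow_order} stresses), so each $\tau$-row of length $m$ coloured with colours having multiplicities $(m_c)_{c}$ contributes one part of size $m_c$ to $\umu(c)$ for each $c$ with $m_c > 0$. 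I would reduce to a single elementary move — transforming $\tau$ into $\tau'$ by taking one box off the end of row $j$ and appending it to row $i < j$ — and show directly that the colour-multiset of row $i$ grows by one box and that of row $j$ shrinks by one box, from which a routine check gives $\umu'(c) \unlhd \umu(c)$ for each $c$; iterating over the sequence of such moves connecting $\sigma$ to $\tau$ completes the argument. One should also handle the bookkeeping that total sizes match, so no colour is ``lost,'' which is automatic since $|\sigma| = |\tau|$ and $|\rho| = |\pi|$.
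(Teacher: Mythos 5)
Your overall strategy is the same as the paper's: reduce to showing that $(\tau,\pi)\to\ulambda$ implies $(\sigma,\rho)\to\ulambda$, handle $\rho\unlhd\pi$ by keeping the colouring of the unbarred part and using that union with a fixed partition preserves dominance, and handle $\sigma\unlhd\tau$ by reducing to a single elementary box move (the paper just organises this as two steps, $(\sigma,\rho)\succeq(\sigma,\pi)\succeq(\tau,\pi)$). However, there is a genuine gap precisely at the step you flag as the main obstacle, and your proposed fix does not close it. In the single-move situation $\tau_i=\sigma_i+1$, $\tau_j=\sigma_j-1$ ($i<j$), you must pass from a colouring of $\tau$ to a colouring of $\sigma$, i.e.\ transfer one box's worth of colour from row $i$ to row $j$; your claim that ``a routine check gives $\umu'(c)\unlhd\umu(c)$ for each $c$'' (and the earlier intuition that moving a box ``can only merge or shrink the $g$-coloured segments'') is false if the transferred box's colour is chosen arbitrarily. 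Concretely, take $\sigma=(2,2)$, $\tau=(3,1)$, and colour row $1$ of $\tau$ with $g,g,h$ and row $2$ with $h$, so $\umu(g)=(2)$ and $\umu(h)=(1,1)$. If you transfer the $h$-box down, the new colouring of $\sigma$ gives $\umu'(h)=(2)$, and $(2)\not\unlhd(1,1)$, so dominance fails for the colour $h$ even though it holds for $g$.

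The missing idea is a selection argument: since $\sigma_i\ge\sigma_j$ forces $\tau_i\ge\tau_j+2$, the pigeonhole principle yields a colour $g$ occurring \emph{strictly} more often in row $i$ of $\tau$ than in row $j$, and one must transfer a box of exactly this colour. Then only the partition for $g$ changes, its parts $a>b$ (counts in rows $i$ and $j$) being replaced by $a-1$ and $b+1$, which indeed gives a dominated partition; all other colours are untouched, and the argument iterates over the chain of single moves. This choice of $g$ is the heart of the paper's proof of the second step, and without it your reduction to elementary moves does not go through. (A minor further point: your elementary move is phrased as moving a box \emph{up}, turning $\tau$ into a larger diagram $\tau'$, whereas the iteration has to descend from a colouring of the dominating diagram $\tau$ to one of $\sigma$; this is easily repaired, but the colour-selection issue above is essential.)
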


\begin{proof}
Since $\succeq$ is a transitive relation, we can show $(\sigma,\rho) \succeq (\tau,\pi)$ in two steps via $(\sigma,\rho) \succeq (\sigma,\pi) \succeq (\tau,\pi)$.\par
First, we show that $(\sigma,\rho) \succeq (\sigma,\pi)$. Consider $\ulambda\in\Lambda_n(G)$ with $(\sigma,\pi) \to \ulambda$. We have to show $(\sigma,\rho) \to \ulambda$. According to Definition~\ref{def:arrow_order}, there is $\ubeta\in\Lambda_n(G)$ arising from a colouring $C$ of the boxes of $(\sigma,\pi)$ such that $\ubeta(g)\unlhd\ulambda(g)$ for all $g\in G$. We need to find a colouring $\widetilde{C}$ of $(\sigma,\rho)$ in step (1) of Definition~\ref{def:arrow_order} such that we obtain $(\sigma,\rho) \to \ulambda$.

Define $\widetilde{C}$ by colouring the boxes of $\sigma$ in $(\sigma,\rho)$ in the same way as in the colouring $C$ of $(\sigma,\pi)$. Colour all boxes of $\rho$ with $0$ (this is forced because of $G^\circ = \{0\}$). This assignment defines a colouring $\widetilde{C}$ of $(\sigma,\rho)$ which gives $\underline{\gamma} \in \Lambda_n(G)$ with $\underline{\gamma}(g) = \ubeta(g)$ for $g \neq 0$. Moreover, writing $\alpha$ for the partition of boxes of $\sigma$ that are coloured with $0$, we have $\underline{\gamma}(0) = \alpha \cup \rho$ and $\ubeta(0) = \alpha \cup \pi$. Since we have $\rho \unlhd \pi$, we obtain $\underline{\gamma}(0) \unlhd \ubeta(0)$. Hence, we find that $\underline{\gamma}(g) \unlhd \ulambda(g)$ for every $g \in G$, proving $(\sigma,\rho) \to \ulambda$.

Now we show $(\sigma,\pi) \succeq (\tau,\pi)$. Consider $\ulambda\in\Lambda_n(G)$ with $(\tau,\pi) \to \ulambda$. We have to show $(\sigma,\pi) \to \ulambda$. Since $\sigma \unlhd \tau$, the Young diagram of $\tau$ can be obtained from the Young diagram of $\sigma$ by moving up boxes one at a time. Since the relation $\succeq$ is transitive, it is enough to consider a single box that is moved up, say from row $j$ to row $i$. Thus, we can assume that $\tau_i = \sigma_i+1$, $\tau_j = \sigma_j-1$ and $\tau_k = \sigma_k$ for every $k \neq i,j$.

According to Definition~\ref{def:arrow_order}, there is $\ubeta\in\Lambda_n(G)$ arising from a colouring of the boxes of $(\tau,\pi)$ such that $\ubeta(g)\unlhd\ulambda(g)$ for all $g\in G$. Consider the colouring of the boxes of $\tau$ in row $i$ and row $j$. Since $\sigma_i \geq \sigma_j$, we have $\tau_i \geq \tau_j + 2$. Thus, there exists an element $g \in G$ such that in row $i$ of $\tau$ there are more boxes coloured with $g$ than in row $j$ of $\tau$. Because the actual position of the colours in a row does not matter, we can assume without loss of generality that the boxes at the end of row $i$ are coloured with $g$.

Now define a colouring of $(\sigma,\pi)$ by copying the colouring of the boxes of $\tau$. In other words, the box that was moved from row $j$ to row $i$ to obtain $\tau$ from $\sigma$ is coloured with $g$ and all other boxes of $\sigma$ are coloured in the same way that they were coloured in $\tau$. All boxes of $\pi$ have to be coloured with $0$. This colouring of $(\sigma,\pi)$ gives $\underline{\gamma} \in \Lambda_n(G)$ with $\underline{\gamma}(h) = \ubeta(h)$ for every $h \neq g$.

Now consider $\ubeta(g)$ and $\underline{\gamma}(g)$. Let $x$ denote the number of times that $g$ appears in row $j$ of $\tau$ in the colouring giving rise to $\ubeta$. By the choice of $g$, we find that $g$ appears $x+k$ times in row $i$ of $\tau$ for some $k \geq 1$. Thus, $\ubeta(g)$ contains the parts $x$ and $x+k$. By construction of the colouring of $(\sigma,\pi)$, $\underline{\gamma}(g)$ contains the parts $x+1$ and $x+k-1$. Since all other parts of $\ubeta(g)$ and $\underline{\gamma}(g)$ are the same, we get $\underline{\gamma}(g) \unlhd \ubeta(g)$. Hence, we find that $\underline{\gamma}(g) \unlhd \ulambda(g)$ for every $g \in G$, proving $(\sigma,\pi) \to \ulambda$.
\end{proof}

Proposition~\ref{pro:necessary_conditions} and Lemma~\ref{lem:dominance rule} immediately give a characterisation of the relation~$\succeq$ in the case $|\sigma| = |\tau|$.

\begin{corollary}
Let $(\sigma,\rho)$ and $(\tau,\pi)$ be two double partitions of the same size. If $|\sigma| = |\tau|$ $($thus $|\rho| = |\pi|)$, then
\[
	(\sigma,\rho) \succeq (\tau,\pi) \;\Longleftrightarrow\; \sigma \unlhd \tau \;\text{and}\; \rho \unlhd \pi.
\]
\end{corollary}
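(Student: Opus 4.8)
The corollary falls out immediately by combining the two preceding results, so my plan is simply to assemble them and take care of one small bookkeeping point about conjugate partitions.

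For the direction $(\Leftarrow)$, I would suppose $\sigma \unlhd \tau$ and $\rho \unlhd \pi$. Together with the standing hypothesis $|\sigma| = |\tau|$, these are precisely the hypotheses of the Dominance rule (Lemma~\ref{lem:dominance rule}), which then yields $(\sigma,\rho) \succeq (\tau,\pi)$ with no further work.

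For the direction $(\Rightarrow)$, I would suppose $(\sigma,\rho) \succeq (\tau,\pi)$. Proposition~\ref{pro:necessary_conditions}(a) gives $\rho \unlhd \pi$ directly, and Proposition~\ref{pro:necessary_conditions}(b) gives $\sigma' \unrhd \tau'$. Here is the single place where care is needed: part (b) is phrased in terms of conjugate partitions, so I would invoke the fact recorded in the Partitions subsection that $\lambda \unlhd \mu \Longleftrightarrow \lambda' \unrhd \mu'$ whenever $|\lambda| = |\mu|$. Since $|\sigma| = |\tau|$, the relation $\sigma' \unrhd \tau'$ is equivalent to $\sigma \unlhd \tau$, and we recover both desired dominances.

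There is no real obstacle: part (c) of Proposition~\ref{pro:necessary_conditions} is not needed, and the only subtlety is remembering to use the equal-size hypothesis to remove the transpose in part (b). Combining the two implications proves the stated equivalence.
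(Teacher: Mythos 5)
Your proof is correct and follows exactly the route the paper intends: the reverse direction is the Dominance rule (Lemma~\ref{lem:dominance rule}), and the forward direction combines parts (a) and (b) of Proposition~\ref{pro:necessary_conditions}, using $|\sigma|=|\tau|$ to convert $\sigma'\unrhd\tau'$ into $\sigma\unlhd\tau$. The paper states the corollary as an immediate consequence of these two results, so your write-up just makes the same argument explicit.
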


If $\sigma$ is a partition and $k$ and $r$ are positive integers, we denote with $\sigma +_r k$ the partition obtained by adding $k$ to the part $\sigma_r$ of $\sigma$ and then possibly reordering the parts to obtain a partition. In other words, we add the sequence $0^{r-1}k$ to the sequence~$\sigma$ and reorder $\sigma$ such that it becomes a partition.

The next lemma describes a way to move some boxes from the partition $\sigma$ to the partition $\rho$.

\begin{lemma}[Sum rule]\label{lem:sum rule}
Let $(\sigma,\rho)$ be a double partition satisfying $\sigma_r\leq\rho_s$ for some positive integers $r$ and $s$. Then, for each non-negative integer $k$, we have
\[
(\sigma +_r k,\rho) \succeq (\sigma,\rho +_s k).
\]
\end{lemma}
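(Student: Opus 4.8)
The plan is to invoke Theorem~\ref{thm:characterisation_designs} (equivalently, the equivalence $\usigma\succeq\utau\Leftrightarrow M_{\usigma}\supseteq M_{\utau}$ recorded in Section~\ref{sec:livingstone wagner}), which turns the claim into the purely combinatorial implication
\[
(\sigma,\rho+_s k)\to\ulambda \quad\Longrightarrow\quad (\sigma+_r k,\rho)\to\ulambda
\]
for every $\ulambda\in\Lambda_n(G)$. So I would take a colouring witnessing the left-hand side and transform it into one witnessing the right-hand side. Recall (as in the proof of Lemma~\ref{lem:dominance rule}) that a colouring of a simple symbol of shape $(\mu,\nu)$ in the sense of Definition~\ref{def:arrow_order} is just an arbitrary $G$-colouring of the boxes of $\mu$ (since $\{0\}^\circ=G$) together with the forced all-$0$ colouring of $\nu$ (since $G^\circ=\{0\}$); the resulting $\underline{\gamma}$ has $\underline{\gamma}(0)=\alpha\cup\nu$, where $\alpha$ is the partition recording, row by row of $\mu$, how many boxes were coloured $0$, and for $g\neq 0$, $\underline{\gamma}(g)$ records the number of $g$-coloured boxes in each row of $\mu$.

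Now fix $\ulambda$ with $(\sigma,\rho+_s k)\to\ulambda$, and fix a witnessing colouring; let $a$ be the number of boxes of row $r$ of $\sigma$ coloured $0$, so $0\le a\le\sigma_r$, and let $\alpha$ be the partition of $0$-coloured boxes of $\sigma$ this colouring produces. Then $\umu(0)=\alpha\cup(\rho+_s k)\unlhd\ulambda(0)$ and $\umu(g)\unlhd\ulambda(g)$ for $g\neq0$. I would now colour $(\sigma+_r k,\rho)$ by keeping the same colours on the boxes of $\sigma$, colouring the $k$ new boxes of row $r$ with $0$, and colouring all of $\rho$ with $0$ (forced). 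The reordering of parts implicit in $+_r$ and $+_s$ is cosmetic and affects none of the relevant multisets, so for each $g\neq0$ the row-count multiset is unchanged and the new $\umu'(g)$ equals $\umu(g)\unlhd\ulambda(g)$; for $g=0$ we get $\umu'(0)=\alpha'\cup\rho$, where $\alpha'$ is $\alpha$ with one part $a$ replaced by $a+k$.

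It remains to show $\alpha'\cup\rho\unlhd\alpha\cup(\rho+_s k)$, for then transitivity of $\unlhd$ gives $\umu'(0)\unlhd\ulambda(0)$ and hence $(\sigma+_r k,\rho)\to\ulambda$. The two partitions have the same size and agree except that $\alpha'\cup\rho$ contains the parts $a+k$ and $\rho_s$ where $\alpha\cup(\rho+_s k)$ contains $a$ and $\rho_s+k$. Since $a\le\sigma_r\le\rho_s$ (this is the only place the hypothesis $\sigma_r\le\rho_s$ is used), we are in the situation $a\le b$ with $b=\rho_s$, and the two-part partitions formed by $\{a+k,b\}$ and $\{a,b+k\}$ satisfy $\{a+k,b\}\unlhd\{a,b+k\}$: they have equal size $a+b+k$, so this is just the inequality $\max(a+k,b)\le b+k$ on first parts. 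Finally, adjoining a fixed partition to both sides preserves $\unlhd$ between equal-size partitions — immediate from $(\lambda\cup\kappa)'=\lambda'+\kappa'$, from $\lambda\unlhd\mu\Leftrightarrow\lambda'\unrhd\mu'$ for partitions of equal size, and from the fact that adding a fixed sequence componentwise respects $\unrhd$ — so $\alpha'\cup\rho\unlhd\alpha\cup(\rho+_s k)$. The case $k=0$ is trivial.

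I expect the only real obstacle to be bookkeeping: carefully tracking how the given colouring of $(\sigma,\rho+_s k)$ restricts to $\sigma$ and is transported to $\sigma+_r k$, and stating precisely the two one-line micro-facts ($\{a+k,b\}\unlhd\{a,b+k\}$ for $a\le b$, and "adjoining a partition preserves $\unlhd$"). There is no representation-theoretic content beyond Theorem~\ref{thm:characterisation_designs} and no genuinely hard combinatorics.
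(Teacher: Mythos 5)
Your proposal is correct and is essentially the paper's own proof: reduce via the $M_{\usigma}\supseteq M_{\utau}$ characterisation to the implication $(\sigma,\rho+_sk)\to\ulambda\Rightarrow(\sigma+_rk,\rho)\to\ulambda$, transfer the witnessing colouring by keeping the colours on $\sigma$ and colouring the $k$ new boxes (and all of $\rho$) with $0$, and compare the only changed parts $\{x+k,\rho_s\}$ versus $\{x,\rho_s+k\}$ using $x\le\sigma_r\le\rho_s$. The only cosmetic differences are that you compress the paper's case distinction ($x+k\le\rho_s$ vs.\ $x+k>\rho_s$) into the single inequality $\max(x+k,\rho_s)\le\rho_s+k$ and spell out the ``adjoining a fixed partition preserves $\unlhd$'' step that the paper uses implicitly.
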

\begin{proof}
Let $\ulambda\in\Lambda_n(G)$ with $(\sigma,\rho +_s k) \to \ulambda$. We have to show that $(\sigma+_rk,\rho) \to \ulambda$. According to Definition~\ref{def:arrow_order}, there is $\ubeta\in\Lambda_n(G)$ arising from a colouring $C$ of $(\sigma,\rho +_s k)$ such that $\ubeta(g)\unlhd\ulambda(g)$ for all $g\in G$.

Define a colouring $\widetilde{C}$ of $(\sigma+_r k,\rho)$ as follows. Colour all boxes of $\sigma+_r k$ that are also boxes of $\sigma$ in the same way they are coloured in the colouring $C$ of $(\sigma,\rho +_s k)$. Colour the $k$ remaining boxes (that is the boxes that were added to the part $\sigma_r$) and all boxes of $\rho$ with the element $0$ (the colouring of the boxes of~$\rho$ is forced). This colouring $\widetilde{C}$ of $(\sigma+_rk,\rho)$ gives $\underline{\gamma} \in \Lambda_n(G)$ with $\underline{\gamma}(g)=\ubeta(g)$ for all $g \neq 0$.

Note that in the part $\sigma_r + k$, there are $x+k$ boxes coloured with $0\in G$ for some integer~$x$ satisfying $0\le x\le\sigma_r$. Hence, $\ubeta(0)$ contains the parts $x$ and $\rho_s + k$ while $\underline{\gamma}(0)$ contains the parts $x+k$ and $\rho_s$. We now distinguish the order of these parts.

If $x+k\le \rho_s$, then $(\rho_s,x+k)$ is a partition satisfying $(\rho_s,x+k)\unlhd(\rho_s+k,x)$. If $x+k>\rho_s$, then $(x+k,\rho_s)$ is a partition satisfying $(x+k,\rho_s)\unlhd (\rho_s+k,x)$ since $x\le \sigma_r\le \rho_s$. In both cases, we find that $\underline{\gamma}(0)\unlhd\ubeta(0)\unlhd\ulambda(0)$ and so $\underline{\gamma}(0)\unlhd\ulambda(0)$. Hence, we find that $\underline{\gamma}(g) \unlhd \ulambda(g)$ for every $g \in G$, proving $(\sigma +_r k,\rho) \to \ulambda$.
\end{proof}

Figure~\ref{fig:sum rule} illustrates the sum rule.

\begin{figure}[h]
\begin{center}
\begin{tikzpicture}[scale=0.6]
%draw left tableau
\draw [thick] (0,0) rectangle (1,1);
\draw [thick] (0,1) grid (2,2);
\draw [thick,fill = black!30!white] (1,2) rectangle (4,3);
\draw [thick] (0,2) grid (4,3);

%draw comma
\draw node at (4.5,2) {\Large ,};

%draw right tableau
\draw [thick] (5,1) grid (7,2);
\draw [thick] (5,2) grid (7,3);

%draw relation symbol
\draw node at (8,2) {\Large $\succeq$};

%draw left tableau
\draw [thick] (9,0) rectangle (10,1);
\draw [thick] (9,1) grid (10,2);
\draw [thick] (9,2) grid (11,3);

%draw comma
\draw node at (11.5,2) {\Large ,};

%draw right tableau
\draw [thick] (12,1) grid (14,2);
\draw [thick,fill = black!30!white] (14,2) rectangle (17,3);
\draw [thick] (12,2) grid (17,3);
\end{tikzpicture}
\end{center}
\vspace*{-0.5cm}
\caption{The sum rule}\label{fig:sum rule}
\end{figure}

We emphasise two special cases of the sum rule. The first rule says that we can move whole parts from the left entry of the double partition $(\sigma,\rho)$ to the right entry.
\begin{corollary}[Union rule]\label{cor:union_rule}
Let $(\sigma,\rho)$ be a double partition. Then for every partition $\alpha$ we have
\[
(\sigma\cup\alpha,\rho) \succeq (\sigma,\rho \cup \alpha).
\]
\end{corollary}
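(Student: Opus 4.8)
The plan is to deduce the Union rule from the Sum rule (Lemma~\ref{lem:sum rule}) by transporting the parts of $\alpha$ from the first component to the second component one at a time and invoking the transitivity of $\succeq$.

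First I would record the following consequence of the Sum rule: for \emph{any} double partition $(\sigma,\rho)$ and any nonnegative integer $k$,
\[
(\sigma\cup(k),\rho)\;\succeq\;(\sigma,\rho\cup(k)).
\]
Indeed, viewing partitions as eventually-zero sequences, choose a positive integer $r$ larger than the number of nonzero parts of $\sigma$ and a positive integer $s$ larger than the number of nonzero parts of $\rho$, so that $\sigma_r=0$ and $\rho_s=0$. Then $\sigma_r\le\rho_s$, so the hypothesis of Lemma~\ref{lem:sum rule} holds, and by the definition of the operation $+_r$ (adding the sequence $0^{r-1}k$ and reordering) we have $\sigma+_rk=\sigma\cup(k)$ and $\rho+_sk=\rho\cup(k)$. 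The Sum rule now gives exactly the displayed relation.

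Next I would iterate. If $\alpha=\emptyset$ there is nothing to prove, so list the nonzero parts of $\alpha$ as $\alpha_1,\dots,\alpha_m$ and, for $0\le i\le m$, set
\[
\sigma^{(i)}=\sigma\cup(\alpha_{i+1},\dots,\alpha_m),\qquad \rho^{(i)}=\rho\cup(\alpha_1,\dots,\alpha_i),
\]
so that $(\sigma^{(0)},\rho^{(0)})=(\sigma\cup\alpha,\rho)$ and $(\sigma^{(m)},\rho^{(m)})=(\sigma,\rho\cup\alpha)$. For each $i$ with $1\le i\le m$ we have $\sigma^{(i-1)}=\sigma^{(i)}\cup(\alpha_i)$ and $\rho^{(i)}=\rho^{(i-1)}\cup(\alpha_i)$, so applying the special case above to the double partition $(\sigma^{(i)},\rho^{(i-1)})$ with $k=\alpha_i$ yields
\[
(\sigma^{(i-1)},\rho^{(i-1)})\;\succeq\;(\sigma^{(i)},\rho^{(i)}).
\]
Since $\succeq$ is transitive, composing these $m$ relations gives $(\sigma\cup\alpha,\rho)\succeq(\sigma,\rho\cup\alpha)$, as desired.

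I do not expect any genuine obstacle here; the only point requiring a little care is the first step, namely that the Sum rule, which literally adds $k$ to the part indexed by a \emph{positive} integer $r$, also covers the case of appending a brand-new part. This is precisely what the ``in other words'' reformulation of $\sigma+_rk$ allows, by taking $r$ (and similarly $s$) beyond the support of the partition.
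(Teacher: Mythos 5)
Your proof is correct and follows essentially the same route as the paper: both reduce to a single part $\alpha=(k)$ via transitivity of $\succeq$ and then apply the sum rule with indices $r,s$ chosen beyond the supports so that $\sigma_r=\rho_s=0$, giving $\sigma+_rk=\sigma\cup(k)$ and $\rho+_sk=\rho\cup(k)$. Your explicit iteration over the parts of $\alpha$ just spells out what the paper compresses into ``it is enough to consider $\alpha=(k)$.''
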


\begin{proof}
Since the relation $\succeq$ is transitive, it is enough to consider $\alpha = (k)$ for some integer $k$. Take $r$ and $s$ in the sum rule (Lemma~\ref{lem:sum rule}) such that $\sigma_r = \rho_s=0$. Then $\sigma +_r k = \sigma \cup (k)$ and $\rho +_s k = \rho \cup (k)$.
\end{proof}

The second special case states that we can exchange a part from the left entry of the double partition $(\sigma,\rho)$ with a part from the right entry provided that the part on the left is bigger than the part on the right.

\begin{corollary}[Exchange rule]\label{cor:exchange_rule}
Let $(\sigma,\rho)$ be a double partition and let $i,j$ be integers with $\sigma_i > \rho_j$. Define the double partition $(\widetilde{\sigma},\widetilde{\rho})$ as the double partition $(\sigma,\rho)$ but with the part $\sigma_i$ of $\sigma$ and the part $\rho_j$ of $\rho$ exchanged (then possibly reordering the parts to obtain a partition). Then we have $(\sigma,\rho) \succeq (\widetilde{\sigma},\widetilde{\rho})$.
\end{corollary}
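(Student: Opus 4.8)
The plan is to deduce the exchange rule from a single application of the sum rule (Lemma~\ref{lem:sum rule}). Write $a=\sigma_i$ and $b=\rho_j$, and set $k=a-b$, which is a positive integer since $a>b$. By definition, $(\widetilde{\sigma},\widetilde{\rho})$ is obtained from $(\sigma,\rho)$ by replacing the part $a$ of $\sigma$ with $b$ and the part $b$ of $\rho$ with $a$ (and reordering each into a partition). Equivalently, $\sigma$ is recovered from $\widetilde{\sigma}$ by adding $k$ to the part $b$ that was just inserted, and $\widetilde{\rho}$ is obtained from $\rho$ by adding $k$ to the part $\rho_j=b$.

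The key step is then to apply Lemma~\ref{lem:sum rule} to the double partition $(\widetilde{\sigma},\rho)$: choose $r$ to be an index at which the part $b$ coming from the exchange sits in $\widetilde{\sigma}$, and choose $s=j$. Then $\widetilde{\sigma}_r=b=\rho_j=\rho_s$, so the hypothesis $\widetilde{\sigma}_r\le\rho_s$ of the sum rule holds (with equality, which is permitted). Moreover $\widetilde{\sigma}+_r k=\sigma$ and $\rho+_s k=\widetilde{\rho}$, since in each case adding $k$ to the designated part restores exactly the multiset of parts of $\sigma$, respectively produces the multiset of parts of $\widetilde{\rho}$. Hence the sum rule yields
\[
(\sigma,\rho)=(\widetilde{\sigma}+_r k,\rho)\;\succeq\;(\widetilde{\sigma},\rho+_s k)=(\widetilde{\sigma},\widetilde{\rho}),
\]
as claimed. (In the degenerate case $\rho_j=0$ this is simply the union rule, Corollary~\ref{cor:union_rule}, applied with $\alpha=(\sigma_i)$; the sum rule covers the general case uniformly.)

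I expect essentially no obstacle here: the only point needing a moment of care is verifying that the operations $+_r$ and $+_s$, which act on a specified part index and then reorder, genuinely reproduce $\sigma$ and $\widetilde{\rho}$. This is immediate because only the underlying multisets of parts matter for $\succeq$ and for these operations, and adding $a-b$ to a copy of $b$ turns it into a copy of $a$. If one preferred not to invoke the sum rule as a black box, the statement could alternatively be reproved directly from Definition~\ref{def:arrow_order} by the same colouring argument used in the proof of Lemma~\ref{lem:sum rule}, but that would merely duplicate work.
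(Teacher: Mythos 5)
Your proposal is correct and follows essentially the same route as the paper: the paper also derives the exchange rule from a single application of the sum rule with $k=\sigma_i-\rho_j$, viewing $\sigma_i=\rho_j+(\sigma_i-\rho_j)$, which is exactly your choice of $r$, $s=j$ and $k$. Your additional remarks (equality $\widetilde{\sigma}_r=\rho_s$ being allowed, and that only the multiset of parts matters) just make explicit what the paper leaves implicit.
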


\begin{proof}
Write $\sigma_i = \rho_j + (\sigma_i - \rho_j)$ and take $k=\sigma_i - \rho_j$ in the sum rule (Lemma~\ref{lem:sum rule}).
\end{proof}

It turns out that using the dominance rule and the sum rule, we can show that the necessary conditions from Proposition~\ref{pro:necessary_conditions} are actually sufficient for parabolic types. Hence, we can completely characterise the relation $\succeq$ for parabolic types.

Before we prove this, we define the union of a partition with a sequence of the form $(0^k,\ell-k)$ for some integers $0 \leq k \leq \ell$. This will be useful to neatly write down some conjugate partitions.

\begin{definition}\label{def:skew tableau union}
Let $\lambda$ be a partition and let $k,\ell \in \N$ with $0 \leq k \leq \ell$. Let $i \in \N$ be minimal such that $\lambda_i \leq k$ (such an $i$ always exists as $\lambda$ is an infinite sequence ending in zeroes). Then we define $\lambda \cup (0^k,\ell - k)$ to be the partition given by
\[
	\lambda \cup (0^k,\ell - k) = \begin{cases}
\lambda +_i (\ell -k), & \text{if $i=1$ or $ i\geq 2$ and $\ell \leq \lambda_{i-1}$ },\\
\lambda +_{i-1} (\ell - \lambda_{i-1}) +_i (\lambda_{i-1}-k), & \text{otherwise}.
\end{cases}
\]
\end{definition}

The operation of adding a part of the form $(0^k,\ell-k)$ to a partition corresponds to adding a box to the columns $k+1,k+2,\ldots,\ell$. If there is a gap between some boxes in a row, then the boxes are moved to the left to close the gap. Figure~\ref{fig:adding a part} gives two examples using the partitions $(111)\cup(0^2,3)=(411)$ and $(4311)\cup(0^2,3)=(5421)$. 
\begin{figure}[h]
\begin{center}
\begin{tikzpicture}[scale=0.6]
%draw left tableau
\draw [thick] (0,0) rectangle (1,1);
\draw [thick,fill = black!30!white] (2,-2) rectangle (5,-1);
\draw [thick] (0,1) grid (1,2);
\draw [dashed] (0,-2) grid (2,-1);
\draw [thick] (2,-2) grid (5,-1);
\draw [thick] (0,2) grid (1,3);

%draw arrow in partition
\draw [->,thick] (3.5,-0.5) -- (3.5,0.5);

%draw arrow symbol
\draw node at (6.5,1.5) {\Large $\rightsquigarrow$};

%draw middle tableau
\draw [thick] (8,0) rectangle (9,1);
\draw [thick,fill = black!30!white] (10,2) rectangle (13,3);
\draw [thick] (8,1) grid (9,2);
\draw [thick] (8,2) grid (9,3);
\draw [thick] (10,2) grid (13,3);

%draw arrow symbol
\draw node at (14.5,1.5) {\Large $\rightsquigarrow$};

%draw right tableau
\draw [thick] (16,0) rectangle (17,1);
\draw [thick,fill = black!30!white] (17,2) rectangle (20,3);
\draw [thick] (16,1) grid (17,2);
\draw [thick] (16,2) grid (20,3);
\end{tikzpicture}
\\
\vspace*{1cm}
\begin{tikzpicture}[scale=0.6]
%draw left tableau
\draw [thick] (0,0) rectangle (1,1);
\draw [thick,fill = black!30!white] (2,-2) rectangle (5,-1);
\draw [thick] (0,1) grid (1,2);
\draw [dashed] (0,-2) grid (2,-1);
\draw [thick] (2,-2) grid (5,-1);
\draw [thick] (0,2) grid (3,3);
\draw [thick] (0,3) grid (4,4);

%draw arrow in partition
\draw [->,thick] (3.5,-0.5) -- (3.5,0.5);

%draw arrow symbol
\draw node at (6.5,2) {\Large $\rightsquigarrow$};

%draw middle tableau
\draw [thick] (8,0) rectangle (9,1);
\draw [thick,fill = black!30!white] (12,3) rectangle (13,4);
\draw [thick,fill = black!30!white] (11,2) rectangle (12,3);
\draw [thick,fill = black!30!white] (10,1) rectangle (11,2);
\draw [thick] (8,1) grid (9,2);
\draw [thick] (8,2) grid (12,3);
\draw [thick] (8,3) grid (13,4);

%draw arrow symbol
\draw node at (14.5,2) {\Large $\rightsquigarrow$};

%draw right tableau
\draw [thick] (16,0) rectangle (17,1);
\draw [thick,fill = black!30!white] (19,3) rectangle (21,4);
\draw [thick] (16,1) grid (17,2);
\draw [thick,fill = black!30!white] (17,1) rectangle (18,2);
\draw [thick] (16,2) grid (20,3);
\draw [thick] (16,3) grid (21,4);
\end{tikzpicture}
\end{center}
\vspace*{-0.5cm}
\caption{The partitions $(111)\cup(0^2,3)$ and $(4311)\cup(0^2,3)$}\label{fig:adding a part}
\end{figure}

The reason for introducing this operation is the following lemma.

\begin{lemma}\label{lem:skew tableau union}
Let $n,k,\ell \in \N$ with $0 \leq k \leq \ell$ and let $\lambda \vdash n$ and $\mu \vdash n-(\ell-k)$ be partitions. Then we have
\[
	\lambda \unrhd \mu + 0^k 1^{\ell -k} \;\Longrightarrow \; \lambda' \unlhd \mu' \cup (0^k,\ell -k).
\]
\end{lemma}

\begin{proof}
Assume that $\lambda \unrhd \mu + 0^k 1^{\ell -k}$ and write $\widetilde{\mu} = \mu + 0^k 1^{\ell - k}$. In general, $\widetilde{\mu}$ is a composition. Notice that adding boxes to the rows of $\mu$ is equivalent to adding boxes to the columns of $\mu'$. Hence, if $\widetilde{\mu}$ is a partition, then we can simply take duals and deduce
\[
	\lambda' \unlhd \mu' \cup (0^k,\ell -k).
\]
We show that this is still true if $\widetilde{\mu}$ is not a partition. If $\widetilde{\mu}$ is not a partition, then there exists $i \in \N$ with $\widetilde{\mu}_i < \widetilde{\mu}_{i+1}$. We distinguish two cases.

If $\lambda_i < \widetilde{\mu}_{i+1}$, then we find $\lambda_{i+1} \leq \lambda_i < \widetilde{\mu}_{i+1}$. Because of $\lambda \unrhd \widetilde{\mu}$, we have
\[
	\lambda_1 + \ldots + \lambda_i \geq \widetilde{\mu}_1 + \ldots + \widetilde{\mu}_i.
\]
If this inequality was an equality, then by adding $\lambda_{i+1}$ to both sides it would follow that
\[
	\lambda_1 + \ldots + \lambda_i + \lambda_{i+1} = \widetilde{\mu}_1 + \ldots + \widetilde{\mu}_i + \lambda_{i+1} < \widetilde{\mu}_1 + \ldots + \widetilde{\mu}_i + \widetilde{\mu}_{i+1},
\]
contradicting $\lambda \unrhd \widetilde{\mu}$. Hence, we find $\lambda_1 + \ldots + \lambda_i \geq \widetilde{\mu}_1 + \ldots + \widetilde{\mu}_i + 1$. It follows that~$\lambda$ also dominates the composition $(\widetilde{\mu}_1,\ldots,\widetilde{\mu}_i+1,\widetilde{\mu}_{i+1}-1,\ldots)$ that is obtained from~$\widetilde{\mu}$ by moving a box from row $i+1$ to row $i$.

If $\lambda_i \geq \widetilde{\mu}_{i+1}$, then $\lambda_i > \widetilde{\mu}_i$, so $\lambda_i - \widetilde{\mu}_i >0$. Because of $\lambda \unrhd \widetilde{\mu}$, we find
\[
	\lambda_1 + \ldots + \lambda_i \geq \widetilde{\mu}_1 + \ldots + \widetilde{\mu}_i.
\]
If this inequality was an equality, then it would follow that
\[
	\lambda_1 + \ldots + \lambda_{i-1} < \lambda_1 + \ldots + \lambda_{i-1} + \lambda_i - \widetilde{\mu}_i = \widetilde{\mu}_1 + \ldots + \widetilde{\mu}_i - \widetilde{\mu}_{i} = \widetilde{\mu}_1 + \ldots + \widetilde{\mu}_{i-1},
\]
contradicting $\lambda \unrhd \widetilde{\mu}$. So we again deduce that $\lambda$ also dominates the composition $(\widetilde{\mu}_1,\ldots,\widetilde{\mu}_i+1,\widetilde{\mu}_{i+1}-1,\ldots)$ that is obtained from $\widetilde{\mu}$ by moving a box from row $i+1$ to row $i$.

As long as $\widetilde{\mu}$ is not a partition, so as long as we find an index $i$ with $\widetilde{\mu}_i < \widetilde{\mu}_{i+1}$, we can repeat the argument. Applied to the composition $\widetilde{\mu} = \mu + 0^k 1^{\ell -k}$, we find that if $\widetilde{\mu}$ is not a partition, then $\widetilde{\mu}_{k+1} = \widetilde{\mu}_k+1 > \widetilde{\mu}_k$. Hence, we can move the box at the end of row $k+1$ as far up in its column as it will go. We can do the same for all boxes in the column. This creates a partition.

It follows that if $\lambda \unrhd \mu + 0^k 1^{\ell -k}$, then $\lambda$ also dominates the partition that is obtained from $\mu + 0^k 1^{\ell -k}$ by moving all boxes in a column as far up as possible. In the dual, this corresponds to moving all boxes in a row as far left as possible. This agrees exactly with Definition \ref{def:skew tableau union}, so we obtain
\[
	\lambda' \unlhd \mu' \cup (0^k,\ell -k).\qedhere
\]
\end{proof}

We can now characterise the relation $\succeq$ for parabolic types.

\begin{theorem}
\label{thm:designs_parabolic}
Let $(\sigma,k)$ and $(\tau,\ell)$ be two double partitions of the same size. Then
\[
(\sigma,k)\succeq(\tau,\ell)\;\Longleftrightarrow\;\text{$k\le \ell$ and $(\sigma\cup (k))\unlhd (\tau\cup(\ell))$}.
\]
\end{theorem}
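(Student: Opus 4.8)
The implication ``$\Longrightarrow$'' is immediate from Proposition~\ref{pro:necessary_conditions}: part~(a) gives $(k)\unlhd(\ell)$, i.e.\ $k\le\ell$, and part~(c) gives $\sigma\cup(k)\unlhd\tau\cup(\ell)$. So the content is the reverse implication, and the plan is to build, using transitivity of $\succeq$, a chain of $\succeq$-relations from $(\sigma,k)$ to $(\tau,\ell)$ out of the Dominance rule (Lemma~\ref{lem:dominance rule}), the Sum rule (Lemma~\ref{lem:sum rule}) and its corollaries (Corollaries~\ref{cor:union_rule} and~\ref{cor:exchange_rule}).

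I would first dispose of the equal-size case $|\sigma|=|\tau|$, which by $|\sigma|+k=|\tau|+\ell$ is the same as $k=\ell$. Here it suffices to show that $\sigma\cup(k)\unlhd\tau\cup(k)$ already forces $\sigma\unlhd\tau$, since the corollary to Lemma~\ref{lem:dominance rule} then gives $(\sigma,k)\succeq(\tau,k)$. Passing to conjugates via $(\lambda\cup\mu)'=\lambda'+\mu'$ and $(k)'=(1^k)$ turns the hypothesis into $\tau'+(1^k)\unlhd\sigma'+(1^k)$; since the $t$-th partial sums on both sides exceed those of $\tau'$ and of $\sigma'$ by the same quantity $\min(t,k)$, this is equivalent to $\tau'\unlhd\sigma'$, i.e.\ to $\sigma\unlhd\tau$.

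For the general case put $m=\ell-k>0$, so $|\sigma|=|\tau|+m$. It is enough to produce a partition $\sigma^{*}$ of $|\tau|$ with $\sigma^{*}\unlhd\tau$ and $(\sigma,k)\succeq(\sigma^{*},\ell)$, the equal-size case then finishing the proof. To obtain $(\sigma,k)\succeq(\sigma^{*},\ell)$ I would (i) apply the Dominance rule to pass from $\sigma$ to a partition $\sigma'\unrhd\sigma$ of the same size that has a part of size $p\in[m,\ell]$, chosen as small as possible in dominance order; its existence follows from the inequality $\sigma_1\le\max(\tau_1,\ell)$ forced by $\sigma\cup(k)\unlhd\tau\cup(\ell)$. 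Then (ii) apply the Sum rule with $\rho=(k)$, $s=1$, $k'=m$ to that part of $\sigma'$: since $p-m\le\ell-m=k=\rho_1$, the hypothesis $\sigma'_r\le\rho_s$ of Lemma~\ref{lem:sum rule} holds, and the rule moves $m$ boxes into the marked part, changing $(k)$ into $(\ell)$ and leaving a first component $\sigma^{*}$ of size $|\sigma|-m=|\tau|$. Boundary situations — $\sigma$ already having a part in $[m,\ell]$, or $\sigma$ being (forced to be) a single part — are handled directly by the same rules, noting that the Union and Exchange rules are themselves special cases of the Sum rule.

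The step I expect to be the main obstacle is checking that $\sigma^{*}\unlhd\tau$, and in particular that the condensation in~(i) must be the \emph{minimal} one: condensing $\sigma$ too far overshoots $\tau$. For instance, with $\sigma=(3,3)$, $k=1$, $\ell=2$ and $\tau=(4,1)$ the hypotheses hold, yet condensing $(3,3)$ to $(5,1)$ yields $\sigma^{*}=(5)\not\unlhd(4,1)$, whereas the minimal condensation $(3,3)\to(4,2)$ yields $\sigma^{*}=(4,1)\unlhd(4,1)$. A monotonicity remark (a more condensed $\sigma'$ produces a more condensed $\sigma^{*}$) shows the minimal choice is optimal, so everything reduces to a single partial-sum estimate: starting from $\sigma\cup(k)\unlhd\tau\cup(\ell)$, follow how the inequality is affected by the minimal condensation of $\sigma$ and by deleting $m$ boxes from the part of size $p$, and conclude $\sigma^{*}\cup(\ell)\unlhd\tau\cup(\ell)$, equivalently $\sigma^{*}\unlhd\tau$. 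Making this bookkeeping precise is the technical core of the argument.
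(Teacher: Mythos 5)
Your forward direction and your reduction of the equal-size case (via conjugates and the corollary of Lemma~\ref{lem:dominance rule}) are fine, and the general strategy --- condense $\sigma$ upwards by the dominance rule, then use the sum rule to move $m=\ell-k$ boxes from a single part into the second component, then finish with the equal-size case --- is plausible. But as written there is a genuine gap: the one claim that carries the whole reverse implication, namely that the condensation can be chosen so that the resulting partition $\sigma^{*}$ satisfies $\sigma^{*}\unlhd\tau$, is never proved; you explicitly defer it as ``bookkeeping''. The auxiliary devices you invoke to organize that verification are also not in place: ``the condensation chosen as small as possible in dominance order'' is not well-defined in general (dominance is only a partial order, and the set of condensations of $\sigma$ possessing a part in $[m,\ell]$ need not have a least element), the asserted monotonicity remark is unsubstantiated, and the existence claim in step (i) is justified only by the inequality $\sigma_1\le\max(\tau_1,\ell)$, which by itself does not produce a part in $[m,\ell]$; an actual argument (e.g.\ shrinking the last part of $\sigma$ box by box, plus handling the one-row case where the hypothesis forces $\sigma_1\le\ell$) is needed. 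Your own example $\sigma=(3,3)$, $\tau=(4,1)$ shows the conclusion is sensitive to the choice, so this is not a routine omission: it is exactly the technical core of the theorem, and the proposal stops short of it.

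It is worth contrasting this with how the paper avoids the problem. There, the hypothesis is conjugated to give $\sigma'\unrhd\tau'+0^k1^{\ell-k}$, hence $\sigma\unlhd\tau\cup(0^k,\ell-k)$, and this target partition is written explicitly as the cascade $\tau+_a(\ell-\tau_a)+_{a+1}(\tau_a-\tau_{a+1})+\cdots+_b(\tau_{b-1}-k)$, where $a$ and $b$ are the first indices with $\tau_a\le\ell$ and $\tau_b\le k$. One application of the dominance rule brings $(\sigma,k)$ up to this cascade partition, and then the sum rule (Lemma~\ref{lem:sum rule}) is applied repeatedly, peeling the added boxes back off one batch at a time while the second component grows $k\to\tau_{b-1}\to\cdots\to\tau_a\to\ell$; the hypothesis of the sum rule at each step is immediate ($\tau_b\le k$, $\tau_{i}\le\tau_{i}$, $\tau_a\le\tau_a$), so no partial-sum estimates are needed. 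Note also that a single sum-rule application from your minimally condensed $\sigma''$ is not obviously equivalent to this cascade (the cascade spreads the $m$ added boxes over several parts of $\tau$), so if you want to keep the one-step route you must actually carry out the partial-sum verification of $\sigma^{*}\unlhd\tau$ for a concretely specified choice of $\sigma''$ and of the part $p$; until that is done the proof is incomplete.
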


\begin{proof}
The forward direction follows from Proposition~\ref{pro:necessary_conditions}.

For the reverse direction, assume that $k\le \ell$ and $(\sigma\cup (k))\unlhd (\tau\cup(\ell))$. Taking duals, it follows that $\sigma'+1^k\unrhd\tau'+1^\ell$ and therefore
\[ 
\sigma'\unrhd\tau'+0^k1^{\ell-k}.
\]
Hence, from Lemma~\ref{lem:skew tableau union} we find
\[
\sigma\unlhd\tau\cup (0^k,\ell-k),
\]
so we deduce that $(\sigma,k) \succeq (\tau \cup (0^k,\ell -k),k)$ using the dominance rule (Lemma~\ref{lem:dominance rule}). We now distinguish the two cases in Definition~\ref{def:skew tableau union}.

If $\tau\cup (0^k,\ell-k) = \tau +_i (\ell -k)$ for some $i$, then $\tau_i \leq k$ and we find
\[
	(\sigma,k) \succeq (\tau +_i (\ell -k),k) \succeq (\tau,k + (\ell -k)) = (\tau,\ell)
\]
using the sum rule (Lemma~\ref{lem:sum rule}).

If $\tau\cup (0^k,\ell-k) = \tau +_{i-1} (\ell - \tau_{i-1}) +_i (\tau_{i-1}-k)$ for some $i$, then again $\tau_i \leq k$ and we find
\[
	(\sigma,k) \succeq (\tau +_{i-1} (\ell - \tau_{i-1}) +_i (\tau_{i-1}-k),k) \succeq (\tau +_{i-1} (\ell - \tau_{i-1}),\tau_{i-1}) = (\tau,\ell)
\]
using the sum rule (Lemma~\ref{lem:sum rule}) twice.
\end{proof}

This theorem gives an easy criterion to compare parabolic transitivity types in the group $W$. It is a generalisation of the Livingstone--Wagner theorem (see Theorem~\ref{thm:livingstone wagner}) from $S_n$ to the group $W$. Moreover, in the case $G = C_1$, the parabolic types become the double partitions $(\sigma,\emptyset)$, so we have $k = 0$ and Theorem~\ref{thm:designs_parabolic} specialises to Theorem~5 in \cite{MarSag2007}, the generalisation of the Livingstone--Wagner theorem by Martin and Sagan for the symmetric group~$S_n$.

Let us apply Theorem~\ref{thm:designs_parabolic} to $W = C_2 \wr S_n$, the symmetry group of the $n$-dimensional cube.

\begin{example}\label{ex:lw_for_cube}
Let $W = C_2 \wr S_n$ be the hyperoctahedral group, also denoted as $B_n$. Then all types $\usigma$ are simple.

First, let $n=3$ and consider the parabolic types $\usigma = (2,1)$ and $\utau = (1,2)$. They correspond to the subgroups $H_{\usigma} = S_2 \times B_1$ and $H_{\utau} = S_1 \times B_2$. Geometrically, the double partition $(2,1)$ stands for edge-transitivity and the double partition $(1,2)$ stands for face-transitivity. Applying Theorem~\ref{thm:designs_parabolic} gives $(2,1) \succeq (1,2)$, that is every $(2,1)$-transitive set is also $(1,2)$-transitive. Geometrically, this means that every subset of $C_2 \wr S_3$ that is transitive on the edges of a $3$-dimensional cube is also transitive on the faces of a $3$-dimensional cube.

We can generalise this observation to the $n$-dimensional cube. The stabiliser of a $t$-dimensional face of the cube is $S_{n-t} \times B_t$. Thus, the double partition $(n-t,t)$ describes transitivity on the $t$-dimensional faces of the $n$-dimensional cube. Theorem~\ref{thm:designs_parabolic} gives
\[
	(n-t,t) \succeq (n-s,s) \; \Longleftrightarrow\; t \leq s \text{ and } ((n-t) \cup (t)) \unlhd ((n-s) \cup (s)).
\]
We distinguish the cases $t \geq n-t$ and $t < n-t$.

If $t \geq n-t$, then Theorem~\ref{thm:designs_parabolic} gives $(n-t,t) \succeq (n-s,s)$ for every $s \geq t$ and $(n-t,t) \not\succeq (n-s,s)$ for every $s < t$. Thus, if $n/2 \leq t \leq n-1$, then a set that acts transitively on the $t$-dimensional faces of an $n$-dimensional cube also acts transitively on the $(t+1)$-dimensional faces.

If $t < n-t$, then Theorem~\ref{thm:designs_parabolic} gives $(n-t,t) \succeq (t,n-t)$ and $(n-t,t) \not \succeq (n-s,s)$ for every $s$ with $0 \leq s < n-t$ except $s = t$. Thus, if $0 \leq t < n/2$, then transitivity on the $t$-dimensional faces of an $n$-dimensional cube implies transitivity on the $(n-t)$-dimensional faces and we are in the first case again.
\end{example}

We summarise our findings in the following theorem.

\begin{theorem}
Let $Y \subseteq C_2 \wr S_n$ be a non-empty subset.
\begin{enumerate}[label=\textup{(}\alph*\textup{)}]
\item{
If $t$ is an integer with $n/2 \leq t < n$ and $Y$ is transitive on the faces of dimension~$t$ of an $n$-dimensional cube, then $Y$ is transitive on the faces of dimension $t+1$.
}
\item{
If $t$ is an integer with $1 \leq t < n/2$ and $Y$ is transitive on the faces of dimension~$t$ of an $n$-dimensional cube, then $Y$ is transitive on the faces of dimension $n-t$.
}
\end{enumerate}
\end{theorem}

Similar results hold for complex regular polytopes, that is the cases $W = C_r \wr S_n$ with $r > 2$ in Theorem~\ref{thm:designs_parabolic}.

We strongly believe that the dominance rule and the sum rule characterise the relation $\succeq$ completely. That is, we make the following conjecture.

\begin{conjecture}\label{conj:small steps}
Let $(\sigma,\rho)$ and $(\tau,\pi)$ be two double partitions of the same size. Then the following are equivalent.
\begin{enumerate}[label=\textup{(}\alph*\textup{)}]
\item{
$(\sigma,\rho) \succeq (\tau,\pi)$
}
\item{
There exists a sequence $(\sigma_i,\rho_i)$ with 
\[
	(\sigma,\rho) = (\sigma_1,\rho_1) \succeq (\sigma_2,\rho_2) \succeq \ldots \succeq (\sigma_k,\rho_k) = (\tau,\pi)
\]
such that every step $(\sigma_i,\rho_i) \succeq (\sigma_{i+1},\rho_{i+1})$ for $i = 1,\ldots, k-1$ follows from Lemma~\ref{lem:dominance rule} (dominance rule) or Lemma~\ref{lem:sum rule} (sum rule).
}
\end{enumerate}
\end{conjecture}

We have verified this conjecture for the hyperoctahedral group $C_2 \wr S_n$ for every $n \leq 25$ for every double partition of size $n$. Note that the implication $(b)~\Longrightarrow ~(a)$ follows because the relation $\succeq$ is transitive.

We also have computational evidence that the following criterion characterises the relation $\succeq$.

\begin{conjecture}\label{conj:equivalent conditions}
Let $(\sigma,\rho)$ and $(\tau,\pi)$ be two double partitions of the same size. Then the following are equivalent.
\begin{enumerate}[label=\textup{(}\alph*\textup{)}]
\item{
$(\sigma,\rho) \succeq (\tau,\pi)$
}
\item{
There exists a partition $\widetilde{\rho}$ with $|\widetilde{\rho}| = |\rho|$ and $\widetilde{\rho} \unrhd \rho$ such that 
\[
	\widetilde{\rho} \subseteq \pi \text{ and } \sigma \cup \widetilde{\rho} \unlhd \tau \cup \pi.
\]
}
\end{enumerate}
\end{conjecture}

Notice that the condition in (b) is similar to the necessary conditions of Proposition~\ref{pro:necessary_conditions}. If $(\sigma,\rho) \succeq (\tau,\pi)$, then it follows that $\rho \unlhd \pi$, which is equivalent to the existence of a partition $\widetilde{\rho}$ with $|\widetilde{\rho}| = |\rho|$ and $\widetilde{\rho} \unrhd \rho$ such that $\widetilde{\rho}\subseteq \pi$. The conjecture claims that there is such a partition $\widetilde{\rho}$ that satisfies $\sigma \cup \widetilde{\rho} \unlhd \tau \cup \pi$ while Proposition~\ref{pro:necessary_conditions} only asserts that $\sigma \cup \rho \unlhd \tau \cup \pi$. Notice that the stronger condition rules out the counterexample in Example~\ref{ex:not sufficient} that shows that the conditions of Proposition~\ref{pro:necessary_conditions} are not sufficient. Furthermore, notice that Theorem~\ref{thm:designs_parabolic} is a special case of Conjecture~\ref{conj:equivalent conditions} because $k \leq \ell \Longleftrightarrow (k) \subseteq (\ell)$.

There is a relation between Conjecture~\ref{conj:small steps} and Conjecture~\ref{conj:equivalent conditions}. We can think of the condition in Conjecture~\ref{conj:equivalent conditions} as first using the dominance rule for the double partitions $(\sigma,\rho)$ and $(\sigma,\widetilde{\rho})$ and then using the sum rule. It is not hard to show that if $(\sigma,\widetilde{\rho}) \succeq (\tau,\pi)$ follows from a sequence of applications of the sum rule, then the conditions of Conjecture~\ref{conj:equivalent conditions} are satisfied. Furthermore, through careful case analysis, it can be shown that a chain
\[
	(\sigma,\rho) = (\sigma_1,\rho_1) \succeq (\sigma_2,\rho_2) \succeq \ldots \succeq (\sigma_k,\rho_k) = (\tau,\pi)
\]
where each step can be explained through the dominance rule and the sum rule can be transformed into a chain where the first steps follow from the dominance rule and the remaining steps follow from the sum rule. Hence, we get a condition very similar to the condition in Conjecture~\ref{conj:equivalent conditions}. We expect that Definition~\ref{def:skew tableau union} can be generalised to skew tableaux, giving a way to prove the conjecture.

\begin{remark}\label{rem:general types}
The results of this section can be generalised to non-simple types $\usigma \in \Sigma_n(G)$. The partitions in $\usigma$ are indexed by the subgroups of $G$. It is not hard to see that suitable generalisations of the dominance rule (Lemma~\ref{lem:dominance rule}) and the sum rule (Lemma~\ref{lem:sum rule}) hold in this case.

First, it is clear that the dominance rule holds for every component $\usigma(U)$ of~$\usigma$ independently as the proof of Lemma~\ref{lem:dominance rule} works for a colouring coming from an arbitrary group $G$.

For the sum rule, consider two subgroups $U,V \leq G$ such that $U \subseteq V$ (equivalently $U^\circ \supseteq V^\circ$). Then, we can look at the double partition $(\usigma(U),\usigma(V))$ of two entries of $\usigma$. For this double partition, the sum rule holds, that is, we can move some boxes from the partition $\usigma(U)$ to the partition $\usigma(V)$. The reason is that the condition $U \subseteq V$ ensures that the colouring can be copied as in the proof of Lemma~\ref{lem:sum rule}. Thus, the sum rule holds for any two entries of $\usigma$ indexed by $U$ and $V$ provided that $U \subseteq V$. It follows that the union rule and the exchange rule also hold because they are special cases of the sum rule.

Finally, the property from Proposition~\ref{pro:necessary_conditions} (c) generalises to non-simple types by taking the union of all partitions $\usigma(U)$ for $U \leq G$.
\end{remark}

\section{Designs, codes and orthogonal polynomials}
\label{sec:polys_and_designs}

Certain association schemes, namely $P$- and $Q$-polynomial schemes, are closely related to orthogonal polynomials in the sense that their character tables arise as evaluations of such polynomials (see \cite{BanIto1984} or \cite{Del1973}). The conjugacy class scheme of $C_r \wr S_n$ does not have these properties. Nevertheless, there is still a relationship to certain orthogonal polynomials, namely the Charlier polynomials. We show that there are characters associated with the Charlier polynomials which have a nice decomposition into irreducible characters. These characters and their decomposition can then be used to characterise codes and designs in the group $C_r \wr S_n$.

\subsection{The branching rule}

We first recall the well-know \emph{branching rule} that will be useful in this section. It gives rise to much of the combinatorial structure of the representation theory of the symmetric group.
\begin{definition}
Let $n \in \N$ and $\lambda \vdash n$. The set $\lambda^-$ is the set of all partitions of $n-1$ that can be obtained from $\lambda$ by decreasing a part by $1$. Similarly, for $n \in \N_0$ and $\lambda \vdash n$, the set $\lambda^+$ is the set of all partitions of $n+1$ that can be obtained from~$\lambda$ by increasing a part by $1$ or by adding a part of size $1$ to $\lambda$.
\end{definition}

The branching rule can now be stated in terms of the sets $\lambda^-$ and $\lambda^+$.

\begin{theorem}\label{thm:branching_symmetric_group}
Let $n \in \N$ and $\lambda \vdash n$. Then we have
\begin{enumerate}[label=\textup{(}\alph*\textup{)}]
\item{
	$\chi^\lambda \downarrow^{S_n}_{S_{n-1}} = \sum\limits_{\mu \in \lambda^-} \chi^\mu$\,,
}
\item{
	$\chi^\lambda \uparrow^{S_{n+1}}_{S_n} = \sum\limits_{\mu \in \lambda^+} \chi^\mu$\,.
}
\end{enumerate}
\end{theorem}
For a proof of the formula, see \cite[Theorem 2.8.3]{Sag2001} for example.

There is a variant of the branching rule for wreath products of the form $G \wr S_n$ (see \cite[Theorem 10]{Pus99}). We adapt the statement to groups of the form $C_r \wr S_n$. First, we give the appropriate analogues of the sets $\lambda^-$ and $\lambda^+$ for partition tuples.

\begin{definition}
Let $n \in \N$ and $\ulambda \in \Lambda_n(C_r)$. The set $\ulambda^-$ is the set of all partition tuples $\umu \in \Lambda_{n-1}(C_r)$ that can be obtained from $\ulambda$ by decreasing a part of a partition $\ulambda(g)$ for some $g \in C_r$ by $1$. Similarly, for $n \in \N_0$ and $\ulambda \in \Lambda_n(C_r)$, the set $\ulambda^+$ is the set of all partition tuples $\umu \in \Lambda_{n+1}(C_r)$ that can be obtained from~$\ulambda$ by increasing a part of a partition $\ulambda(g)$ by $1$ or by adding a part of size~$1$ to $\ulambda(g)$ for some $g \in C_r$.
\end{definition}

In other words, $\ulambda^-$ (resp. $\ulambda^+$) is the set of all partition tuples that can be obtained from $\ulambda$ by replacing precisely one entry $\ulambda(g)$ with an element of $\ulambda(g)^-$ (resp. $\ulambda(g)^+$).

\begin{example}
Let $r=3$. We identify the elements of $C_3$ with the set $\{0,1,2\}$ and write an element $\ulambda \in \Lambda_n(C_3)$ as a tuple $\ulambda = \big(\ulambda(0),\ulambda(1),\ulambda(2)\big)$. For the partition tuple $(2,\emptyset,1) \in \Lambda_3(C_3)$, we have that
\begin{align*}
	(2,\emptyset,1)^- &= \{(1,\emptyset,1),(2,\emptyset,\emptyset)\},\\
	(2,\emptyset,1)^+ &= \{(3,\emptyset,1),(21,\emptyset,1),(2,1,1),(2,\emptyset,2),(2,\emptyset,11)\}.
\end{align*}
\end{example}

We can now state the branching rule for the group $C_r \wr S_n$.

\begin{theorem}\label{thm:branching_generalised_symmetric_group}
Let $r,n \in \N$ and $\ulambda \in \Lambda_n(C_r)$. Then we have
\begin{enumerate}[label=\textup{(}\alph*\textup{)}]
\item{
	$\chi^\ulambda \downarrow^{C_r \wr S_n}_{C_r \wr S_{n-1}} = \sum\limits_{\umu \in \ulambda^-} \chi^{\umu}$\,,
}
\item{
	$\chi^\ulambda \uparrow^{C_r \wr S_{n+1}}_{C_r \wr S_n} = \sum\limits_{\umu \in \ulambda^+} \chi^{\umu}$\,.
}
\end{enumerate}
\end{theorem}

For a proof, see \cite[Theorem 10]{Pus99}. We remark that a proof can also be obtained from our proof of Theorem~\ref{thm:decomposition_perm_char} by going through the proof for the case $H = C_r \wr S_{n-1}$ in more detail. Further, note that Theorem~\ref{thm:branching_generalised_symmetric_group} reduces to Theorem~\ref{thm:branching_symmetric_group} in the case $r=1$.

A well-known result that will be useful together with the branching rule is the following. It connects induction and restriction of characters.

\begin{theorem}\label{thm:induction_of_product_and_restriction}
Let $G$ be a finite group and let $H$ be a subgroup of $G$. Furthermore, let $\chi$ be a character of $H$ and let $\psi$ be a character of $G$. Then we have
\[
	\left(\chi \uparrow_H^G\right) \cdot \psi = \left( \chi \cdot \psi \downarrow_H^G \right)\uparrow_H^G.
\]
\end{theorem}

For a proof, see \cite[Chapter 7.2]{Ser1977}.

\subsection{Charlier Polynomials}
The \emph{Charlier polynomial} of degree $k$ with parameter $a\in\R$, $a > 0$, is given by
\[
C_k^{(a)}(x)=\sum_{j=0}^k(-1)^{k-j}\binom{k}{j}a^{-j}(x)_j,
\]
where $(x)_j=x(x-1)\cdots(x-j+1)$ is the falling factorial. Some properties of these polynomials can be found in \cite[Chapter 1.12]{KoeSwa1998}. The first polynomials are
\begin{align*}
C_0^{(a)}(x) &= 1,\\
C_1^{(a)}(x) &= a^{-1}x-1,\\
C_2^{(a)}(x) &= a^{-2}x^2-(2a^{-1}+a^{-2})x+1.
\end{align*}

In fact, $C^{(a)}_1,C^{(a)}_2,\dots$ form a system of orthogonal polynomials with respect to the Poisson distribution with mean $a$. That is, we have
\begin{equation}\label{eqn:def_charlier}
 \sum_{i=0}^\infty \frac{a^i}{i!}\,C^{(a)}_k(i)C^{(a)}_\ell(i)=0\quad\text{for $k\ne\ell$}.
\end{equation}
Sometimes the Charlier polynomials are defined to be the polynomials $a^k C_k^{(a)}$ or $(-1)^k C_k^{(a)}$. However, all of them satisfy the orthogonality relation \eqref{eqn:def_charlier}.

It is well-known that the monic Charlier polynomials $p^{(a)}_k$ satisfy the recurrence
\[
	xp_k(x) = p_{k+1}(x) +(k+a)p_k(x) + kap_{k-1}(x)
\]
for every $k \in \N$ where $p^{(a)}_0(x) = 1$ and $p^{(a)}_1(x) = x-a$. Since the leading coefficient of $C^{(a)}_k$ is $a^{-k}$, it follows that the Charlier polynomials $C^{(a)}_k$ in the way we defined them satisfy the recurrence
\begin{equation}\label{eqn:Charlier_recurrence}
xC^{(a)}_k(x) = aC^{(a)}_{k+1}(x) +(k+a)C^{(a)}_k(x) + kC^{(a)}_{k-1}(x)
\end{equation}
for every $k \in \N$.

For $g\in C_r\wr S_n$, let 
\[
\theta(g)=\frac{1}{r}\,\left|\left\{(c,i)\in[r]\times [n] : g(c,i)=(c,i)\right\}\right|.
\]

That is, $\theta(g)$ is the number of fixed points of the natural action of $C_r \wr S_n$ on $[r]\times [n]$, scaled by $1/r$. Then $\theta$ is a class function of $C_r\wr S_n$ taking values in $\{0,1,\dots,n\}$ (note that $g(c,i) = (c,i)$ for some $c \in [r]$ and $i \in [n]$ implies $g(\widetilde{c},i) = (\widetilde{c},i)$ for every $\widetilde{c} \in [r]$). We write $w_i$ for the number of elements $g \in C_r \wr S_n$ satisfying $\theta(g) = i$. Using the inclusion-exclusion principle, one can derive the expression
\[
w_i=\frac{n!\,r^{n-i}}{i!}\sum_{j=0}^{n-i}\frac{(-1)^j}{j!\,r^j},
\]
see for example \cite[Section 7]{ChoMan2012}. We shall later see that this expression also follows from our results (see Remark~\ref{rem:Poisson_moments}).

The function $\theta$ is a class function and it defines a uniformly distributed discrete random variable on $C_r \wr S_n$. It was shown in \cite[Prop.~7.4]{ChoMan2012} that, for $k\le n$, the $k$-th moment of~$\theta$ equals the $k$-th moment of the Poisson distribution with mean $1/r$ (the condition $k\le n$ was erroneously omitted in \cite[Prop.~7.4]{ChoMan2012}). The $k$-th moment of the Poisson distribution is given by the expression
\[
	e^{-\frac{1}{r}}\sum_{i=0}^\infty \frac{\left(\frac{1}{r}\right)^{i}}{i!} \cdot i^k .
\]
Hence, we obtain
\begin{equation}\label{eqn:moments}
	\mathbb{E}[\theta^k] = \sum_{g \in G} \frac{1}{|G|}\cdot\theta(g)^k = \frac{1}{|G|} \sum_{i=0}^n w_i \cdot i^k = e^{-\frac{1}{r}} \sum_{i=0}^\infty \frac{\left(\frac{1}{r}\right)^{i}}{i!} \cdot i^k
\end{equation}
for every $k \leq n$. We mention in passing that the $k$-th moment of the Poisson distribution with mean $1/r$ is given by
\[
\sum_{i=1}^k\left(\frac{1}{r}\right)^iS(k,i),
\]
where $S(k,i)$ are the Stirling numbers of the second kind, counting the number of partitions of a set of size $k$ into $i$ non-empty subsets.

As $C_k^{(1/r)}C_l^{(1/r)}$ is a polynomial of degree $k+l$, we find from \eqref{eqn:def_charlier} and \eqref{eqn:moments} that the Charlier polynomials also satisfy the orthogonality relation
\begin{equation}
\sum_{i=0}^nw_i\,C^{(1/r)}_k(i)C^{(1/r)}_\ell(i)=0\quad\text{for $k\ne\ell$ and $k+\ell\le n$}.   \label{eqn:Charlier_orthogonal}
\end{equation}

In this section, we frequently use the well-known \emph{binomial transform}. For a sequence $(a_k)_{k \in \N_0}$, we have
\begin{equation}\label{eqn:binomial_inversion}
	s_n = \sum_{k=0}^n \binom{n}{k} a_k \; \Longleftrightarrow \; a_k = \sum_{n=0}^k (-1)^{k-n}\binom{k}{n} s_n,
\end{equation}
also called \emph{binomial inversion}, which can be obtained from the binomial theorem by straightforward calculations. Furthermore, if $t \in \N_0$ is an integer and we consider the sequence $(s_n)_{n =0,\ldots,t}$ defined by
\[
	s_n = \sum_{k=0}^t \binom{k}{n} a_k,
\]
then a variant of the binomial transform is given by
\begin{equation}\label{eqn:binomial_inversion_variant}
	s_n = \sum_{k=0}^t \binom{k}{n} a_k \; \Longleftrightarrow \; a_k = \sum_{n=0}^t (-1)^{n-k}\binom{n}{k} s_n.
\end{equation}

With every polynomial $f(x)=f_nx^n+\cdots+f_1x+f_0$ in $\R[x]$, we associate the class function $f(\theta)=f_n\theta^n+\cdots+f_1\theta+f_0 1_{C_r \wr S_n}$. This induces an algebra homomorphism from~$\R[x]$ to the set of class functions of $C_r\wr S_n$.

Let~$\xi_j$ be the permutation character on ordered $j$-tuples of pairs $(c_k,i_k) \in [r]\times [n]$ with all $i_k$ pairwise distinct. By convention, $\xi_0$ is the trivial character of $C_r\wr S_n$. Note that
\[
\xi_j = r^j \theta(\theta - 1) \ldots (\theta -j +1)=r^j\,(\theta)_j.
\]
Hence, we have
\begin{equation}
C^{(1/r)}_k(\theta)=\sum_{j=0}^k(-1)^{k-j}{\binom{k}{j}}\xi_j\quad\text{for $k=0,1,\dots,n$}, \label{eqn:C_from_xi}
\end{equation}
and from~\eqref{eqn:binomial_inversion}, we obtain
\begin{equation}
\xi_j=\sum_{k=0}^j{\binom{j}{k}}C^{(1/r)}_k(\theta)\quad\text{for $j=0,1,\dots,n$}.   \label{eqn:xi_from_C}
\end{equation}
For $0\le k\le n/2$, we now determine the irreducible constituents of $C^{(1/r)}_k(\theta)$. Recall that for two elements $\ulambda \in \Lambda_\ell(G)$ and $\umu \in \Lambda_m(G)$, their union $\ulambda \cup \umu \in \Lambda_{\ell+m}(G)$ is defined as $(\ulambda \cup \umu)(g) = \ulambda(g) \cup \umu(g)$ for every $g \in G$.
\begin{theorem}
\label{thm:decomposition_Charlier_chars}
For each $k$ satisfying $0\le k\le n/2$, we have
\[
	C^{(1/r)}_k(\theta) = \sum_{\ulambda \in \Lambda_k(C_r)} \deg(\chi^\ulambda) \, \chi^{(n-k,\emptyset,\ldots,\emptyset) \cup \ulambda}\,.
\]
In particular, $C^{(1/r)}_k(\theta)$ is a character of $C_r\wr S_n$, and for $\ulambda \in \Lambda_n(C_r)$ we have
\[
\bigang{C^{(1/r)}_k(\theta),\chi^{\ulambda}}\ne 0\;\Longleftrightarrow\; \ulambda(0)_1=n-k.
\]
\end{theorem}
\begin{proof}
From \eqref{eqn:Charlier_recurrence}, we find that for $a = 1/r$ we have the recurrence
\[
rxC^{(1/r)}_k(x) = C^{(1/r)}_{k+1}(x) +(rk+1)C^{(1/r)}_k(x) + rkC^{(1/r)}_{k-1}(x)
\]
for every $k \in \N$. The Charlier polynomials are uniquely defined by this recurrence and the first two polynomials $C^{(1/r)}_0(x) = 1$ and $C^{(1/r)}_1(x) = rx-1$. Letting $x = \theta$, we find the recurrence
\[
r\theta C^{(1/r)}_k(\theta) = C^{(1/r)}_{k+1}(\theta) +(rk+1)C^{(1/r)}_k(\theta) + rkC^{(1/r)}_{k-1}(\theta).
\]
for every $k \in \N$. This is an identity of class functions.

Let
\[
	F_k^{(r)} = \sum_{\ulambda \in \Lambda_k(C_r)} \deg(\chi^\ulambda) \,\chi^{(n-k,\emptyset,\ldots,\emptyset) \cup \ulambda}
\]
denote the proposed expression for $C^{(1/r)}_k(\theta)$. We show $C^{(1/r)}_k(\theta) = F_k^{(r)}$ for $k \leq n/2$ by showing that $C^{(1/r)}_0(\theta) = F_0^{(r)}$ and $C^{(1/r)}_1(\theta) = F_1^{(r)}$ and then showing that $F_k^{(r)}$ satisfies the recurrence
\begin{equation}\label{eqn:recurrence_for_Fk}
r\theta F_k^{(r)} = F_{k+1}^{(r)}(\theta) +(rk+1)F_k^{(r)} + rkF_{k-1}^{(r)}
\end{equation}
for every $k \in \N$ with $1 \leq k \leq (n-2)/2$.

First, notice that
\begin{align*}
	C^{(1/r)}_0(\theta) &= 1_{C_r \wr S_n} = \chi^{(n,\emptyset,\ldots,\emptyset)} = F_0^{(r)},\\
	C^{(1/r)}_1(\theta) &= r\theta - 1_{C_r \wr S_n} = \xi_1 - \chi^{(n,\emptyset,\ldots,\emptyset)}.
\end{align*}
Notice that $\xi_1$ is the fixed point character of $C_r \wr S_n$ in its natural action. The stabiliser of a point is $C_r \wr S_{n-1}$, so $\xi_1 = 1\uparrow_{C_r \wr S_{n-1}}^{C_r \wr S_n}$. The trivial character of $C_r \wr S_{n-1}$ is indexed by the partition tuple $(n-1,\emptyset,\ldots,\emptyset)$. Hence, we find from the branching rule (Theorem~\ref{thm:branching_generalised_symmetric_group}) that
\[
	\xi_1 = 1 \uparrow^{C_r \wr S_{n}}_{C_r \wr S_{n-1}} = \sum\limits_{\umu \in \ulambda^+} \chi^{\umu}
\]
where $\ulambda = (n-1,\emptyset,\ldots,\emptyset)$. The partition tuples in the set $(n-1,\emptyset,\ldots,\emptyset)^+$ are easily seen to be
\begin{align*}
	(n-1,\emptyset,\ldots,\emptyset)^+ =& \{(n,\emptyset,\ldots,\emptyset),((n-1,1),\emptyset,\ldots,\emptyset),\\
		&\phantom{\}}(n-1,1,\emptyset,\ldots,\emptyset),\ldots,(n-1,\emptyset,\ldots,\emptyset,1).
\end{align*}
These are precisely the partition tuples of the form $(n-1,\emptyset,\ldots,\emptyset) \cup \umu$ where $\umu \in \Lambda_1(C_r)$ together with the partition tuple $(n,\emptyset,\ldots,\emptyset)$ which indexes the trivial character. As $C_r \wr S_1 \cong C_r$, we find that all characters $\chi^\umu$ with $\umu \in \Lambda_1(C_r)$ are $1$-dimensional. Hence, we find that
\[
	C^{(1/r)}_1(\theta) = \xi_1 - \chi^{(n,\emptyset,\ldots,\emptyset)} = \sum_{\umu \in \Lambda_1(C_r)} \deg(\chi^\umu) \,\chi^{(n-1,\emptyset,\ldots,\emptyset) \cup \umu} = F_1^{(r)}.
\]
Now let $k \in \N$ with $1 \leq k \leq (n-2)/2$. Since $r\theta = \xi_1 = 1\uparrow_{C_r \wr S_{n-1}}^{C_r \wr S_n}$, we first investigate the product $\xi_1 \chi^{(n-k,\emptyset,\ldots,\emptyset) \cup \ulambda}$ for some $\ulambda \in \Lambda_k(C_r)$. From Theorem~\ref{thm:induction_of_product_and_restriction}, we find
\begin{align*}
	\xi_1 \chi^{(n-k,\emptyset,\ldots,\emptyset) \cup \ulambda} =& \left(1\uparrow_{C_r \wr S_{n-1}}^{C_r \wr S_n}\right) \chi^{(n-k,\emptyset,\ldots,\emptyset) \cup \ulambda} \\
	=& \left(1 \cdot \chi^{(n-k,\emptyset,\ldots,\emptyset) \cup \ulambda}\downarrow_{C_r \wr S_{n-1}}^{C_r \wr S_n}\right)\uparrow_{C_r \wr S_{n-1}}^{C_r \wr S_n} \\
	=& \left(\chi^{(n-k,\emptyset,\ldots,\emptyset) \cup \ulambda}\downarrow_{C_r \wr S_{n-1}}^{C_r \wr S_n}\right)\uparrow_{C_r \wr S_{n-1}}^{C_r \wr S_n}.
\end{align*}
The restriction of the character $\chi^{(n-k,\emptyset,\ldots,\emptyset) \cup \ulambda}$ to $C_r \wr S_{n-1}$ can be computed using the branching rule (Theorem~\ref{thm:branching_generalised_symmetric_group}). Notice that because we assume $k \leq (n-2)/2$, it follows that $k < n-k$. Hence, the partition tuple $\ulambda$ cannot contain a part of size $n-k$. It follows that when using the branching rule for the partition tuple $(n-k,\emptyset,\ldots,\emptyset) \cup \ulambda$, we may decrease the part of size $n-k$ and the parts of $\ulambda$ independently. Thus, we obtain
\[
	\chi^{(n-k,\emptyset,\ldots,\emptyset) \cup \ulambda}\downarrow_{C_r \wr S_{n-1}}^{C_r \wr S_n} = \chi^{(n-k-1,\emptyset,\ldots,\emptyset) \cup \ulambda} + \sum_{\umu \in \ulambda^-}\chi^{(n-k,\emptyset,\ldots,\emptyset) \cup \umu}.
\]
Similarly, we can compute the induction of the characters using the branching rule. Notice that we have $k < n-k-1$. It follows that when using the branching rule for the partition tuple $(n-k-1,\emptyset,\ldots,\emptyset) \cup \ulambda$, we may increase the part of size $n-k-1$ and the parts of $\ulambda$ independently. Thus, we obtain
\[
	\chi^{(n-k-1,\emptyset,\ldots,\emptyset) \cup \ulambda}\uparrow_{C_r \wr S_{n-1}}^{C_r \wr S_n} = \chi^{(n-k,\emptyset,\ldots,\emptyset) \cup \ulambda} + \sum_{\umu \in \ulambda^+}\chi^{(n-k-1,\emptyset,\ldots,\emptyset) \cup \umu}
\]
and
\begin{align*}
	&\left(\sum_{\umu \in \ulambda^-}\chi^{(n-k,\emptyset,\ldots,\emptyset) \cup \umu}\right)\uparrow_{C_r \wr S_{n-1}}^{C_r \wr S_n}\\
	=&\sum_{\umu \in \ulambda^-}\left(\chi^{(n-k,\emptyset,\ldots,\emptyset) \cup \umu}\right)\uparrow_{C_r \wr S_{n-1}}^{C_r \wr S_n}\\
	=&\sum_{\umu \in \ulambda^-}\chi^{(n-k+1,\emptyset,\ldots,\emptyset) \cup \umu} + \sum_{\umu \in \ulambda^-}\sum_{\underline{\tau} \in \umu^+}\chi^{(n-k,\emptyset,\ldots,\emptyset) \cup \underline{\tau}}.
\end{align*}
Putting it all together, we find
\begin{align*}
	&\phantom{+}\;\xi_1 \chi^{(n-k,\emptyset,\ldots,\emptyset) \cup \ulambda} = \;\left(\chi^{(n-k,\emptyset,\ldots,\emptyset) \cup \ulambda}\downarrow_{C_r \wr S_{n-1}}^{C_r \wr S_n}\right)\uparrow_{C_r \wr S_{n-1}}^{C_r \wr S_n}\\
	=&\phantom{+}\;\chi^{(n-k,\emptyset,\ldots,\emptyset) \cup \ulambda}\\
	&+ \sum_{\umu \in \ulambda^+}\chi^{(n-k-1,\emptyset,\ldots,\emptyset) \cup \umu}\\
	&+ \sum_{\umu \in \ulambda^-}\chi^{(n-k+1,\emptyset,\ldots,\emptyset) \cup \umu}\\
	&+ \sum_{\umu \in \ulambda^-}\sum_{\underline{\tau} \in \umu^+}\chi^{(n-k,\emptyset,\ldots,\emptyset) \cup \underline{\tau}}.
\end{align*}
Now we can compute the product $r\theta F_k^{(r)}$ and obtain
\begin{align*}
	&\phantom{+}\; r\theta F_k^{(r)} = \xi_1 F_k^{(r)} = \sum_{\ulambda \in \Lambda_k(C_r)} \deg(\chi^\ulambda) \,\xi_1\chi^{(n-k,\emptyset,\ldots,\emptyset) \cup \ulambda}\\
	=&\phantom{+}\;\sum_{\ulambda \in \Lambda_k(C_r)} \deg(\chi^\ulambda) \,\chi^{(n-k,\emptyset,\ldots,\emptyset) \cup \ulambda}\\
	&+ \sum_{\ulambda \in \Lambda_k(C_r)} \deg(\chi^\ulambda) \,\sum_{\umu \in \ulambda^+}\chi^{(n-k-1,\emptyset,\ldots,\emptyset) \cup \umu}\\
	&+ \sum_{\ulambda \in \Lambda_k(C_r)} \deg(\chi^\ulambda) \,\sum_{\umu \in \ulambda^-}\chi^{(n-k+1,\emptyset,\ldots,\emptyset) \cup \umu}\\
	&+ \sum_{\ulambda \in \Lambda_k(C_r)} \deg(\chi^\ulambda) \,\sum_{\umu \in \ulambda^-}\sum_{\underline{\tau} \in \umu^+}\chi^{(n-k,\emptyset,\ldots,\emptyset) \cup \underline{\tau}}\\
	=&\phantom{+}\; F_k^{(r)} +A + B +C
\end{align*}
where 
\begin{align*}
	A &= \sum_{\ulambda \in \Lambda_k(C_r)} \deg(\chi^\ulambda) \,\sum_{\umu \in \ulambda^+}\chi^{(n-k-1,\emptyset,\ldots,\emptyset) \cup \umu},\\
	B &= \sum_{\ulambda \in \Lambda_k(C_r)} \deg(\chi^\ulambda) \,\sum_{\umu \in \ulambda^-}\chi^{(n-k+1,\emptyset,\ldots,\emptyset) \cup \umu},\\
	C &= \sum_{\ulambda \in \Lambda_k(C_r)} \deg(\chi^\ulambda) \,\sum_{\umu \in \ulambda^-}\sum_{\underline{\tau} \in \umu^+}\chi^{(n-k,\emptyset,\ldots,\emptyset) \cup \underline{\tau}}.
\end{align*}
In view of Equation~\eqref{eqn:recurrence_for_Fk}, it remains to show that $A = F_{k+1}^{(r)}$, $B = rkF_{k-1}^{(r)}$ and $C = rkF_k^{(r)}$.

First, notice that summing over all pairs $(\ulambda,\umu)$ where $\ulambda \in \Lambda_k(C_r)$ and $\umu \in \ulambda^+$ is equivalent to summing over all pairs $(\ulambda,\umu)$ where $\umu \in \Lambda_{k+1}(C_r)$ and $\ulambda \in \umu^-$. This is easily seen by double counting the set of all pairs $(\ulambda,\umu)$ such that $\ulambda \in \Lambda_{k}(C_r)$, $\umu \in \Lambda_{k+1}(C_r)$, $\ulambda(g) \subseteq \umu(g)$ for some $g \in C_r$ and $\ulambda(h) = \umu(h)$ for all $h \neq g$. It follows that
\begin{align*}
	A &= \sum_{\ulambda \in \Lambda_k(C_r)} \deg(\chi^\ulambda) \,\sum_{\umu \in \ulambda^+}\chi^{(n-k-1,\emptyset,\ldots,\emptyset) \cup \umu}\\
	&= \sum_{\ulambda \in \Lambda_k(C_r)} \sum_{\umu \in \ulambda^+}\,\deg(\chi^\ulambda) \,\chi^{(n-k-1,\emptyset,\ldots,\emptyset) \cup \umu}\\
	&= \sum_{\umu \in \Lambda_{k+1}(C_r)} \sum_{\ulambda \in \umu^-}\,\deg(\chi^\ulambda) \,\chi^{(n-k-1,\emptyset,\ldots,\emptyset) \cup \umu}\\
	&= \sum_{\umu \in \Lambda_{k+1}(C_r)} \left(\sum_{\ulambda \in \umu^-}\,\deg(\chi^\ulambda) \right) \chi^{(n-k-1,\emptyset,\ldots,\emptyset) \cup \umu}.
\end{align*}
Now let $\umu \in \Lambda_{k+1}(C_r)$ and recall that $\deg(\chi^\umu) = \chi^\umu (e)$. Since restricting a character does not change its dimension, we find from the branching rule that
\[
	\deg(\chi^\umu) = \deg\left(\chi^\umu \downarrow_{C_r \wr S_{k}}^{C_r \wr S_{k+1}}\right) = \left(\chi^\umu \downarrow_{C_r \wr S_{k}}^{C_r \wr S_{k+1}}\right)(e) =  \sum_{\ulambda \in \umu^-} \chi^\ulambda \,(e) = \sum_{\ulambda \in \umu^-}\,\deg(\chi^\ulambda).
\]
Thus, we deduce that $A = F_{k+1}^{(r)}$.

Similarly, notice that summing over all pairs $(\ulambda,\umu)$ where $\ulambda \in \Lambda_k(C_r)$ and $\umu \in \ulambda^-$ is equivalent to summing over all pairs $(\ulambda,\umu)$ where $\umu \in \Lambda_{k-1}(C_r)$ and $\ulambda \in \umu^+$. It follows that
\begin{align*}
	B &= \sum_{\ulambda \in \Lambda_k(C_r)} \deg(\chi^\ulambda) \,\sum_{\umu \in \ulambda^-}\chi^{(n-k+1,\emptyset,\ldots,\emptyset) \cup \umu}\\
	&= \sum_{\ulambda \in \Lambda_k(C_r)} \sum_{\umu \in \ulambda^-}\,\deg(\chi^\ulambda) \,\chi^{(n-k+1,\emptyset,\ldots,\emptyset) \cup \umu}\\
	&= \sum_{\umu \in \Lambda_{k-1}(C_r)} \sum_{\ulambda \in \umu^+}\,\deg(\chi^\ulambda) \,\chi^{(n-k+1,\emptyset,\ldots,\emptyset) \cup \umu}\\
	&= \sum_{\umu \in \Lambda_{k-1}(C_r)} \left(\sum_{\ulambda \in \umu^+}\,\deg(\chi^\ulambda) \right)\chi^{(n-k+1,\emptyset,\ldots,\emptyset) \cup \umu}.
\end{align*}
Now let $\umu \in \Lambda_{k-1}(C_r)$. Since inducing a character from $C_r \wr S_{k-1}$ to $C_r \wr S_k$ changes its degree by $\frac{|C_r \wr S_k|}{|C_r \wr S_{k-1}|} =rk$, we find that
\[
	rk\deg(\chi^\umu) = \deg\left(\chi^\umu \uparrow_{C_r \wr S_{k-1}}^{C_r \wr S_{k}}\right) = \left(\chi^\umu \uparrow_{C_r \wr S_{k-1}}^{C_r \wr S_{k}}\right)(e) =  \sum_{\ulambda \in \umu^+} \chi^\ulambda \,(e) = \sum_{\ulambda \in \umu^+}\,\deg(\chi^\ulambda).
\]
Thus, we deduce that $B = rkF_{k-1}^{(r)}$.

Finally, we turn to $C$. Notice that summing over all triples $(\ulambda,\umu,\underline{\tau})$ with $\ulambda \in \Lambda_k(C_r)$ such that $\umu \in \ulambda^-$ and $\underline{\tau} \in \umu^+$ is equivalent to summing over all triples $(\ulambda,\umu,\underline{\tau})$ with $\underline{\tau} \in \Lambda_k(C_r)$ such that $\umu \in \underline{\tau}^-$ and $\ulambda \in \umu^+$. It follows that
\begin{align*}
	C &= \sum_{\ulambda \in \Lambda_k(C_r)} \deg(\chi^\ulambda) \,\sum_{\umu \in \ulambda^-}\sum_{\underline{\tau} \in \umu^+}\chi^{(n-k,\emptyset,\ldots,\emptyset) \cup \underline{\tau}}\\
	&= \sum_{\ulambda \in \Lambda_k(C_r)} \sum_{\umu \in \ulambda^-}\sum_{\underline{\tau} \in \umu^+}\,\deg(\chi^\ulambda) \,\chi^{(n-k,\emptyset,\ldots,\emptyset) \cup \underline{\tau}}\\
	&= \sum_{\underline{\tau} \in \Lambda_k(C_r)}  \sum_{\umu \in \underline{\tau}^-}\sum_{\ulambda \in \umu^+}\,\deg(\chi^\ulambda)\,\chi^{(n-k,\emptyset,\ldots,\emptyset) \cup \underline{\tau}}\\
	&= \sum_{\underline{\tau} \in \Lambda_k(C_r)}  \left(\sum_{\umu \in \underline{\tau}^-}\sum_{\ulambda \in \umu^+}\,\deg(\chi^\ulambda)\right)\chi^{(n-k,\emptyset,\ldots,\emptyset) \cup \underline{\tau}}.
\end{align*}
Now let $\underline{\tau} \in \Lambda_{k}(C_r)$. As before, we find
\begin{align*}
	rk \deg(\chi^{\underline{\tau}}) &= \deg\left(\chi^{\underline{\tau}} \downarrow_{C_r \wr S_{k-1}}^{C_r \wr S_{k}}\uparrow_{C_r \wr S_{k-1}}^{C_r \wr S_{k}}\right) = \left(\chi^{\underline{\tau}} \downarrow_{C_r \wr S_{k-1}}^{C_r \wr S_{k}}\uparrow_{C_r \wr S_{k-1}}^{C_r \wr S_{k}}\right)(e)\\
	 &=  \sum_{\umu \in \underline{\tau}^-}\sum_{\ulambda \in \umu^+} \chi^\ulambda \,(e) = \sum_{\umu \in \underline{\tau}^-}\sum_{\ulambda \in \umu^+}\,\deg(\chi^\ulambda).
\end{align*}
Thus, we deduce that $C = rkF_{k}^{(r)}$ which completes the proof.
\end{proof}
In other words, Theorem~\ref{thm:decomposition_Charlier_chars} implies that for $0 \leq k \leq n/2$, the character~$C^{(1/r)}_k(\theta)$ decomposes into those characters $\chi^{\ulambda}$ such that the biggest part of~$\ulambda(0)$ is $n-k$. Furthermore, notice that the case $r=1$ in Theorem~\ref{thm:decomposition_Charlier_chars} gives exactly the statement of Theorem~7 in \cite{Tar99}.

We get the following corollary from Theorem~\ref{thm:decomposition_Charlier_chars}.
\begin{corollary}
For $0 \leq k , \ell \leq n/2$, we have
\[
	\langle C_k^{(1/r)}(\theta),C_\ell^{(1/r)}(\theta) \rangle = \delta_{k \ell} r^k k!.
\]
\end{corollary}

\begin{proof}
We find from Theorem~\ref{thm:decomposition_Charlier_chars} that
\begin{align*}
	&\langle C_k^{(1/r)}(\theta),C_\ell^{(1/r)}(\theta) \rangle \\
	=& \sum_{\ulambda \in \Lambda_k(C_r)} \sum_{\umu \in \Lambda_\ell(C_r)} \deg(\chi^\ulambda)\deg(\chi^\umu)\,\langle \chi^{(n-k,\emptyset,\ldots,\emptyset) \cup \ulambda},\chi^{(n-\ell,\emptyset,\ldots,\emptyset) \cup \umu} \rangle.
\end{align*}
Since $k,\ell \leq n/2$, we have that $n-k,n-\ell \geq n/2$ and thus $n-k$ and $n - \ell$ are the biggest parts of the partitions in the entry indexed by $0$. Thus, the inner products can only be non-zero if $n-k = n-\ell$, equivalently $k = \ell$. In this case, we find that
\begin{align*}
	&\sum_{\ulambda \in \Lambda_k(C_r)} \sum_{\umu \in \Lambda_k(C_r)} \deg(\chi^\ulambda)\deg(\chi^\umu)\,\langle \chi^{(n-k,\emptyset,\ldots,\emptyset) \cup \ulambda},\chi^{(n-k,\emptyset,\ldots,\emptyset) \cup \umu} \rangle\\
	=&\sum_{\ulambda \in \Lambda_k(C_r)} \sum_{\umu \in \Lambda_k(C_r)} \deg(\chi^\ulambda)\deg(\chi^\umu)\,\delta_{\ulambda\,\umu}\\
	=& \sum_{\ulambda \in \Lambda_k(C_r)}\deg(\chi^\ulambda)^2.
\end{align*}
Now the well-known formula
\[
	\sum_{\chi \in \text{Irr}(G)} \deg(\chi)^2 = |G|
\]
finishes the proof.
\end{proof}

\begin{remark}
Observe that $\xi_k$ is the induction of the trivial character of the subgroup $\left(S_1\right)^k\times (C_r\wr S_{n-k})$ to $C_r\wr S_n$. An application of Theorem~\ref{thm:decomposition_perm_char} with $\usigma\in\Sigma_n(C_r)$ given by $\usigma(\{0\})=(1^k)$ and $\usigma(C_r\wr S_n)=(n-k)$ therefore gives\[
\ang{\xi_k,\chi^\ulambda}\ne 0\;\Longleftrightarrow\; ((1^k),n-k) \to \ulambda
\]
for each $\ulambda\in\Lambda_n(C_r)$. From the definition of the relation $\to$, it is easy to see that the elements $\ulambda$ satisfying $((1^k),n-k) \to \ulambda$ are precisely those satisfying $\ulambda(0)_1 \geq n-k$.

It follows from Theorem~\ref{thm:decomposition_Charlier_chars} that for $0 \leq k \leq n/2$, inputting $\theta$ into the $k$-th Charlier polynomial $C_k^{(1/r)}$ gives the irreducible characters $\chi^\ulambda$ that satisfy $\ulambda(0)_1 = n-k$. These are precisely the irreducible constituents $\chi^{\ulambda}$ appearing in the decomposition of $\xi_k$ but not in the decomposition of $\xi_{k-1}$.
\end{remark}

\subsection{Designs and Codes}

Henceforth, we call a $((1^t),n-t)$-transitive set $Y$ of $C_r \wr S_n$ a \emph{$t$-design}. In the symmetric group, that is the case $r=1$, a $t$-design is simply a $t$-transitive subset. The \emph{index} of $Y$ is the number of elements in $Y$ that stabilise an arbitrary $((1^t),n-t)$-tabloid (so the index is the constant $c$ in the definition of a transitive set). Thus, a $t$-design is transitive on the $t$-tuples $((c_1,i_1),\ldots,(c_t,i_t))$ where the elements $c_k \in [r]$ are arbitrary and the elements $i_k \in [n]$ are pairwise distinct. We call a $t$-design of index $1$ a \emph{sharply $t$-transitive} set.

We also call a $((1^{n-d+1}),d-1)$-clique $Y$ of $C_r \wr S_n$ a \emph{$d$-code}. Hence, for any distinct $x,y \in Y$ we have that $x^{-1}y$ does not stabilise a $((1^{n-d+1}),d-1)$-tabloid. This implies that for any distinct $x,y \in Y$ there are at least $d$ elements $i \in [n]$ such that $x$ and $y$ map $(0,i)$ to different elements.

We can also give a geometric interpretation of $t$-designs.
\begin{example}\label{ex:geometric t designs}
Let $G = C_2$ and $n=3$. Then $W=C_2\wr S_3$ is the symmetry group of a $3$-dimensional cube. For a $1$-design, we need to study the subgroup $H = C_2 \wr S_2$. Geometrically, $H$ is the stabiliser of a face of the cube. Thus, $Y \subseteq C_2 \wr S_3$ is a $1$-design if and only if $Y$ is transitive on the six faces of a $3$-dimensional cube.

We can extend this example to $t$-designs and arbitrary $n \in \N$. Then we consider the subgroup $H = (S_1)^t \times C_2 \wr S_{n-t} = C_2 \wr S_{n-t}$. Here, $H$ is the stabiliser of a chain of faces $F_0 \subseteq F_1 \subseteq \ldots \subseteq F_{t}$ of the cube where $F_i$ has dimension $n-t+i$. In other words, $H$ stabilises a $((1^t),n-t)$-flag (see Chapter~\ref{sec:intro}). Thus, $Y \subseteq C_2 \wr S_n$ is a $t$-design if and only if $Y$ is transitive on the set of $((1^t),n-t)$-flags of the $n$-dimensional cube. Similar interpretations can be given for other regular polytopes.
\end{example}

Using the results from Section~\ref{sec:notions of transitivity}, we can characterise codes and designs in terms of the inner and dual distribution, respectively.
\begin{corollary}\label{cor:designs_codes_zero_distributions}
Let $Y$ be a subset of $C_r\wr S_n$ with inner distribution $(a_\umu)$ and dual distribution $(a'_{\ulambda})$. Then $Y$ is a $t$-design if and only if
\[
a'_{\ulambda}=0\quad\text{for each $\ulambda\in\Lambda_n(C_r)$ satisfying $n-t\le \ulambda(0)_1<n$},
\]
and a $d$-code if and only if
\[
a_{\umu}=0\quad\text{for each $\umu\in\Lambda_n(C_r)$ satisfying $n-d+1\le\umu(0)'_1 - \umu(0)'_2<n$}.
\]
\end{corollary}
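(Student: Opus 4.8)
The plan is to derive both statements from the characterisations already established in Sections~\ref{sec:notions of transitivity} and~\ref{sec:polys_and_designs}. For the design part, recall that a $t$-design is by definition a $((1^t),n-t)$-transitive set. By Theorem~\ref{thm:characterisation_designs}, $Y$ is $((1^t),n-t)$-transitive if and only if $a'_{\ulambda} = 0$ for every $\ulambda \in \Lambda_n(C_r)$ with $((1^t),n-t) \to \ulambda$ and $\ulambda(0) \neq (n)$. So the task reduces to identifying exactly which $\ulambda$ satisfy $((1^t),n-t) \to \ulambda$. This is precisely the computation carried out inside the proof of Theorem~\ref{thm:decomposition_Charlier_chars}: colouring the $t$ singleton boxes of $\usigma(\{0\})$ with arbitrary elements of $\{0\}^\circ = C_r$ and the $n-t$ boxes of $\usigma(C_r)$ with $0$ (forced, since $C_r^\circ = \{0\}$) produces, after moving up boxes, exactly those $\ulambda$ with $\ulambda(0)_1 \geq n-t$. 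First I would record this fact, then observe that excluding $\ulambda(0) = (n)$ (which corresponds to $\ulambda(0)_1 = n$ together with $\ulambda$ supported only at $0$) is equivalent to the condition $\ulambda(0)_1 < n$, since whenever $\ulambda(0)_1 = n$ we must have $|\ulambda(0)| = n$ and hence $\ulambda(0) = (n)$ and $\ulambda(g) = \emptyset$ for $g \neq 0$. Combining gives the stated range $n - t \leq \ulambda(0)_1 < n$.

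For the code part, a $d$-code is by definition a $((1^{n-d+1}),d-1)$-clique, so by Theorem~\ref{thm:characterisation_cliques} it suffices to show that $\umu \in C(\usigma)$ with $\umu(0) \neq (1^n)$ holds exactly when $n - d + 1 \leq \umu(0)'_1 - \umu(0)'_2 < n$, where $\usigma$ is the parabolic type of shape $((1^{n-d+1}), (d-1))$. Here $H_{\usigma} = (S_1)^{n-d+1} \times C_r \wr S_{d-1} \cong C_r \wr S_{d-1}$, so by Theorem~\ref{thm:cc_stabiliser}, $C(\usigma)$ is the set of $\umu \in \Lambda_n(C_r)$ arising from an element $\unu \in \Lambda_{d-1}(C_r)$ via the identification $\umu(0) = \unu(0) \cup 1^{n-d+1}$ and $\umu(g) = \unu(g)$ for $g \neq 0$. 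The key point is to translate "$\umu$ is obtained by adjoining $n-d+1$ parts of size $1$ to $\umu(0)$ and the rest of $\umu$ accounts for at most $d-1$ boxes in total" into a statement about the first two columns of $\umu(0)$. Adjoining $n-d+1$ ones to a partition increases its first-column length $\umu(0)'_1$ by $n-d+1$ while leaving the second-column length $\umu(0)'_2$ unchanged; conversely $\umu \in C(\usigma)$ iff $\umu(0)$ has at least $n-d+1$ parts equal to $1$, i.e. iff $\umu(0)'_1 - \umu(0)'_2 \geq n-d+1$ (the number of size-$1$ parts of $\umu(0)$ is exactly $\umu(0)'_1 - \umu(0)'_2$), together with the total-size bookkeeping which is automatic since $|\umu| = n$. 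The exclusion $\umu(0) \neq (1^n)$ is equivalent to $\umu(0)'_1 - \umu(0)'_2 < n$ by the same reasoning as in the design case: $\umu(0)'_1 - \umu(0)'_2 = n$ forces $\umu(0) = (1^n)$ and $\umu$ trivial elsewhere. This yields the stated range.

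The main obstacle, such as it is, is the combinatorial bookkeeping in the code part: one must check carefully that the condition "$\unu$ exists in $\Lambda_{d-1}(C_r)$ with $\umu(0) = \unu(0) \cup 1^{n-d+1}$" is genuinely equivalent to the single inequality on $\umu(0)'_1 - \umu(0)'_2$, rather than requiring an additional constraint on $\sum_{g \neq 0}|\umu(g)|$ or on the non-unit parts of $\umu(0)$. The resolution is that $|\umu| = n$ is fixed, so once $\umu(0)$ has at least $n-d+1$ parts of size $1$, the remaining $d-1$ boxes (distributed among the non-unit parts of $\umu(0)$ and among $\umu(g)$ for $g \neq 0$) automatically assemble into a valid $\unu \in \Lambda_{d-1}(C_r)$; no further condition is needed. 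I would state this equivalence as a short lemma-free paragraph and then the corollary follows immediately by invoking Theorems~\ref{thm:characterisation_designs} and~\ref{thm:characterisation_cliques}.
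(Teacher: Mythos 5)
Your proposal is correct and follows essentially the same route as the paper: both parts reduce to Theorems~\ref{thm:characterisation_designs} and~\ref{thm:characterisation_cliques}, with the design condition identified via the observation that $((1^t),n-t)\to\ulambda$ exactly when $\ulambda(0)_1\ge n-t$, and the code condition via Theorem~\ref{thm:cc_stabiliser} and the fact that $\umu(0)'_1-\umu(0)'_2$ counts the parts of size $1$ in $\umu(0)$. Your extra bookkeeping remarks (the equivalence of $\ulambda(0)\ne(n)$ with $\ulambda(0)_1<n$, and the automatic existence of $\unu\in\Lambda_{d-1}(C_r)$ once enough unit parts are present) are accurate and match what the paper treats implicitly.
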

\begin{proof}
The first part follows from Theorem~\ref{thm:characterisation_designs}. Note that for $\usigma \in \Sigma_n(G)$ given by $\usigma(\{0\}) = (1^t)$ and $\usigma(G) = (n-t)$, we have $\usigma \to \ulambda$ if and only if $\ulambda(0)_1 \geq n-t$, as was already observed before.

The second part follows from Theorem~\ref{thm:characterisation_cliques}. We have to compute the set $C(\usigma)$ for $\usigma = ((1^{n-d+1}),d-1)$. First, note that the conjugacy classes of $C_r \wr S_n$ appearing in the subgroup $C_r \wr S_{d-1}$ are exactly the classes indexed by the set $\Lambda_{d-1}(C_r)$. Moreover, the trivial subgroup $S_1$ only has the trivial conjugacy class $\umu \in \Lambda_1(C_r)$ with $\umu (0)=1$ and $\umu(g) = \emptyset$ otherwise. Hence, Theorem~\ref{thm:cc_stabiliser} gives that the elements of $C(\usigma)$ are precisely the elements $\umu \in \Lambda_n(C_r)$ satisfying
\[
	\umu = ((1^{n-d+1}),\emptyset,\ldots,\emptyset) \cup \ulambda
\]
for some element $\ulambda \in \Lambda_{d-1}(C_r)$. Taking duals, we see that these are precisely the elements $\umu\in\Lambda_n(G)$ with $\umu(0)'_1 - \umu(0)'_2 \geq n-d+1$.
\end{proof}
Note that $d(x,y)=n-\theta(x^{-1}y)$ defines a metric on $C_r\wr S_n$. We can therefore define codes and a distance distribution in the usual way. For a non-empty subset $Y$ of $C_r \wr S_n$, we define the \emph{distance distribution} to be the tuple $(A_i)_{0\leq i \leq n}$, where
\[
	A_i = \frac{1}{|Y|}\left| \{(x,y) \in Y \times Y : d(x,y) = i\} \right|,
\]
and the \emph{dual distance distribution} of $Y$ to be the tuple $(A'_k)_{0 \leq k \leq n}$, where
\[
	A'_k=\sum_{i=0}^n C^{(1/r)}_k(n-i)A_i,
\]
or equivalently
\begin{equation}\label{eqn:def_dual_distance}
A'_k=\frac{1}{\abs{Y}}\sum_{x,y\in Y}C^{(1/r)}_k(\theta(x^{-1}y))=\frac{1}{\abs{Y}}\sum_{x,y\in Y}C^{(1/r)}_k(n-d(x,y)).
\end{equation}
We can characterise $t$-designs in terms of zeroes in the dual distance distribution.
\begin{proposition}
\label{pro:t-designs_dual_dist}
Let $Y$ be a subset of $C_r\wr S_n$ with dual distance distribution $(A'_k)$ and let $t$ be an integer satisfying $1\le t\le n$. If $Y$ is a $t$-design, then $A'_k=0$ for all $k$ satisfying $1\le k\le t$. Moreover, the converse also holds if $t\le n/2$. That is, if $t\le n/2$ and $A'_k=0$ for all $k$ satisfying $1\le k\le t$, then $Y$ is a $t$-design.
\end{proposition}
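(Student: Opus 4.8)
The plan is to translate the dual \emph{distance} distribution $(A'_k)$ of $Y$ into the dual distribution $(a'_\ulambda)$ of $Y$ in the conjugacy class scheme of $C_r\wr S_n$. Since $C^{(1/r)}_k(\theta)$ is a class function it expands as $C^{(1/r)}_k(\theta)=\sum_{\ulambda\in\Lambda_n(C_r)}\bigang{C^{(1/r)}_k(\theta),\chi^{\ulambda}}\,\chi^{\ulambda}$; substituting this into \eqref{eqn:def_dual_distance} and comparing with the character formula \eqref{eqn:dual_distribution_characters_general} for $a'_\ulambda$, I would first record the bridge identity
\[
A'_k=\sum_{\ulambda\in\Lambda_n(C_r)}\bigang{C^{(1/r)}_k(\theta),\chi^{\ulambda}}\,\frac{a'_\ulambda}{\deg(\chi^{\ulambda})}.
\]
Both implications are then obtained by feeding in what is already known about the two sides.

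For the forward direction — which I expect to hold for all $t$, not just $t\le n/2$ — I would avoid the decomposition of $C^{(1/r)}_k(\theta)$ (only controlled for $k\le n/2$) and instead expand via the permutation characters using \eqref{eqn:C_from_xi}:
\[
A'_k=\frac1{\abs{Y}}\sum_{j=0}^{k}(-1)^{k-j}\binom{k}{j}\sum_{x,y\in Y}\xi_j(x^{-1}y).
\]
If $Y$ is a $t$-design then it is also a $j$-design for every $j\le t$, because $((1^t),n-t)\succeq((1^j),n-j)$ by Theorem~\ref{thm:designs_parabolic}; hence $Y$ is transitive on the $j$-tuples for all $j\le k\le t$. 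For a subset $Y$ transitive on a set $\Omega$ with permutation character $\xi_\Omega$ one has $\frac1{\abs{Y}}\sum_{x,y\in Y}\xi_\Omega(x^{-1}y)=\abs{Y}$: expanding the left-hand side into irreducible constituents of $\xi_\Omega$ and applying Theorem~\ref{thm:general_method} kills every term except the trivial one, which contributes $\abs{Y}$. Inserting $\sum_{x,y\in Y}\xi_j(x^{-1}y)=\abs{Y}^2$ for $0\le j\le k$ gives $A'_k=\abs{Y}\sum_{j=0}^{k}(-1)^{k-j}\binom{k}{j}=0$ for every $k$ with $1\le k\le t$.

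For the converse I would use the bridge identity together with Theorem~\ref{thm:decomposition_Charlier_chars}, which applies because $k\le t\le n/2$: the coefficient $\bigang{C^{(1/r)}_k(\theta),\chi^{\ulambda}}$ is non-negative, and nonzero precisely when $\ulambda(0)_1=n-k$. Since $a'_\ulambda\ge 0$ by \eqref{eqn:dual_distr_orthogonality_general}, the identity writes $A'_k$ as a sum of non-negative terms over $\{\ulambda:\ulambda(0)_1=n-k\}$, so $A'_k=0$ forces $a'_\ulambda=0$ for all such $\ulambda$. Letting $k$ run through $1,\dots,t$ yields $a'_\ulambda=0$ whenever $n-t\le\ulambda(0)_1\le n-1$, which by Corollary~\ref{cor:designs_codes_zero_distributions} is exactly the condition that $Y$ be a $t$-design.

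There is no deep obstacle here: once the bridge identity is in place, everything reduces to the non-negativity inputs and a trivial binomial sum. The point needing care is that Theorem~\ref{thm:decomposition_Charlier_chars} — both the positivity of the coefficients and the description $\ulambda(0)_1=n-k$ of their support — is available only for $k\le n/2$, which is precisely why the converse requires $t\le n/2$, whereas the forward direction uses only the expansion \eqref{eqn:C_from_xi} into the $\xi_j$ and so carries no such restriction. I would also make sure to justify the auxiliary computation $\frac1{\abs{Y}}\sum_{x,y\in Y}\xi_\Omega(x^{-1}y)=\abs{Y}$ cleanly, since it is the one step where transitivity on the lower tuple sets actually enters.
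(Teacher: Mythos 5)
Your proposal is correct and follows essentially the same route as the paper: the forward direction expands $A'_k$ via \eqref{eqn:C_from_xi} into the permutation characters $\xi_j$, shows each inner sum equals $\abs{Y}^2$ for $j\le t$, and collapses the alternating binomial sum, while the converse uses the non-negativity and support description of Theorem~\ref{thm:decomposition_Charlier_chars} together with $a'_\ulambda\ge 0$ and Corollary~\ref{cor:designs_codes_zero_distributions}. The only (harmless) cosmetic difference is that you justify $\sum_{x,y\in Y}\xi_j(x^{-1}y)=\abs{Y}^2$ by noting a $t$-design is a $j$-design (Theorem~\ref{thm:designs_parabolic}) and invoking Theorem~\ref{thm:general_method}, whereas the paper gets the same identity directly from the decomposition of $\xi_j$ and Corollary~\ref{cor:designs_codes_zero_distributions}.
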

\begin{proof}
First, assume that $Y \subseteq C_r \wr S_n$ is a $t$-design. From \eqref{eqn:def_dual_distance} and \eqref{eqn:C_from_xi}, we find that
\begin{equation}\label{eqn:dual_dist_permchar}
	A'_k = \frac{1}{|Y|}\sum_{j=0}^k (-1)^{k-j} \binom{k}{j}\sum_{x,y \in Y} \xi_j(x^{-1}y).
\end{equation}
By Theorem~\ref{thm:decomposition_perm_char}, the permutation character $\xi_j$ decomposes into those irreducible characters $\chi^{\ulambda}$ for which $\ulambda(0)_1 \geq n-j$. Moreover, since $\xi_j$ is a permutation character, it contains the trivial character $1_{C_r \wr S_n}$ as an irreducible constituent with multiplicity~$1$. From \eqref{eqn:dual_distribution_characters_general} and Corollary~\ref{cor:designs_codes_zero_distributions}, it then follows that for all $j$ with $0 \leq j \leq t$, the inner sum in \eqref{eqn:dual_dist_permchar} is
\[
	\sum_{x,y \in Y} \xi_j(x^{-1}y) = \sum_{x,y \in Y} 1_{C_r \wr S_n}(x^{-1}y)=|Y|^2.
\]
Hence, for all $k$ with $0 \leq k \leq t$, we have
\[
	A'_k = |Y| \sum_{j=0}^k (-1)^{k-j} \binom{k}{j} = |Y| (1+ (-1))^k = |Y| \delta_{k,0}.
\]
So we find that $A'_k = 0$ for all $k$ satisfying $1 \leq k \leq t$.

Now, for each $k$ with $0 \leq k \leq n/2$, we find from Theorem~\ref{thm:decomposition_Charlier_chars} that
\[
	C_k^{(1/r)}(\theta) = \sum_{\substack{\ulambda \in \Lambda_n(C_r)\;:\\\ulambda(0)_1 = n-k}} \deg(\chi^{\ulambda})\,\chi^{\ulambda}.
\]
Together with \eqref{eqn:def_dual_distance} and \eqref{eqn:dual_distribution_characters_general}, we have that
\[
	A'_k = \frac{1}{|Y|} \sum_{\substack{\ulambda \in \Lambda_n(C_r)\;:\\\ulambda(0)_1 = n-k}} \deg(\chi^{\ulambda})\sum_{x,y \in Y}\chi^{\ulambda}(x^{-1}y)  = \sum_{\substack{\ulambda \in \Lambda_n(C_r)\;:\\\ulambda(0)_1 = n-k}} a'_{\ulambda}.
\]
Suppose that $t \leq n/2$ and $A'_k = 0$ for all $k$ satisfying $1 \leq k \leq t$. Since the dual distribution $a'$ is non-negative, it follows that $a'_{\ulambda} = 0$ for all $\ulambda \in \Lambda_n(C_r)$ with $n-t \leq \ulambda(0)_1 < n$. Corollary~\ref{cor:designs_codes_zero_distributions} then implies that $Y$ is a $t$-design.
\end{proof}
We also have the following bounds.
\begin{corollary} \label{cor:bound_codes}
Let $Y$ be a subset of $C_r\wr S_n$ and let $d$ and $t$ be the largest integers such that $Y$ is a $d$-code and a $t$-design. Then
\[
r^t\,(n)_t\le\abs{Y}\le r^{n-d+1}\,(n)_{n-d+1}.
\]
Moreover, if equality holds in one of the bounds, then equality also holds in the other and this case happens if and only if $d=n-t+1$.
\end{corollary}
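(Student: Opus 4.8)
The plan is to read off both inequalities as instances of Theorem~\ref{thm:clique_design_bound} (the clique--coclique bound for the conjugacy class scheme), after identifying the two parabolic symbols involved and computing their indices. Recall that a $t$-design is by definition a $\usigma$-transitive set for $\usigma=((1^t),n-t)$, whose tabloid stabiliser is $H_{\usigma}=(S_1)^t\times(C_r\wr S_{n-t})\cong C_r\wr S_{n-t}$, so that $\abs{W}/\abs{H_\usigma}=r^n n!/(r^{n-t}(n-t)!)=r^t\,(n)_t$; dually, a $d$-code is a $\utau$-clique for $\utau=((1^{n-d+1}),d-1)$, with $\abs{W}/\abs{H_\utau}=r^n n!/(r^{d-1}(d-1)!)=r^{n-d+1}\,(n)_{n-d+1}$. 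Then Theorem~\ref{thm:clique_design_bound}(b) applied with $\usigma$ gives the lower bound $\abs{Y}\ge r^t\,(n)_t$, and Theorem~\ref{thm:clique_design_bound}(a) applied with $\utau$ gives the upper bound $\abs{Y}\le r^{n-d+1}\,(n)_{n-d+1}$.

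For the equality statement I would combine the equality clauses of Theorem~\ref{thm:clique_design_bound} with the elementary fact that a $((1^t),n-t)$-clique is precisely an $(n-t+1)$-code and a $((1^{n-d+1}),d-1)$-transitive set is precisely an $(n-d+1)$-design, both of which are immediate from the definitions by matching $n-d+1=t$. Hence equality in the lower bound holds if and only if $Y$ is also an $(n-t+1)$-code, and equality in the upper bound holds if and only if $Y$ is also an $(n-d+1)$-design. Next I would observe that being a $d$-code is equivalent to the minimum distance of $Y$ in the metric $d(x,y)=n-\theta(x^{-1}y)$ being at least $d$ (since $x^{-1}y$ stabilises a $((1^{n-d+1}),d-1)$-tabloid exactly when $\theta(x^{-1}y)\ge n-d+1$), so this property is downward closed in $d$ and the ``largest $d$'' of the statement is that minimum distance; likewise a $t$-design is a $t'$-design for every $t'\le t$ by Theorem~\ref{thm:designs_parabolic}, so the ``largest $t$'' is well behaved. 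It follows that equality in the lower bound forces $d\ge n-t+1$ and equality in the upper bound forces $t\ge n-d+1$, and both of these are the single inequality $n-d+1\le t$.

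To close the argument I would invoke the strict monotonicity of $a\mapsto r^a\,(n)_a=r^an!/(n-a)!$ on $\{0,1,\dots,n\}$, which holds since consecutive values have ratio $r(n-a)\ge r>1$ for $r\ge 2$. If $d=n-t+1$, then $n-d+1=t$, the two bounds coincide, and so $\abs{Y}=r^t\,(n)_t=r^{n-d+1}\,(n)_{n-d+1}$, i.e.\ equality holds in both. Conversely, if equality holds in, say, the lower bound, then $n-d+1\le t$ by the previous paragraph, and the chain $r^t\,(n)_t\le\abs{Y}\le r^{n-d+1}\,(n)_{n-d+1}\le r^t\,(n)_t$ together with strict monotonicity forces $n-d+1=t$ and equality throughout; the case of equality in the upper bound is symmetric. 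Thus equality in one bound is equivalent to equality in both, and to $d=n-t+1$.

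The only real work here is the bookkeeping of the last two paragraphs: correctly converting the extremality hypotheses (``largest $d$'', ``largest $t$'') into the inequality $n-d+1\le t$, and recognising that the two Delsarte-type bounds are attained simultaneously precisely at the self-dual configuration $n-d+1=t$. All the substantive content is already packaged in Theorem~\ref{thm:clique_design_bound}; beyond that, nothing is needed except the monotonicity of $a\mapsto r^a\,(n)_a$.
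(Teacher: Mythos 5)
Your proof is correct and takes essentially the same route as the paper, whose entire proof consists of applying Theorem~\ref{thm:clique_design_bound} to $\usigma=((1^t),n-t)$ and $\usigma=((1^{n-d+1}),d-1)$. Your extra bookkeeping for the equality clause (translating the equality conditions into ``$Y$ is an $(n-t+1)$-code'' resp.\ ``an $(n-d+1)$-design'', then using the maximality of $d$ and $t$ and the strict monotonicity of $a\mapsto r^a\,(n)_a$) correctly supplies the details the paper leaves implicit.
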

\begin{proof}
Use Theorem~\ref{thm:clique_design_bound} for $\usigma = ((1^t),n-t)$ and $\usigma = ((1^{n-d+1}),d-1)$.
\end{proof}
If we let $r=1$, then the upper bound in Corollary~\ref{cor:bound_codes} becomes a well-known bound for permutation codes (see \cite{BlaCohDez1979}). Moreover, the bounds in Corollary~\ref{cor:bound_codes} can be achieved by sharply $t$-transitive sets. A construction for sharply $2$-transitive sets is given in Section~\ref{sec:constructions}.

The last result is about distance distributions. It turns out that the distance distribution of a subset $Y$ of $C_r \wr S_n$ is uniquely determined, provided that $Y$ is a $t$-design and a $d$-code where $d \geq n-t$.
\begin{theorem}
\label{thm:inner_dist}
Suppose that $Y$ is a $t$-design and an $(n-t)$-code in $C_r\wr S_n$. Then the distance distribution $(A_i)$ of $Y$ satisfies
\[
A_{n-i}=\sum_{j=i}^t(-1)^{j-i}{\binom{j}{i}}{\binom{n}{j}}\bigg(\frac{\abs{Y}}{r^j(n)_j}-1\bigg)
\]
for each $i\in\{0,1,\dots,n-1\}$.
\end{theorem}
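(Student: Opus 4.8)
The plan is to use the orthogonality of the Charlier polynomials together with the characterisation of $t$-designs and $(n-t)$-codes to pin down the dual distance distribution $(A'_k)$ completely, and then invert the relation between $(A_i)$ and $(A'_k)$. First I would observe that since $Y$ is an $(n-t)$-code, Corollary~\ref{cor:designs_codes_zero_distributions} gives $a_{\umu}=0$ whenever $\umu(0)'_1-\umu(0)'_2\geq t+1$, i.e.\ the only conjugacy classes contributing to the distance distribution are those with $d(x,y)=n-\theta(x^{-1}y)\le t$, so $A_i=0$ for $i>t$, equivalently $A_{n-j}=0$ for $j>t$. Thus the distance distribution is supported on $\{n-t,n-t+1,\dots,n\}$, and we only need to determine the $t+1$ numbers $A_n,A_{n-1},\dots,A_{n-t}$.

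**Computing the dual distance distribution.** Next I would compute $A'_k$ for $0\le k\le t$. Since $Y$ is a $t$-design, the first half of Proposition~\ref{pro:t-designs_dual_dist} gives $A'_k=0$ for $1\le k\le t$, and clearly $A'_0=|Y|$ from \eqref{eqn:def_dual_distance} since $C_0^{(1/r)}\equiv 1$. But I also need the $A'_k$ for $t<k\le n$; for those I would argue directly from \eqref{eqn:def_dual_distance} and \eqref{eqn:xi_from_C} in the form \eqref{eqn:C_from_xi}: writing $A'_k=\frac{1}{|Y|}\sum_{j=0}^k(-1)^{k-j}\binom{k}{j}\sum_{x,y\in Y}\xi_j(x^{-1}y)$, and using that for a $t$-design with $j\le t$ the inner sum equals $|Y|^2$ (as in the proof of Proposition~\ref{pro:t-designs_dual_dist}, because $\xi_j$ restricted to the non-trivial constituents contributes $a'_{\ulambda}=0$). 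This determines $A'_k$ for $k\le t$ but not beyond. The cleaner route is instead to avoid the $A'_k$ for large $k$ altogether: since $(A_i)$ is supported on a set of size $t+1$, and the Charlier polynomials $C_0^{(1/r)},\dots,C_t^{(1/r)}$ evaluated at the points $n,n-1,\dots,n-t$ form an invertible (triangular in degree) system, knowing $A'_0,\dots,A'_t$ already determines $A_n,\dots,A_{n-t}$ uniquely. So I would set up the linear system
\[
A'_k=\sum_{i=0}^{t}C^{(1/r)}_k(n-i)\,A_{n-i}\qquad(0\le k\le t),
\]
with $A'_0=|Y|$ and $A'_k=0$ for $1\le k\le t$, and solve it.

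**Inverting the system.** To solve explicitly, I would use the inverse relation \eqref{eqn:Charlier_inverse}, namely $(x)_j=a^j\sum_{k=0}^{j}\binom{j}{k}C^{(a)}_k(x)$ with $a=1/r$, to convert the statement ``$A'_k=|Y|\delta_{k,0}$ for $0\le k\le t$'' into information about the moments $\sum_i A_{n-i}(n-i)_j$ for $0\le j\le t$. Concretely, $\sum_{i} A_{n-i}\,(n-i)_j = r^{-j}\sum_{k=0}^{j}\binom{j}{k}A'_k = r^{-j}|Y|$ for each $j\le t$. On the other hand, $\xi_j=r^j(\theta)_j$ means $\sum_{x,y\in Y}\xi_j(x^{-1}y)=r^j\sum_i A_{n-i}(n-i)_j$; since this equals $|Y|^2$ for $j\le t$, we recover the same identity, confirming consistency. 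Now I would express $A_{n-i}$ from the ``moment'' data $m_j:=\sum_i A_{n-i}(n-i)_j = |Y|/r^j$ ($0\le j\le t$) together with the support condition. Writing $(n-i)_j$ for the support points and using a standard finite-difference / binomial inversion on falling factorials over the support $\{0,1,\dots,t\}$ (the map $i\mapsto n-i$, $j\mapsto (n-i)_j$), one gets
\[
A_{n-i}=\sum_{j=i}^{t}(-1)^{j-i}\binom{j}{i}\binom{n}{j}\,\frac{m_j-\binom{n}{j}^{-1}\cdot\big(\text{contribution of the trivial part}\big)}{\,\cdots\,},
\]
and after substituting $m_j=|Y|/r^j$ and carefully accounting for the ``$-1$'' coming from the fact that for $j=0$ we have $A_n+A_{n-1}+\dots = |Y|$ while the count without the diagonal-type correction would be off by one per pair, this collapses to the claimed formula $A_{n-i}=\sum_{j=i}^{t}(-1)^{j-i}\binom{j}{i}\binom{n}{j}\big(\frac{|Y|}{r^j(n)_j}-1\big)$. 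The bookkeeping here is that $\sum_i A_{n-i}\binom{n-i}{?}$-type sums must be matched against $\binom{n}{j}$ counts of $j$-tuples, and the ``$-1$'' is exactly $\binom{n}{j}$ times the number of ways a single pair $(x,x)$ fails to be counted — i.e.\ it comes from subtracting the ``ordered $j$-subsets fixed'' overcount.

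**Main obstacle.** The substantive step is the last one: getting the combinatorial identity to come out with precisely the ``$-1$'' inside the parentheses. I expect the cleanest way to nail this is not to invert an abstract linear system but to count directly. For a $t$-design $Y$ of index $c=|Y|/(r^j(n)_j)\cdot$(appropriate factor), the number of pairs $(x,y)\in Y\times Y$ such that $x^{-1}y$ fixes a given ordered $j$-flag is independent of the flag; summing over all $r^j(n)_j$ flags counts pairs weighted by the number of $j$-flags they jointly fix, which is $\binom{\theta(x^{-1}y)}{j}\cdot$(coset factors). Turning this into $\sum_{x,y}\binom{\theta(x^{-1}y)}{j}$ and using $\theta(x^{-1}y)=n-d(x,y)$ gives $\sum_{i}A_{n-i}\binom{n-i}{j}=\binom{n}{j}|Y|/(r^j(n)_j)\cdot r^j j! = \binom{n}{j}\cdot|Y|/(\text{something})$; the diagonal pairs $x=y$ contribute $|Y|\binom{n}{j}$, so the off-diagonal part is $\binom{n}{j}(|Y|/(r^j(n)_j)-1)$ times a normalisation, after which binomial inversion on $\binom{n-i}{j}$ over $i$ (the identity $A_{n-i}=\sum_{j\ge i}(-1)^{j-i}\binom{j}{i}[\text{coeff of }\binom{n-\cdot}{j}]$) yields the stated formula. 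The delicate part, and where I would spend the most care, is getting the index/normalisation constants of the $t$-design right so that the ``$-1$'' appears cleanly; once the counting identity $\sum_i A_{n-i}\binom{n-i}{j}=\binom{n}{j}\big(|Y|/(r^j(n)_j)\big)$ for $0\le j\le t$ and the support condition $A_{n-i}=0$ for $i>t$ are established, the inversion is routine.
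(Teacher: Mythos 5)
Your overall strategy is the same as the paper's: use the Charlier inverse relation \eqref{eqn:Charlier_inverse} to turn the vanishing $A'_1=\cdots=A'_t=0$ (from Proposition~\ref{pro:t-designs_dual_dist}) into factorial-moment identities for $(A_i)$, use the code property to restrict the support, and finish by binomial inversion. However, as written the proposal has a genuine gap, concentrated on the diagonal term $A_0=1$ and on the bookkeeping of indices. First, the consequence of the $(n-t)$-code property is stated backwards: Corollary~\ref{cor:designs_codes_zero_distributions} gives $A_i=0$ for $1\le i\le n-t-1$, so the distance distribution is supported on $\{0\}\cup\{n-t,\dots,n\}$ with $A_0=1$, not on $\{n-t,\dots,n\}$ alone (your claim ``$A_i=0$ for $i>t$'' is false, and it is not equivalent to ``$A_{n-j}=0$ for $j>t$''). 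Dropping $A_0=1$ makes your linear system $A'_k=\sum_{i=0}^t C^{(1/r)}_k(\cdot)A_{n-i}$ and your moment identity $\sum_i A_{n-i}(\cdot)_j=|Y|/r^j$ false as stated; the missing term is exactly what produces the ``$-1$''. Indeed the correct identity is $\sum_{i=0}^{t}\binom{i}{j}A_{n-i}=\binom{n}{j}\bigl(\tfrac{|Y|}{r^j(n)_j}-1\bigr)$, obtained by separating the contribution $A_0\,r^j(n)_j$ of the identity class. Your concluding reduction --- that the identity $\sum_i A_{n-i}\binom{n-i}{j}=\binom{n}{j}\,|Y|/(r^j(n)_j)$ together with ``$A_{n-i}=0$ for $i>t$'' suffices and ``the inversion is routine'' --- is not correct: those two statements are already inconsistent at $j=0$ (the left side is $|Y|-1$, the right side $|Y|$), and taken at face value the inversion yields the formula without the $-1$. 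You do gesture at the right fix (subtracting the diagonal pairs), but you never actually write down the corrected system, and this is precisely the substantive content of the proof.

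There are also persistent index slips that would derail the final inversion: since $A_{n-i}$ counts pairs at distance $n-i$, the corresponding value of $\theta$ is $i$, so the Charlier and falling-factorial arguments must be $i$, not $n-i$; the moment identity reads $\sum_i A_{n-i}(i)_j$, and the inversion to use is the pair $S_j=\sum_i\binom{i}{j}A_{n-i}\iff A_{n-i}=\sum_{j\ge i}(-1)^{j-i}\binom{j}{i}S_j$, as in the paper. The ``inversion on $\binom{n-i}{j}$ over $i$'' with weights $(-1)^{j-i}\binom{j}{i}$ that you invoke is not a valid inversion pair. Once you correct the support to include $A_0=1$, move the identity-class term to the right-hand side, and fix the arguments, your argument becomes exactly the paper's proof (including the direct-counting interpretation of $\sum_{x,y}(\theta(x^{-1}y))_j=|Y|^2/r^j$ for $j\le t$, which is a legitimate alternative route to the moment identities).
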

\begin{proof}
We have
\begin{equation}\label{eqn:dual_dist_in_proof}
A'_k=\sum_{i=0}^nC^{(1/r)}_k(i)A_{n-i}.
\end{equation}
Using binomial inversion \eqref{eqn:binomial_inversion}, we find from the definition of the Charlier polynomials that
\begin{equation}
r^j(x)_j=\sum_{k=0}^j{\binom{j}{k}}C^{(1/r)}_k(x).   \label{eqn:Charlier_inverse}
\end{equation}
Multiply both sides of \eqref{eqn:dual_dist_in_proof} by ${\binom{j}{k}}$, sum over $k$, and use~\eqref{eqn:Charlier_inverse} to find that
\[
\sum_{k=0}^j{\binom{j}{k}} A'_k=\sum_{i=0}^nA_{n-i}\,r^j\,(i)_j.
\]
Since $Y$ is a $t$-design, we find from Proposition~\ref{pro:t-designs_dual_dist} that $A'_1=\cdots=A'_t=0$. So for every $j \in \{1,\ldots,t\}$, we have
\[
	A'_0 = \sum_{i=0}^nA_{n-i}\,r^j\,(i)_j.
\]
Similarly, since $Y$ is an $(n-t)$-code, we have $A_1=\ldots=A_{n-t-1}=0$, and we find
\[
	A'_0 = A_0 r^j (n)_j+ \sum_{i=0}^t A_{n-i}\,r^j\,(i)_j.
\]
for every $j \in \{1,\ldots,t\}$. Moreover, we have $A_0=1$ and $A'_0=\abs{Y}$ and therefore
\[
\abs{Y}-r^j\,(n)_j=\sum_{i=0}^tA_{n-i}\,r^j\,(i)_j
\]
for each $j\in\{1,2,\dots,t\}$. Divide by $j!\,r^j$ to obtain
\[
\sum_{i=0}^t{\binom{i}{j}} A_{n-i}=\frac{\abs{Y}-r^j\,(n)_j}{r^j\,j!} = \frac{(n)_j}{j!}\left(\frac{|Y| - r^j(n)_j}{r^j (n)_j}\right) ={\binom{n}{j}}\bigg(\frac{\abs{Y}}{r^j\,(n)_j}-1\bigg)
\]
for each $j\in\{1,2,\dots,t\}$. Now \eqref{eqn:binomial_inversion_variant} gives the desired result.
\end{proof}
\begin{remark}\label{rem:Poisson_moments}
Note that for $Y=C_r\wr S_n$, we have that $Y$ is an $n$-design and a $1$-code. Moreover, $A_{n-i}$ equals the number of elements in $C_r\wr S_n$ with precisely $i$ fixed points, hence $A_{n-i}=w_i$. In this case, Theorem~\ref{thm:inner_dist} gives the well-known expression
\[
w_i=\frac{n!\,r^{n-i}}{i!}\sum_{j=0}^{n-i}\frac{(-1)^j}{j!\,r^j}
\]
for $i=0,1,\ldots,n-1$ after some elementary calculations. Additionally, we clearly have the value $w_n = 1$.
\end{remark}

%%%%%%%%%%%%%%%%%%%%%%%%%%%%%%%%%%%%%%%%%%%%%%%%%%%%%%%%%%%%%%%%%

\section{Existence results}\label{sec:constructions}

In this section, we give a simple construction of $t$-designs in $C_r\wr S_n$ using $t$-designs in~$S_n$ and orthogonal arrays. This construction is reminiscent of constructions of designs in permutation groups using Gelfand pairs, see~\cite{BanNakOkuZha2022} and~\cite{Ito2004} for general groups,~\cite{MarSag2007} for~$S_n$, and~\cite{ErnSch2022} for~$\GL(n,q)$. For a Gelfand pair $(G,H)$, such constructions use designs in $H$ and $G/H$ to construct designs in~$G$. We also present an application of our results to the well-known prime power conjecture for finite projective planes.

It is well-known that $(C_r\wr S_n,S_n)$ is a Gelfand pair (since $C_r$ is abelian) that has applications to parking functions (see \cite{AkeCan12} for details). It also gives rise to an association scheme. The point set of this scheme is $(C_r\wr S_n)/S_n$, which we identify with $C_r^n$. We do not need to analyse this association scheme further but we remark that orthogonal arrays with $n$ columns and $r$ symbols can be interpreted as subsets of this scheme.

In what follows, we identify the elements of $C_r\wr S_n$ with pairs $(y,z)$ where $y\in C_r^n$ and $z\in S_n$. The following result is immediate.
\begin{proposition}
\label{pro:iterative_construction}
Let $Y$ be a $t$-design in $S_n$ and let~$D$ be an orthogonal array of strength $t$ with $n$ columns and $r$ symbols. Then
\[
\{(g,y) : g\in D,y\in Y\}
\]
is a $t$-design in $C_r\wr S_n$.
\end{proposition}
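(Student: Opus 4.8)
The plan is to count directly. Write $W=C_r\wr S_n=C_r^n\rtimes S_n$ and $X=\{(g,y):g\in D,\ y\in Y\}$, so that $X$ consists precisely of the pairs $(g,\pi)$ with $g\in D$ and $\pi\in Y$, and $|X|=|D|\,|Y|$. Recall that a $t$-design in $C_r\wr S_n$ is a set that is transitive on the collection $\Omega$ of tuples $((c_1,i_1),\dots,(c_t,i_t))$ with the $i_k$ pairwise distinct, and that the action is $((g_1,\dots,g_n),\pi)\cdot(c,i)=(c+g_{\pi(i)},\pi(i))$. I would fix $a=((c_1,i_1),\dots,(c_t,i_t))$ and $b=((d_1,j_1),\dots,(d_t,j_t))$ in $\Omega$ and determine how many elements of $X$ send $a$ to $b$.

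Unwinding the action, $(g,\pi)\in W$ sends $a$ to $b$ if and only if $\pi(i_k)=j_k$ and $g_{j_k}=d_k-c_k$ for every $k\in\{1,\dots,t\}$. The crucial point is that these conditions decouple: the indices $j_1,\dots,j_t$ are the (pairwise distinct) second coordinates of the fixed target $b$, so once we impose $\pi(i_k)=j_k$ the remaining requirement on $g$ is simply that $g$ take the prescribed values $d_k-c_k$ in the fixed positions $j_k$, with no further dependence on $\pi$. Hence the number of elements of $X$ sending $a$ to $b$ is
\[
\bigl|\{\pi\in Y:\pi(i_k)=j_k\text{ for all }k\}\bigr|\cdot\bigl|\{g\in D:g_{j_k}=d_k-c_k\text{ for all }k\}\bigr|.
\]

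Both factors are constants independent of $a$ and $b$. Since $D$ is an orthogonal array of strength $t$ and $j_1,\dots,j_t$ are $t$ distinct positions, the second factor equals $|D|/r^t$. For the first factor, a $t$-design in $S_n$ is by definition a set that is transitive on the ordered $t$-tuples of distinct elements of $[n]$, so $|\{\pi\in Y:\pi(i_k)=j_k\text{ for all }k\}|$ is the common index $c$ of $Y$, namely the number of elements of $Y$ fixing $t$ prescribed points. Therefore every pair $a,b\in\Omega$ is connected by exactly $c\cdot|D|/r^t$ elements of $X$, so $X$ is transitive on $\Omega$, i.e.\ a $t$-design in $C_r\wr S_n$ (of index $c\cdot|D|/r^t$ and of size $|D|\,|Y|$).

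There is no real obstacle here; the argument is a short direct count. The only points needing care are getting the semidirect-product action formula right and noticing that the conditions on the $C_r^n$-part and on the $S_n$-part genuinely separate, which is what lets the single count factor as a product of the orthogonal-array count and the $t$-transitivity count.
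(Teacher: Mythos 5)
Your count is correct and is exactly the argument the paper has in mind: its one-line proof simply asserts that the number of elements $(g,y)$ mapping one admissible $t$-tuple to another is constant, and your computation—splitting the condition into $\pi(i_k)=j_k$ (handled by the $t$-design property of $Y$ in $S_n$) and $g_{j_k}=d_k-c_k$ (handled by the strength-$t$ orthogonal array property of $D$)—is precisely the verification of that claim, including the resulting index $c\cdot|D|/r^t$.
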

\begin{proof}
Note that there are constantly many elements $(g,y)$ which map a $t$-tuple $((c_1,i_1),\ldots,(c_t,i_t))$ with all $i_k$ pairwise distinct to another $t$-tuple of this type.
\end{proof}
The special case that $Y=S_n$ in Proposition~\ref{pro:iterative_construction} was also stated by Ito~\cite[Section~8]{Ito2004} in a more general context.

\begin{example}
If $r$ is a prime power and $r\ge n+1$, then it is well-known that there are orthogonal arrays of strength $t$ in $H(n,r)$ of minimum size $r^t$. These objects are equivalent to MDS codes in $H(n,r)$ of minimum distance $n-t+1$. Hence, if $r\ge n+1$ and there exists a $t$-design in $S_n$ of index $1$ (namely a sharply $t$-transitive set in $S_n$), then by Proposition~\ref{pro:iterative_construction} there exists also a $t$-design of index $1$ in $C_r\wr S_n$. For example, if $n$ is a prime power, then $\AGL(1,n)$ inside $S_n$ is a $2$-design of index $1$ and hence there exist $2$-designs of index 1 in $C_r\wr S_n$ whenever $r$ and $n$ are prime powers with $r\ge n+1$.
\end{example}

Not every $t$-design in $C_r \wr S_n$ comes from the construction of Proposition~\ref{pro:iterative_construction}. For example, there is a $2$-design of index $1$ in $C_2 \wr S_5$ consisting of $80$ elements (see appendix). If this example came from Proposition~\ref{pro:iterative_construction}, then it would have to come from a sharply $2$-transitive set in $S_5$ and an orthogonal array~$A$ of strength $2$ and index $1$. However, it is easy to see that such an orthogonal array does not exist.

Now note that by Corollary~\ref{cor:bound_codes}, each $t$-design $Y$ in $C_r\wr S_n$ must satisfy $\abs{Y}\ge r^t(n)_t$. Using the results of \cite{KupLovPel2017}, we now show that there are always $t$-designs in $C_r\wr S_n$ almost as small as this lower bound.

\begin{corollary}
\label{cor:existence_designs}
Let $r,n,t$ be integers satisfying $r\ge 2$, $n\ge 1$, and $1\le t\le n$. Then there exists a $t$-design $Y$ in $C_r\wr S_n$ satisfying $\abs{Y}\le (crn)^{ct}$ for some universal constant $c>0$.
\end{corollary}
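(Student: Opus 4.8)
The plan is to deduce the statement from the iterative construction of Proposition~\ref{pro:iterative_construction}, which turns a $t$-design in $S_n$ together with an orthogonal array of strength $t$ in the Hamming scheme $H(n,r)$ into a $t$-design in $C_r\wr S_n$ whose size is the product of the sizes of the two ingredients. So the whole task reduces to finding small instances of these two objects and then multiplying the bounds.

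First I would invoke \cite{KupLovPel2017} to obtain a $t$-transitive subset $Z$ of $S_n$ (equivalently, a $t$-design in $S_n$ in the sense of this section, cf.\ Proposition~\ref{pro:iterative_construction}) with $|Z|\le (c_1 n)^{c_1 t}$ for some universal constant $c_1>0$; this is the bound $\binom{n}{t}^{O(1)}=n^{O(t)}$ for small transitive sets in the symmetric group. Then, again using the results of \cite{KupLovPel2017}, I would obtain an orthogonal array $D$ of strength $t$ in $H(n,r)$ with $|D|\le (c_2 rn)^{c_2 t}$ for some universal constant $c_2>0$. Feeding $D$ and $Z$ into Proposition~\ref{pro:iterative_construction} produces a $t$-design $Y$ in $C_r\wr S_n$ with $|Y|=|D|\cdot|Z|\le (c_1 n)^{c_1 t}(c_2 rn)^{c_2 t}$. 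Finally I would absorb constants: a bounded product of factors of the shape $(a\,rn)^{bt}$ with $a,b$ bounded is again of the shape $(crn)^{ct}$ for a suitably large universal $c>0$, which gives the claimed bound. By Corollary~\ref{cor:bound_codes} this $Y$ is almost extremal, its size being within a factor whose exponent is $O(t)$ of the lower bound $r^t(n)_t$.

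The step I expect to be the real obstacle is producing the orthogonal array $D$, i.e.\ a strength-$t$ orthogonal array over an alphabet of size $r$ with $n$ columns whose size is polynomial in $rn$ with exponent linear in $t$. For prime powers $r$ this is routine — for instance one may take the dual of a BCH code over $\mathbb{F}_r$, which has strength $t$ and size $n^{O(t)}$ — but for general $r$ the obvious reduction, writing $r=\prod_i q_i$ as a product of prime powers and taking the direct product of strength-$t$ orthogonal arrays over the fields $\mathbb{F}_{q_i}$, does not give a good enough bound: by the Rao bound each factor already has size at least $(nq_i)^{\Omega(t)}$, so the product has size at least $n^{\Omega(\omega(r)\,t)}$, where $\omega(r)$ is the number of distinct prime divisors of $r$, and this is no longer of the form $(crn)^{ct}$ once $r$ has many prime divisors. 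It is precisely here that the non-elementary existence results of \cite{KupLovPel2017} are needed; if one only wants index-$1$ designs, the example following Proposition~\ref{pro:iterative_construction} already settles the prime-power case $r\ge n+1$.
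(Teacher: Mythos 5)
Your proposal is correct and follows essentially the same route as the paper: it invokes the results of \cite{KupLovPel2017} to obtain a small $t$-design in $S_n$ and a small strength-$t$ orthogonal array in $H(n,r)$, and then multiplies them via Proposition~\ref{pro:iterative_construction}, absorbing constants to reach the stated bound. The extra discussion about why the orthogonal-array input is nontrivial for composite $r$ is reasonable commentary but not needed, since the cited existence theorem already covers general alphabet sizes.
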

\begin{proof}
From~\cite[Theorems 1.1 and 1.6]{KupLovPel2017}, we find that there exists an orthogonal array~$D$ of strength~$t$ in $H(n,r)$ satisfying $\abs{D}\le \left(\frac{c_1rn}{t}\right)^{c_1t}$ and a $t$-design~$Y$ in $S_n$ satisfying $\abs{Y}\le (c_2n)^{c_2t}$ for some universal constants $c_1,c_2>0$. The statement of the corollary then follows from Proposition~\ref{pro:iterative_construction}.
\end{proof}

Note that Corollary~\ref{cor:existence_designs} also gives the existence of more general designs in $C_r\wr S_n$, since in view of Theorem~\ref{thm:designs_parabolic}, every $t$-design is also a $(\sigma,\rho)$-design, where $(\sigma,\rho)$ is a double partition of $n$ such that $\rho_1\ge n-t$.

\section{Application to projective planes}\label{sec:further research}

In this section, we explore a connection to projective planes. For an introduction to projective planes, we refer the reader to \cite{HugPip1973}. We make use of two well-known results on the existence of projective planes.

\begin{theorem}[Bose \cite{Bos1938}]\label{thm:nonexistence bose}
There exists a projective plane of order $n$ if and only if there exists a complete set of $n-1$ mutually orthogonal Latin squares of order $n$.
\end{theorem}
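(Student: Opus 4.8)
The plan is to prove both implications through the classical dictionary between projective planes of order $n$, affine planes of order $n$, and complete sets of $n-1$ mutually orthogonal Latin squares (MOLS) of order $n$. I would route everything through affine planes, since the passage between a projective plane and an affine plane — delete a line together with all its points; conversely adjoin a line at infinity carrying one new point for each parallel class — is entirely routine, so the real content lies in the affine plane $\leftrightarrow$ MOLS step. Throughout I assume $n\ge 2$, as is standard for the order of a projective plane.

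\textbf{Affine plane $\Rightarrow$ MOLS.} Starting from an affine plane $\mathcal A$ of order $n$, I would first recall that its lines fall into exactly $n+1$ parallel classes, each a partition of the $n^2$ points into $n$ lines. Single out two classes as ``rows'' and ``columns'' and coordinatise the points by pairs $(x,y)$ over an $n$-set $N$. For each of the remaining $n-1$ classes $\mathcal C_k$, label its lines by $N$ and set $L_k(x,y)$ to be the label of the line of $\mathcal C_k$ through $(x,y)$. That each $L_k$ is a Latin square is immediate, since a line of $\mathcal C_k$ meets each row, resp.\ column, line in exactly one point; and $L_k,L_{k'}$ are orthogonal for $k\ne k'$ because the line of $\mathcal C_k$ with label $a$ and the line of $\mathcal C_{k'}$ with label $b$ lie in different parallel classes, hence meet in a unique point, which is then the unique cell where the pair $(a,b)$ occurs.

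\textbf{MOLS $\Rightarrow$ affine plane.} Conversely, given MOLS $L_1,\dots,L_{n-1}$ on symbol set $N$, I would take $N\times N$ as the point set and declare $n^2+n$ lines: the $n$ lines $\{y=c\}$, the $n$ lines $\{x=c\}$, and for each $k$ and each $c\in N$ the line $\{(x,y):L_k(x,y)=c\}$. Every line has $n$ points and every point lies on exactly $n+1$ lines. The key step is that two distinct points lie on exactly one common line. I would get ``at least one'' for free from the counting identity $(n^2+n)\binom{n}{2}=\binom{n^2}{2}$, so it suffices to rule out two points lying on two common lines. The mixed cases (a horizontal and a vertical line; a horizontal and an $L_k$-line; and so on) are immediate from the Latin property, and the remaining case — two points on an $L_k$-line and on an $L_{k'}$-line with $k\ne k'$ — is exactly ruled out by orthogonality of $L_k$ and $L_{k'}$. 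The $n+1$ parallel classes (horizontals, verticals, and the $L_k$-lines for each fixed $k$) each partition the points, giving the parallel axiom; non-degeneracy for $n\ge 2$ is clear. Hence $\mathcal A$ is an affine plane of order $n$, which is then completed to a projective plane of order $n$.

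\textbf{Main obstacle.} I expect the only genuinely load-bearing point to be the precise role of orthogonality: it is orthogonality, and nothing weaker than ``$n-1$ pairwise orthogonal squares'', that simultaneously forces lines in different classes to meet exactly once (forward direction) and forbids two points from sitting on two distinct $L_k$-lines (backward direction), which is what upgrades the incidence structure on $n^2+n$ lines from a partial linear space to a genuine plane. The coordinatisation, the incidence counts, and the affine-to-projective completion are all bookkeeping that I would carry out carefully but anticipate no trouble with.
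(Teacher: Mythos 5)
Your argument is the standard (and correct) classical proof via the affine-plane/MOLS dictionary; the paper itself gives no proof of this statement, citing Bose, so your route is exactly the textbook one the citation points to. One small reading: the identity $(n^2+n)\binom{n}{2}=\binom{n^2}{2}$ does not by itself yield ``at least one common line'' --- rather, once your case analysis shows every pair of points lies on \emph{at most} one line, the counting forces \emph{exactly} one, which is how your argument in fact proceeds.
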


It is well-known that there exists a complete set of mutually orthogonal Latin squares of order $n$ if and only if there exists an orthogonal array of strength~$2$ with $n+1$ columns, $n$ symbols and index $1$ (see for example~\cite[Theorem~6.38]{Sti04}). Thus, the existence of a projective plane of order $n$ is equivalent to the existence of a $2$-design in the Hamming scheme $H(n+1,n)$ of index $1$.

It is also well-known that the existence of a finite projective plane of order $n$ is connected to the existence of a sharply $2$-transitive set in $S_n$, so a $2$-design in $S_n$ of index $1$.

\begin{theorem}[Hall \cite{Hal1943}]\label{thm:nonexistence transitive}
There exists a projective plane of order $n$ if and only if there exists a sharply $2$-transitive subset of $S_n$. 
\end{theorem}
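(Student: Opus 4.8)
The plan is to set up an explicit dictionary between sharply $2$-transitive subsets of $S_n$ and affine planes of order $n$, and then to combine it with the classical passage between affine and projective planes of order $n$ (delete a line, respectively adjoin a line at infinity), which together with Theorem~\ref{thm:nonexistence bose} closes the circle. Throughout I would assume $n\ge 2$, the case $n=1$ being degenerate.

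For the first direction I would start from a sharply $2$-transitive set $Y\subseteq S_n$ and build an incidence structure $\mathcal A$ on the point set $[n]\times[n]$ whose lines are the \emph{verticals} $\{c\}\times[n]$ for $c\in[n]$, the \emph{horizontals} $[n]\times\{c\}$ for $c\in[n]$, and the \emph{graphs} $\{(i,g(i)):i\in[n]\}$ for $g\in Y$. The key claim is that any two distinct points $(a_1,b_1)\ne(a_2,b_2)$ lie on exactly one line, which I would verify by the evident case distinction: if $a_1=a_2$, the unique line is a vertical (no graph contains both points, since each $g$ is a function, and no horizontal does, since $b_1\ne b_2$); if $b_1=b_2$, it is a horizontal (no graph contains both, by injectivity of $g$, and no vertical, since $a_1\ne a_2$); and if $a_1\ne a_2$ and $b_1\ne b_2$, no vertical or horizontal works, while the graphs through both points are in bijection with the $g\in Y$ satisfying $g(a_1)=b_1$ and $g(a_2)=b_2$, of which there is exactly one by sharp $2$-transitivity. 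Since $\mathcal A$ then has $n^2$ points, all blocks of size $n$, and every pair of points on a unique block, it is a $2$-$(n^2,n,1)$ design, that is, an affine plane of order $n$ (see \cite{HugPip1973}).

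For the converse I would start from an affine plane $\mathcal A$ of order $n$, pick two of its $n+1$ parallel classes to play the roles of verticals and horizontals, and use them to coordinatise the $n^2$ points as $[n]\times[n]$. A line $\ell$ lying in neither of the two chosen classes is parallel to no vertical and to no horizontal, hence meets every vertical and every horizontal; since $\ell$ has exactly $n$ points and the verticals (respectively horizontals) partition the points into $n$ classes of size $n$, it meets each of them in exactly one point, so $\ell$ is the graph of a permutation $\pi_\ell\in S_n$. Distinct lines outside the two chosen classes give distinct permutations (two parallel such lines give permutations agreeing nowhere, two non-parallel ones meet in a single point and so give permutations agreeing exactly once), hence $Y=\{\pi_\ell\}$ has $n(n-1)$ elements; and for distinct $a_1,a_2$ and distinct $b_1,b_2$ the unique line of $\mathcal A$ through $(a_1,b_1)$ and $(a_2,b_2)$ is neither vertical nor horizontal, hence of the form $\ell$, giving the unique element $\pi_\ell\in Y$ with $\pi_\ell(a_1)=b_1$ and $\pi_\ell(a_2)=b_2$. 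Thus $Y$ is sharply $2$-transitive.

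Combining the two constructions with the standard correspondence between affine planes of order $n$ and projective planes of order $n$, and with Theorem~\ref{thm:nonexistence bose} (equivalently, the remark identifying complete sets of mutually orthogonal latin squares with orthogonal arrays of strength $2$ and $n+1$ columns over $n$ symbols of index $1$), completes the argument. I expect the only point needing real care to be the coordinatisation step in the converse: one must be sure that a line outside the two distinguished parallel classes restricts to a genuine bijection of $[n]$, which is exactly where the parallel-class structure and the block size being precisely $n$ are both used; the remaining work is bookkeeping, together with a brief remark settling the degenerate values of $n$.
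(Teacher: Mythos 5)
Your argument is correct and complete: the dictionary between sharply $2$-transitive subsets of $S_n$ and affine planes of order $n$ (verticals, horizontals, and graphs of the permutations as lines, and conversely coordinatising by two parallel classes), combined with the standard affine--projective passage, is exactly the classical proof of Hall's theorem. The paper itself does not prove this statement but cites it from Hall \cite{Hal1943}, so there is nothing to compare beyond noting that your route is the standard one; it could equally well have been phrased through complete sets of mutually orthogonal latin squares and Theorem~\ref{thm:nonexistence bose}, as you remark.
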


Using Proposition~\ref{pro:iterative_construction}, we can construct a $2$-design in $C_r \wr S_n$ from a $2$-design in $S_n$ and an orthogonal array of strength $2$. This gives the following theorem.

\begin{theorem}\label{thm:nonexistence projective planes}
Let $n$ be a positive integer. If there is no $2$-design of index $1$ in $C_{n-1} \wr S_{n}$, then there is no projective plane of order $n-1$ or there is no projective plane of order $n$.
\end{theorem}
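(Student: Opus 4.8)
The plan is to prove the contrapositive: if there exists a projective plane of order $n-1$ \emph{and} a projective plane of order $n$, then $C_{n-1}\wr S_n$ contains a $2$-design of index $1$.

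First I would translate each hypothesis into a combinatorial object via the two classical theorems quoted above. By Theorem~\ref{thm:nonexistence transitive} (Hall), a projective plane of order $n$ gives a sharply $2$-transitive subset $Y$ of $S_n$, that is, a $2$-design of index $1$ in $S_n$; in particular $\abs{Y}=(n)_2$. By Theorem~\ref{thm:nonexistence bose} (Bose), together with the standard equivalence between complete sets of mutually orthogonal latin squares and orthogonal arrays (as recalled in the text preceding this theorem), a projective plane of order $n-1$ gives an orthogonal array $D$ of strength $2$ in $H(n,n-1)$ of index $1$, so $\abs{D}=(n-1)^2$.

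Next I would feed $Y$ and $D$ into Proposition~\ref{pro:iterative_construction} with $r=n-1$ and $t=2$. This yields that $Z=\{(g,y):g\in D,\ y\in Y\}$ is a $2$-design in $C_{n-1}\wr S_n$, of size $\abs{Z}=\abs{D}\cdot\abs{Y}=(n-1)^2(n)_2$. It then remains to check that the index of $Z$ equals $1$. Since $Z$ is transitive on the set $\Omega$ of ordered pairs $\big((c_1,i_1),(c_2,i_2)\big)$ with $i_1\ne i_2$ and $c_1,c_2\in C_{n-1}$, and $\abs{\Omega}=(n-1)^2(n)_2$, the index is $\abs{Z}/\abs{\Omega}=1$. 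Equivalently, one may invoke the lower bound $\abs{Z}\ge r^t(n)_t=(n-1)^2(n)_2$ from Corollary~\ref{cor:bound_codes}, which $Z$ meets with equality, so $Z$ is sharply $2$-transitive. Hence $Z$ is a $2$-design of index $1$ in $C_{n-1}\wr S_n$, contradicting the hypothesis, and the theorem follows.

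The argument is essentially bookkeeping once Proposition~\ref{pro:iterative_construction} is available; the only place requiring a little care is the index computation, i.e.\ confirming that the ``product'' of two index-$1$ objects is again index $1$. I expect this to reduce either to the size count $\abs{\Omega}=(n-1)^2(n)_2$ above or, cleanly, to the sharpness clause of Corollary~\ref{cor:bound_codes}.
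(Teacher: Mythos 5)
Your proposal is correct and follows essentially the same route as the paper: pass to the contrapositive, use Hall's theorem to get a sharply $2$-transitive subset of $S_n$ and Bose's theorem (via the MOLS/orthogonal-array equivalence) to get an index-$1$ strength-$2$ orthogonal array in $H(n,n-1)$, then combine them with Proposition~\ref{pro:iterative_construction}. Your explicit verification that the product has index $1$ (by the size count $\abs{Z}=(n-1)^2(n)_2=\abs{\Omega}$, or via the sharpness clause of Theorem~\ref{thm:clique_design_bound}) is a detail the paper leaves implicit, and it is correct.
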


\begin{proof}
We show that the existence of a projective plane of order $n-1$ and order $n$ implies the existence of a $2$-design of index $1$ in $C_{n-1} \wr S_n$.

A projective plane of order $n$ gives rise to a $2$-design in $S_n$ of index $1$. A projective plane of order $n-1$ gives rise to an orthogonal array of strength $2$ with $n$ columns and $n-1$ symbols of index $1$. Now Proposition~\ref{pro:iterative_construction} gives a $2$-design in $C_{n-1} \wr S_n$ of index $1$.
\end{proof}

If $n$ is a prime power, then there exists a projective plane of order or $n$. In this case, the non-existence of a $2$-design in $C_{n-1} \wr S_n$ implies the non-existence of a projective plane of order $n-1$. Similarly, if $n-1$ is a prime power, then the non-existence of a $2$-design in $C_{n-1} \wr S_n$ implies the non-existence of a projective plane of order $n$. We summarise our findings in the following corollary.

\begin{corollary}\label{cor:fpp}
Let $q$ be a prime power. If there is no $2$-design of index~$1$ in $C_{q-1} \wr S_q$, then there is no finite projective plane of order~$q-1$. If there is no $2$-design of index~$1$ in $C_{q} \wr S_{q+1}$, then there is no finite projective plane of order~$q+1$. 
\end{corollary}

Note that it is possible that there exist $2$-designs of index $1$ in $C_{n-1} \wr S_n$ that do not come from the construction of Proposition~\ref{pro:iterative_construction}. In this case, Theorem~\ref{thm:nonexistence projective planes} cannot rule out the existence of a finite projective plane. More detailed analysis of the structure of transitive sets in $C_{n-1} \wr S_n$ is needed to explore this connection further.

%%%%%%%%%%%%%%%%%%%%%%%%%%%%%%%%%%%%%%%%%%%%%%%%%%%%%%%%%%%%%%%%%

\section*{Open Problems}

We list some open problems for further research that the reader might be interested to work on.
\begin{enumerate}[label=\textup{(}\arabic*\textup{)}]
    \item{
        Prove Conjecture~\ref{conj:small steps} or Conjecture~\ref{conj:equivalent conditions} characterising the relation $\succeq$.
    }
    \item{
        Find a geometric interpretation for $\usigma$-transitivity in the group $G \wr S_n$ where $G$ is not cyclic.
    }
    \item{
        Can Corollary~\ref{cor:fpp} be used to show the non-existence of a finite projective plane? In order to rule out the case $n=12$, one would have to work in the group $C_{11} \wr S_{12}$ or $C_{12} \wr S_{13}$.
    }
\end{enumerate}

%%%%%%%%%%%%%%%%%%%%%%%%%%%%%%%%%%%%%%%%%%%%%%%%%%%%%%%%%%%%%%%%%

\section*{Acknowledgements}

The first author was partially funded by the Deutsche Forschungsgemeinschaft (DFG, German Research Foundation) -- Project-ID 491392403 -- TRR 358. He would like to thank Barbara Baumeister, Alena Ernst, Karen Meagher and Charlene Weiß for helpful discussions and proofreading of this work. Sadly, the second author passed away during the writing process. The first author is grateful to him for the supervision and the fruitful collaboration on this article.

\bibliographystyle{amsplain}
\bibliography{references}

\section*{Appendix}

Here, we give an explicit example of a $2$-design $Y$ of index $1$ (a sharply $2$-transitive set) in the group $C_2 \wr S_5$. It is neither a subgroup nor a coset of a subgroup. In fact, the group generated by $Y$ is the full group $C_2 \wr S_5$. We write the elements that the group acts on as $1,\,\overline{1},\,2,\,\overline{2},\,3,\,\overline{3},\,4,\,\overline{4},\,5,\,\overline{5}$.

We can write our set $Y$ as $Y=RH$ where $R$ is a set of $20$ elements and $H$ is a set of $4$ elements. For better readability, we give the generalised permutations in one line notation. That is, we write down the sequence $\big(\sigma(1),\sigma(2),\sigma(3),\sigma(4),\sigma(5)\big)$ for the permutation $\sigma$. From the definition of the hyperoctahedral group, it follows that if $\sigma(x) = y$, then $\sigma(\overline{x}) = \overline{y}$, so the one line notation contains all information about the generalised permutation.
\begin{align*}
H=&\{1\,2\,3\,4\,5,\;1\,\overline{2}\,\overline{3}\,\overline{4}\,\overline{5},\;1\,3\,2\,5\,4,\;1\,\overline{3}\,\overline{2}\,\overline{5}\,\overline{4}\,\}\\
R=&\{1\,2\,3\,4\,5,\;1\,4\,5\,2\,3,\;\overline{1}\,2\,\overline{3}\,5\,\overline{4},\;\overline{1}\,4\,\overline{5}\,3\,\overline{2},\\
	&\phantom{\lbrace}2\,1\,4\,\overline{3}\,5,\;2\,3\,5\,1\,\overline{4},\;\overline{2}\,1\,\overline{4}\,5\,3,\;\overline{2}\,3\,\overline{5}\,\overline{4}\,\overline{1},\\
	&\phantom{\lbrace}3\,1\,5\,\overline{2}\,4,\;3\,2\,4\,1\,\overline{5},\;\overline{3}\,1\,\overline{5}\,4\,2,\;\overline{3}\,2\,\overline{4}\,\overline{5}\,\overline{1},\\
	&\phantom{\lbrace}4\,1\,3\,2\,\overline{5},\;4\,2\,5\,\overline{1}\,\overline{3},\;\overline{4}\,1\,\overline{3}\,\overline{5}\,\overline{2},\;\overline{4}\,2\,\overline{5}\,\overline{3}\,1,\\
	&\phantom{\lbrace}5\,1\,2\,3\,\overline{4},\;5\,3\,4\,\overline{1}\,\overline{2},\;\overline{5}\,1\,\overline{2}\,\overline{4}\,\overline{3},\;\overline{5}\,3\,\overline{4}\,\overline{2}\,1\,\}
\end{align*}

This transitive set cannot come from the construction of Proposition~\ref{pro:iterative_construction} because there is no orthogonal array of strength $2$ over the alphabet $\{0,1\}$ with $5$ columns and index $1$.

\end{document}